\documentclass{amsart}
\usepackage{amssymb, amsthm}
\usepackage{mathtools}
\usepackage{hyperref}
\allowdisplaybreaks

\newcommand{\R}{{\mathbb{R}}}

\newcommand{\N}{{\mathbb{N}}}

\newcommand{\qtq}[1]{\quad\text{#1}\quad}
\newcommand{\eps}{\varepsilon}

\newtheorem{theorem}{Theorem}[section]
\newtheorem{lemma}[theorem]{Lemma}
\newtheorem{proposition}[theorem]{Proposition}
\newtheorem{corollary}[theorem]{Corollary}
\theoremstyle{definition}
\newtheorem{definition}[theorem]{Definition}

\numberwithin{equation}{section}

\renewcommand{\H}{\mathcal H}
\newcommand{\la}{\langle}
\newcommand{\ra}{\rangle}

\title[The Kirchhoff--Pohozaev wave equation]{Existence and equicontinuity of solutions\\to the Kirchhoff--Pohozaev wave equation}
\author[L. Campos]{Luccas Campos}
\address{Department of Mathematics, Universidade Federal de Minas Gerais, Brazil}
\email{luccas@mat.ufmg.br}

\author[R. Killip]{Rowan Killip}
\address{CEREMADE, CNRS, Universit\'e Paris Dauphine\textbf{--}PSL, 75016 Paris, France}
\email{killip@ceremade.dauphine.fr}

\author[M. Visan]{Monica Vi\c{s}an}
\address{Institute of Science and Technology, Am Campus 1, 3400 Klosterneuburg, Austria}
\email{Monica.Visan@ist.ac.at}

\begin{document}

\begin{abstract}
It was recently discovered by Boiti--Manfrin that a variant of the Kirchhoff wave equation introduced by Pohozaev admits infinitely many conserved quantities.
In this paper, we obtain explicit formulae for such conserved quantities, as well as introduce a generating function for them that is coercive.  These tools are then employed to  establish a priori bounds, the propagation of equicontinuity, global well-posedness in $\mathcal H^s$ for $s\geq \frac32$, and the existence of global $C_t \mathcal H^1$ solutions.  
\end{abstract}

\maketitle
\section{Introduction}

Waves on a stretched string can be modeled by the linear wave equation with wave speed $c$ given by $c^2=\tau/\rho$, where $\tau$ is the tension of the string and $\rho$ its mass density.  In his lectures \cite{Kirchhoff97} on mathematical physics, Kirchhoff took the analysis further by incorporating the following nonlinear effect: when the string is displaced from equilibrium, it is stretched further and this increases the tension.

In non-dimensionalized form, Kirchhoff's dynamical equation reads
\begin{equation}\label{E:Kirchhoff}
\partial_{t}^2 u(t,x) - \biggl(1+ \int_U |\nabla u(t,x')|^2\, dx'\biggr)  \Delta u(t,x)=0,
\end{equation}
where $\Delta$ denotes the Dirichlet Laplacian on the interval $U=[0,\ell]$ and $\nabla$ denotes the (single) spatial derivative.
With very mild physical assumptions/approximations, Kirchhoff has provided us with a very elegant dynamical equation that preserves the Hamiltonian character of wave motion.  Despite concerted effort over many years, the foundational question of the well-posedness of \eqref{E:Kirchhoff} in the energy space remains open!

Early investigations of \eqref{E:Kirchhoff} include \cite{Bernstein,Dickey,Lions,Pohozaev75}, where the model was also generalized in a number of ways.  First, vibrating membranes (and higher-dimensional analogues) were considered by allowing general domains $U\subseteq\R^d$; this is the reason why we adopted dimension-independent notations in \eqref{E:Kirchhoff}.  Further, general tension functions were considered and it was realized that much could be achieved without the direct link to wave equations: the operator $-\Delta$ could be replaced by a general positive definite operator $A$ on an abstract Hilbert space $X$.  In this way, the study of \eqref{E:Kirchhoff} has grown into the consideration of the family of systems
\begin{align}\label{GenK_ODE}
\tfrac{d}{dt}q  &= p, \qquad \tfrac{d}{dt}p = - M'\bigl(\langle q,A q\rangle\bigr) A q ,
\end{align}
where the function $M:[0,\infty)\to [0,\infty)$ is increasing.  Such systems admit the Hamiltonian
\begin{align}\label{GenK_H}
H = \tfrac12 \langle p, p\rangle + \tfrac12 M\bigl(\langle q,A q\rangle\bigr)
\end{align}
with respect to the standard Poisson bracket
\begin{align}\label{Poisson}
\{F, G\} = \bigl\langle \tfrac{\partial F}{\partial q},\tfrac{\partial G}{\partial p} \bigr\rangle -\bigl\langle \tfrac{\partial F}{\partial p},\tfrac{\partial G}{\partial q} \bigr\rangle.
\end{align}

In the context of the wave equation \eqref{E:Kirchhoff}, we have $A=-\Delta$, $q=u$, $p=\partial_t u$, and $M(s) =s+\frac12 s^2$. When working with the wave equation in the whole space $\R^d$, the equation \eqref{GenK_ODE} admits the scaling symmetry
\begin{equation}\label{scaling}
q(t,x)\mapsto\mu^{(d-2)/2}q(\mu t,\mu x) \qtq{and} p(t,x)\mapsto\mu^{d/2}p(\mu t,\mu x).
\end{equation}
Notably, this scaling also preserves the Hamiltonian \eqref{GenK_H}, irrespective of the choice of $M$.  Indeed, the same phenomenology can be observed whenever $A$ admits a scaling symmetry. Correspondingly, we call the whole family of models \emph{energy critical}.

In \cite{Pohozaev85}, Pohozaev identified a new conservation law, besides the Hamiltonian, when $M'(s)$ takes the form $(c_1 s +c_2)^{-2}$.  Using that $M'\geq 0$ and rescaling, one may reduce attention to the two cases
\begin{align}\label{KP_pm}
\tfrac{d}{dt}q  &= p, \qquad \tfrac{d}{dt}p = -c_\pm^2(q) A q \qtq{with} c_\pm = (1 \pm \langle q,A q\rangle\bigr)^{-1} .
\end{align}
The corresponding Hamiltonians are
\begin{equation}\label{Hamiltonian}
H_\pm(q,p) = \tfrac12 \Bigl[\langle p, p\rangle + \tfrac{\langle q, A q\rangle}{1 \pm \langle q, A q\rangle}\Bigr],
\end{equation}
and Pohozaev's new conservation law takes the form
\begin{align}\label{I Poho}
I_1^\pm(q,p) :=  \langle p, A p\rangle + \tfrac{\langle q, A^{2} q\rangle}{1 \pm \langle q, A q\rangle}
	\pm \Bigl[ \langle q, A q\rangle \langle p, A p\rangle - \langle p, A q\rangle^2 \Bigr].
\end{align}

The model generated by $H_-(q,p)$ is only studied in the regime $\langle q, A q\rangle < 1$. Due to the divergence of the Hamiltonian, larger values of $\la q, Aq\ra$ are inaccessible via finite-energy excitations of the equilibrium configuration $p=q=0$.

Our own interest in the Kirchhoff--Pohozaev models \eqref{KP_pm} was stimulated by the recent paper \cite{BM26} where, building on \cite{BM25}, it was shown that these models actually admit \emph{infinitely many} conservation laws.  This prompted us to consider what might be achieved toward the well-posedness of the models \eqref{KP_pm} by deploying recent approaches to the study of integrable systems. 

To be considered completely integrable, it is not merely enough that a model has sufficiently many conserved quantities; these observables must Poisson commute!\footnote{We would also like to draw the reader's attention to the very recent paper \cite{HM} that seeks to understand the complete integrability of the Kirchhoff--Pohozaev models through the lens of Birkhoff normal form transformations.}  Before addressing this question, we first need to find explicit expressions for the conserved quantities.  We summarize these achievements as follows:

\begin{theorem}\label{T:1.1}
For each model \eqref{KP_pm}, smooth solutions conserve
\begin{equation}\label{E:Iz combined}
\begin{aligned}
I_\pm(z;q,p)&=\langle p,\tfrac{1}{1+zA}p\rangle+c_\pm(q)\langle q,\tfrac{A}{1+zA}q\rangle\\
&\qquad \mp z\bigl[\langle q,\tfrac{A}{1+ zA}q\rangle\langle p,\tfrac{A}{1+ zA}p\rangle-\langle p,\tfrac{A}{1+ zA}q\rangle^2\bigr].
\end{aligned}
\end{equation}
Moreover, $I_\pm(z)$ and $I_\pm(w)$ Poisson commute for all $z,w\geq 0$.
\end{theorem}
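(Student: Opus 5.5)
The plan is to reduce both assertions to a single Poisson bracket computation. Conservation of $I_\pm(z)$ means $\{I_\pm(z),H_\pm\}=0$. Since
\[
\lim_{w\to\infty} 2w\bigl[H_\pm - \tfrac12(\text{const})\bigr]
\]
does not directly produce $H_\pm$, I would instead verify the two claims separately, using the same algebraic structure.

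Introduce the shorthand $R_z=(1+zA)^{-1}$ and the scalar quantities
\[
a=\langle q,Aq\rangle,\qquad a_z=\langle q,AR_zq\rangle,\qquad b_z=\langle p,AR_zp\rangle,\qquad m_z=\langle p,AR_zq\rangle,\qquad n_z=\langle p,R_zp\rangle .
\]
The Poisson brackets among these quantities close under the resolvent identity
\[
AR_zR_w=\frac{AR_w-AR_z}{z-w},
\]
together with $R_z = 1 - zAR_z$. For example, $\{a_z,b_w\}$ is a multiple of $\langle q,A^2R_zR_wp\rangle$, which is a combination of $m_z$, $m_w$ and $\langle q,A p\rangle$. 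Here $\langle p,Aq\rangle$ is the $z=0$ member of the family.

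\textbf{Step 1: conservation.} Along \eqref{KP_pm} we have $\dot q=p$ and $\dot p=-c_\pm^2Aq$, and $\dot a=2\langle p,Aq\rangle$, so $\dot c_\pm=\mp 2c_\pm^2\langle p,Aq\rangle$. Differentiating each term of $I_\pm(z)$ gives:
\begin{itemize}
\item $\tfrac{d}{dt} n_z = -2c_\pm^2 m_z$, and the $\tfrac{d}{dt}a_z$ term contributes $+2c_\pm m_z$;
\item $\dot c_\pm\, a_z=\mp 2c_\pm^2 m_0 a_z$;
\item $\tfrac{d}{dt}(a_zb_z-m_z^2)=2m_z b_z-2c_\pm^2 a_z\langle Aq,AR_zp\rangle-2m_z\bigl(b_z-c_\pm^2\langle Aq,AR_zq\rangle\bigr)$.
\end{itemize}
Now use $\langle Aq,AR_zq\rangle=(a-a_z)/z$ and $\langle Aq,AR_zp\rangle=(m_0-m_z)/z$. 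The $z$ factor cancels, and the derivative of the bracket becomes
\[
\frac{2c_\pm^2}{z}\bigl[m_z a - m_0 a_z\bigr].
\]
Multiplying by $\mp z$ and adding the other terms leaves
\[
-2c_\pm^2m_z+2c_\pm m_z\mp 2c_\pm^2 m_z a .
\]
This vanishes because $c_\pm(1\pm a)=1$. Note that the $m_0a_z$ terms cancel against the $\dot c_\pm$ term. This step is routine, and I would present it in this compressed form.

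\textbf{Step 2: involution.} For $\{I_\pm(z),I_\pm(w)\}=0$, I would compute the gradients
\[
\partial_p I=2R_zp\mp 2z\bigl(a_zAR_zp-m_zAR_zq\bigr),\qquad \partial_q I=2c_\pm AR_zq\mp 2c_\pm^2 a_z Aq\mp 2z\bigl(b_zAR_zq-m_zAR_zp\bigr),
\]
and expand the bracket \eqref{Poisson}. Every pairing reduces, via the resolvent identity, to expressions in $a,a_z,a_w,b_z,b_w,m_0,m_z,m_w$ divided by $z-w$. One then checks that the result is antisymmetric under $z\leftrightarrow w$ and also symmetric, hence zero.

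This bookkeeping is the main obstacle. To tame it, I would first treat the pure-quadratic case (dropping $c_\pm$, where $\{n_z+a_z,\,n_w+a_w\}=0$ is classical) and then group the terms by their degree in $(q,p)$. A cleaner route to try first is to note that $I_\pm(z)$ is built from the $2\times2$ Gram determinant of $(q,p)$ in the inner product $\langle\cdot,AR_z\cdot\rangle$. The map $(q,p)\mapsto$ the Gram matrix is a momentum map into $\mathfrak{sp}(2)$, so brackets of functions of the Gram entries reduce to brackets computed in that Lie algebra, tensored with the commuting family $AR_z$. If this structural argument fails, I would fall back on the direct expansion, using degree-by-degree cancellation and symmetry in $z,w$. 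Finally, the cases $z=w$ and $z=0$ follow by continuity.
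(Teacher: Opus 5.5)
Your Step 1 is correct, and it differs from the paper. Differentiating $I_\pm(z)$ directly, the $m_0a_z$ terms cancel against $\dot c_\pm a_z$, and the remainder is $2c_\pm m_z[1-c_\pm(1\pm a)]=0$. The paper checks the polynomial laws $I_k$ this way, but for $I_\pm(z)$ it gets conservation from involution, since $I_\pm(0)=\langle p,p\rangle+\tfrac{a}{1\pm a}=2H_\pm$; your $w\to\infty$ remark misses this.

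The genuine gap is Step 2, which is the real content of the second claim and which you only describe. Saying the bracket is antisymmetric and ``also symmetric'' is a restatement, not an argument. Indeed $\{I(w),I(z)\}=F(w,z)-F(z,w)$ with $F(w,z)=\langle\partial_qI(w),\partial_pI(z)\rangle$, and proving $F$ symmetric \emph{is} the computation.

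Your plan to start with $c_\pm$ dropped and add terms degree by degree also misses the mechanism. With $c_\pm\equiv 1$ the family does \emph{not} Poisson commute: writing $Q_z=\mp z(a_zb_z-m_z^2)$, one gets $\{a_w,Q_z\}+\{Q_w,a_z\}=\mp4(a_zm_w-a_wm_z)\neq0$, and all other pieces vanish. In the paper this obstruction is cancelled only by the $c_\pm^2$-terms coming from $\partial_q c_\pm$. The symmetrized brackets $\{I^{(1)},I^{(2)}\}$ and $\{I^{(2)},I^{(3)}\}$ contribute $4c^2(1+a)[a_zm_w-a_wm_z]=4c\,[a_zm_w-a_wm_z]$ in the unconfined case. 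This exactly offsets the remaining $c$-terms once those are rewritten with the four-term identity \eqref{id 2}. So the cancellation runs across degrees through $c_\pm(1\pm a)=1$, and none of it appears in your proposal.

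Two further problems would derail the bookkeeping you do sketch.

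\emph{Resolvent identity.} Yours is off by a factor of $A$. The correct relation is $R_w-R_z=(z-w)AR_zR_w$, so $AR_zR_w=(R_w-R_z)/(z-w)$ and $A^2R_zR_w=(AR_w-AR_z)/(z-w)$. Hence $\{a_z,b_w\}=4\langle q,A^2R_zR_wp\rangle=4(m_w-m_z)/(z-w)$, with no $\langle q,Ap\rangle$ term. Your formula actually computes $\langle q,A^3R_zR_wp\rangle$, which is where the spurious $m_0$ comes from. With the correct identity your scalar-reduction plan is viable: every pairing becomes a rational expression in $a,a_z,a_w,b_z,b_w,m_0,m_z,m_w$, and \eqref{id 2} becomes a tautology. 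But the resulting polynomial identity still has to be checked, using $c_\pm(1\pm a)=1$.

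\emph{Structural shortcut.} It fails as stated, because the weighted Gram entries do not close into $\mathfrak{sp}(2)$. For example, $\{a_z,b_z\}=4\langle q,A^2R_z^2p\rangle=-4\partial_z m_z$ is not a combination of $a_z,m_z,b_z$ unless $AR_z$ is a multiple of a projection. What these entries generate is a rational loop-algebra ($r$-matrix, Gaudin-type) structure in $z$. Even granting that, $I_\pm(z)$ is not a function of the Gram matrix at $z$ alone, since $c_\pm$ depends on $a=a_0$ and $n_z=\langle p,p\rangle-zb_z$. The term $c_\pm a_z$, where the nontrivial cancellation lives, is exactly what such an argument would not cover.
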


Note that the quantities \eqref{E:Iz combined} serve as generating functions for the more traditional `polynomial' conservation laws, whose explicit form may be derived by expanding $(1+zA)^{-1}$ in powers of $z$; see, for example, \eqref{Ik} and \eqref{expansion}.  Note that Poisson commutativity of the generating functions guarantees the Poisson commutativity of these polynomial conservation laws.

It is natural to ask if our conserved quantities agree with those in \cite{BM26}.  This is a little subtle because those authors merely prove that there is a conserved quantity with prescribed leading term.  While our leading term agrees with theirs, such conservation laws are not unique: one may add linear (or even polynomial) combinations of lower-indexed conserved quantities.  We contend that our polynomial conservation laws $I_k(q,p)$ are the canonical ones because (a) each is (at most) quadratic in the momentum variable and (b) they are all scaling homogeneous.

To clarify what we mean by scaling homogeneous, let us return to the case $A=-\Delta$ acting on $\R^d$, which admits the symmetry \eqref{scaling}.  It is elementary to verify that under this transformation, $I_k \mapsto \mu^{2k} I_k$.

To be useful in addressing the well-posedness problem, conservation laws need to be coercive, that is, they need to provide control on norms of the solution!  It is already instructive to examine the Hamiltonians \eqref{Hamiltonian} from this perspective.  In the $-$ case where solutions satisfy $\la q, Aq\ra<1$, the Hamiltonian $H_-(q,p)$ constrains the $\H^1$ norm
$$
\|(q,p)\|_{\H^1}^2:= \|A^{\frac12}q\|^2 + \|p\|^2
$$
of the solution. Indeed, the displacement $q$ is more strongly confined than for the linear wave equation. Correspondingly, we will call this the \emph{strongly confined} model.  

By contrast, in the $+$ case, the Hamiltonian $H_+(q,p)$ does not provide any upper bound on the $\H^1$ norm of the solution once $H_+(q,p)>\frac12$. Correspondingly, we will call this the \emph{unconfined} model.


Rather than jump back and forth between the two cases, we will focus primarily on the unconfined model for the rest of this paper; accordingly, the absence of a $\pm$ subscript will indicate the unconfined case.  We select this case for our analysis because it presents the greater difficulty in obtaining a priori bounds, as we have already seen in our examination of the Hamiltonian.  This point seems to have been already appreciated by Pohozaev: as the first application of the new conservation law \eqref{I Poho}, he proved a priori $\H^1$ bounds for solutions of the unconfined model in terms of the $\H^2$ norm of the initial data.  His bound grows linearly in time; this cannot be improved, as we observe in Corollary~\ref{C:93a}.

Naturally, one would like to bound the $\H^1$ norm of the solution in terms of the $\H^1$ norm of the initial data.  This is impossible, as we will demonstrate in Corollary~\ref{C:93b}.  Nevertheless, we are still able to prove $\H^1$-bounds for $\H^1$ initial data.  There is no contradiction here: our estimate is uniform on $\H^1$-\emph{precompact} sets of initial, rather than \emph{bounded} sets.  Actually, we will not need the full strength of compactness, merely boundedness and equicontinuity. Moreover, we are able to work in the broader class of $\H^s$ spaces defined in \eqref{Hs def}.

\begin{definition}[Equicontinuity]\label{D:equi}
Fix $s\geq 1$. We say that a bounded set $\Omega \subset \H^s$ is \emph{equicontinuous} in $\H^s$ if for any $\eps>0$ there exists $\kappa=\kappa(\eps)>0$ such that
\begin{equation}\label{equi}
\sup_{(q,p)\in\Omega}\, \|(P_{\geq \kappa}q, P_{\geq \kappa} p)\|_{ \H^s} \leq \eps.
\end{equation}
Here $P_{\geq \kappa}$ denotes the spectral projection \eqref{LP def}. 
\end{definition}

Evidently, \eqref{equi} expresses tightness of the spectral measures associated to $A$ and the vectors in $\Omega$. Our motivation for calling this property \emph{equicontinuity} originates from the case $A=-\Delta$.  When the operator $A$ has compact resolvent, any set $\Omega$ that is bounded and equicontinuous is automatically precompact.  This is not true in general; consider, for example, the case $A=-\Delta$ acting on $\R^d$.

Any $\H^s$-Cauchy sequence $(q_{0,n},p_{0,n})$ of initial data constitutes an $\H^s$-bounded and equicontinuous set.  Evidently, it would be advantageous to the well-posedness problem if one could not only prove that the corresponding orbits $(q_n(t),p_n(t))$ are $\H^s$-bounded at later times $t$, but also that they are equicontinuous.  This we can prove:

\begin{theorem}\label{T:1.2}
For either model \eqref{KP_pm}, every  $s\geq 1$, every $T>0$, and every set $\Omega$ of smooth initial data that is $\H^s$-bounded, $\H^s$-equicontinuous,
and satisfies
\begin{align}\label{energy_pm}
\sup_{(q,p) \in \Omega}\, H_\pm(q,p)<\infty,
\end{align}
the corresponding set of orbits
\begin{equation}\label{T:1.2a}
 \bigl\{ \bigl(q(t),p(t)\bigr) \big| \bigl(q(0),p(0)\bigr)\in\Omega \text{ and } |t|\leq T \bigr\}
\end{equation}
is likewise $\H^s$-bounded and $\H^s$-equicontinuous.
\end{theorem}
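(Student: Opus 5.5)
The plan is to use the generating function $I(z)$ from Theorem~\ref{T:1.1}, rescaled and taken with $z$ large ($z=\kappa^{-1}$ small in the right sense), as a coercive conserved quantity controlling high-frequency mass. I focus on the unconfined ($+$) case; the $-$ case is similar and easier. Write $\H^s$ norms via spectral calculus: $\|(q,p)\|_{\H^s}^2=\|A^{s/2}q\|^2+\|A^{(s-1)/2}p\|^2$.

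\textbf{Step 1: the case $s=1$ at a fixed scale.} For $z>0$, note that $\langle p,\tfrac{1}{1+zA}p\rangle$ and $\langle q,\tfrac{A}{1+zA}q\rangle$ capture the $\H^1$ mass at frequencies $\lesssim z^{-1}$. The term $\langle p,Ap\rangle\langle q,\tfrac{A}{1+zA}q\rangle$ type expression is cancelled by the bracket. Specifically, by Cauchy--Schwarz in the inner product $\langle\cdot,\tfrac{A}{1+zA}\cdot\rangle$, the bracket $\langle q,\tfrac{A}{1+zA}q\rangle\langle p,\tfrac{A}{1+zA}p\rangle-\langle p,\tfrac{A}{1+zA}q\rangle^2$ is nonnegative. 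So in the $+$ case it enters with a minus sign, which is the difficulty.

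I would instead consider the combination
\[
\langle p,p\rangle - I(z) \;\text{ and }\; c(q)\langle q,Aq\rangle - \dots
\]
More precisely, I would look at $D(z):=\tfrac1z\bigl[2H - I(z)\bigr]$. Here $2H=\langle p,p\rangle+c\langle q,Aq\rangle$, so
\[
\frac{1}{z}\bigl[\langle p,p\rangle-\langle p,\tfrac{1}{1+zA}p\rangle\bigr]=\langle p,\tfrac{A}{1+zA}p\rangle,
\]
and similarly the $q$ part gives $c\langle q,\tfrac{A^2}{1+zA}q\rangle$. The bracket contributes $+\bigl[\cdots\bigr]\ge 0$. Hence
\[
D(z)\ge \langle p,\tfrac{A}{1+zA}p\rangle + c(q)\langle q,\tfrac{A^2}{1+zA}q\rangle,
\]
with all terms nonnegative, and $D(z)$ is conserved. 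Conversely, $D(z)$ is bounded above by $C(z)(1+\|(q,p)\|_{\H^1}^2)^2$. The catch is that $c(q)=(1+\langle q,Aq\rangle)^{-1}$ may be small.

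To handle this, I first obtain an a priori $\H^1$ bound on $[-T,T]$. Conservation of $H$ bounds $\|p\|$ and $c\langle q,Aq\rangle<1$. Then $\tfrac{d}{dt}\langle q,Aq\rangle = 2\langle p,Aq\rangle$. I split $Aq$ at frequency $N$: the low part is $\lesssim N^{1/2}\|A^{1/2}q\|\|p\|$, and the high part is controlled by $D(z)$ with $z=1/N$. Doing so yields a Gronwall-type bound for $\langle q,Aq\rangle$ growing at most polynomially in $T$, uniformly over $\Omega$, using equicontinuity to bound $D(z)$ at $t=0$.

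\textbf{Step 2: equicontinuity.} Once $c(q(t))\ge c_0(T)>0$, the bound $D(z)(t)=D(z)(0)$ controls $\|P_{\ge\kappa}(q,p)\|_{\H^1}^2$. The high frequencies satisfy $\tfrac{A}{1+zA}\gtrsim z^{-1}$ on $A\ge z^{-1}$, so we need a version where the low-frequency part is subtracted. I would use $z D(z)$ against the conserved $2H$: the difference $2H-zD(z)$ is comparable to the $\H^1$ mass below frequency $1/z$ minus a bracket. Equicontinuity at time zero means $2H(0)-I(z)(0)\to0$ as $z\to0$, but only the \emph{combination} with $c$ tends to zero. To use this, I would exploit the quantity $\mathcal E(z)=\langle p,\tfrac{zA}{1+zA}p\rangle+c\langle q,\tfrac{zA^2}{1+zA}q\rangle + z\cdot\text{bracket} = 2H-I(z)$, which is conserved. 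Every term is nonnegative, and it dominates $c_0\|P_{\ge 1/z}(q,p)\|_{\H^1}^2$, while at $t=0$ it tends to $0$ uniformly on $\Omega$ by equicontinuity and boundedness. This is the cleanest route, and it yields the $s=1$ result directly. In fact Step 1's bound also follows from it: equicontinuity gives $\|A^{1/2}q\|^2\le N\|q\|^2$-type control, plus $\|q(t)\|\le\|q_0\|+T\sup\|p\|$.

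\textbf{Step 3: $s>1$.} I would apply the same argument to a weighted superposition $\int I(z)\,d\mu(z)$, or use the expansion of $I(z)$ in $z$ (equation \eqref{expansion}) to obtain conserved quantities comparable to $\H^s$ norms at high frequency. The bracket term for general $s$ is no longer sign-definite. I expect this to be the main obstacle: bounding the cross terms $\langle q,\cdot q\rangle\langle p,\cdot p\rangle-\langle p,\cdot q\rangle^2$ after integrating in $z$ against $z^{-s}$-type measures. I would bound them by $\H^1$ norms times high-frequency $\H^s$ norms using Cauchy--Schwarz, absorbing them thanks to the $\H^1$ control already obtained.
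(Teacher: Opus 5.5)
Your Step~2 for $s=1$ in the unconfined model is essentially the paper's argument. The quantity $2H-I(z)=I(0)-I(z)$ is conserved and coercive because the Gram bracket enters with a plus sign, and it is uniformly small at $t=0$ by equicontinuity. The resulting bound $c(q)\la q,\tfrac{zA^2}{1+zA}q\ra\le\tfrac12$ gives $\la q,Aq\ra\le 1+2\la q,\tfrac{A}{1+zA}q\ra$ with no prior lower bound on $c$ needed. Equicontinuity then follows once $c$ is bounded below.

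Two repairs are needed there:
\begin{itemize}
\item Step~1's splitting cannot work. The high-frequency part of $\la p,Aq\ra$ is an $\H^{3/2}$-level quantity, since it needs $A^{1/2}p$ at high frequency. By contrast, $D(1/N)$ only controls $N$ times the high-frequency $\H^1$ mass.
\item Your fallback $\|q(t)\|\le\|q_0\|+T\sup\|p\|$ uses $\|q_0\|$, which is not bounded on $\H^1$-bounded sets: the norm only sees $A^{1/2}q$ (think of $A=-\Delta$ on $\R^d$). Instead, estimate the low-frequency piece directly via $\tfrac{d}{dt}\la q,\tfrac{A}{1+zA}q\ra=2\la p,\tfrac{A}{1+zA}q\ra\le\tfrac{2}{\sqrt z}\|p\|\la q,\tfrac{A}{1+zA}q\ra^{1/2}$.
\end{itemize}

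The genuine gaps are in Step~3 and in the strongly confined model.

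\emph{Case $1<s<2$.} There is actually no sign problem here. Take the two-sided inequality (lower bound at time $t$, upper bound at time $0$) with $z=\kappa^{-2}$ and integrate against $\kappa^{2s-3}\,d\kappa$. The bracket is nonnegative for each $z$, so it can simply be dropped.

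\emph{Case $s\ge 2$.} These integrals diverge as $z\to 0$, so one must use the Taylor remainders $R_\ell(z)$ of \eqref{R}. The key fact missing from your plan is that, by Cauchy--Schwarz and AM--GM, each negative cross term in $R_\ell$ is dominated by a matching \emph{pair of positive cross terms}. For example, $\la p,\tfrac{A^{\ell+1}}{1+zA}q\ra\la p,\tfrac{A}{1+zA}q\ra$ is dominated by $\tfrac12\bigl[\la q,\tfrac{A}{1+zA}q\ra\la p,\tfrac{A^{\ell+1}}{1+zA}p\ra+\la p,\tfrac{A}{1+zA}p\ra\la q,\tfrac{A^{\ell+1}}{1+zA}q\ra\bigr]$. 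Consequently $R_\ell\ge\la p,\tfrac{A^{\ell+1}}{1+zA}p\ra+c(q)\la q,\tfrac{A^{\ell+2}}{1+zA}q\ra$.

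Your mechanism, bounding cross terms by $\H^1$ norms times high-frequency $\H^s$ norms and absorbing, does not close as stated. The $\H^1$ norms are merely bounded, not small. Moreover, the cross terms contain intermediate factors such as $\la p,Aq\ra$ that $\H^1$ does not control. An induction on $s$ that exploits the already-propagated lower-order equicontinuity might be arranged, but that is a different argument from the one you describe.

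\emph{The $-$ model.} It is not ``similar'', although the $\H^1$ bound is indeed free there. In $I_-(0)-I_-(z)$ the Gram bracket enters with a \emph{minus} sign, so ``every term is nonnegative'' fails. One needs $z\cdot[\text{bracket}]\le\la q,Aq\ra\la p,\tfrac{zA}{1+zA}p\ra$ together with the quantitative confinement $\la q,Aq\ra\le\tfrac{2H_-}{1+2H_-}$. This is exactly where hypothesis \eqref{energy_pm} enters, and your plan never uses it. For $s\ge\tfrac32$ in that model, the paper avoids Taylor remainders altogether. It instead uses a Gronwall bound for $\|\phi(A)p\|^2+c_-^2\|\phi(A)A^{1/2}q\|^2$, based on $|\tfrac{d}{dt}\log c_-(q)|\lesssim\|(q,p)\|_{\H^{3/2}}^2$, with a constant independent of the multiplier $\phi$.
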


In formulating this theorem, we have been guided by unity; a more complete picture can be found in Theorems~\ref{T:equicontinuity_H1}, \ref{T:equicontinuity_Hs}, and~\ref{T:confined}, as well as Corollary~\ref{C:1.2}.

In the unconfined case, the hypothesis \eqref{energy_pm} is automatically satisfied by virtue of $\H^s$ boundedness. Moreover, we will show that norms grow at most linearly in time; see Theorems~\ref{T:equicontinuity_H1} and \ref{T:equicontinuity_Hs}.

In the strongly confined case, the role of \eqref{energy_pm} is to enforce a uniform quantitative version of the restriction $\la q, Aq\ra<1$ across $\Omega$. For this model, we do not need equicontinuity of the set of initial data to obtain bounds.  Moreover, in Theorem~\ref{T:confined} we prove bounds that are uniform in time for all $1\leq s<2$.  This covers the regime $s\in[1,\tfrac32]$ where global well-posedness requires further attention.  Rather than repeating arguments from the more difficult unconfined case to treat larger $s$, we take the opportunity to exhibit a robust general method for propagating bounds and equicontinuity; see subsection~\ref{SS:4.2}, the climax of which is Corollary~\ref{C:1.2}.

Building on Theorem~\ref{T:1.2}, we make two contributions to the well-posedness question.

\begin{theorem}\label{T:1.3}
Both models \eqref{KP_pm} are globally well-posed in $\H^{s}$ for $s\geq \frac32$.
\end{theorem}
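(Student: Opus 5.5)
The approach is to combine standard local well-posedness for \eqref{KP_pm} in $\H^s$, $s\geq\frac32$, with the a priori bounds and equicontinuity of Theorem~\ref{T:1.2}. Continuous dependence then follows by a compactness argument that uses equicontinuity to control high frequencies.

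\emph{Local theory.} Write $\lambda(t)=\langle q,Aq\rangle$, so the equation is the linear wave equation $\ddot q=-c_\pm(q)^2Aq$ with a time-dependent scalar speed. For a given continuous $c(t)>0$, the linear flow preserves $\H^s$ for every $s$, with bounds depending only on $\inf c$ and the total variation of $c$. So I will set up a contraction on the scalar function $\lambda$ (or on $q$ itself) in $C_t\H^s$. The key quantity is
\[
\dot\lambda = 2\langle p,Aq\rangle ,
\]
which is bounded by $\|(q,p)\|_{\H^{3/2}}^2$ since $|\langle A^{1/2}p,A^{1/2}q\rangle|\le\|A^{1/4}p\|\,\|A^{3/4}q\|$. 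This is exactly why $s=\frac32$ is the threshold: $c(t)$ is then Lipschitz, and the modified energy
\[
\|p\|_{\H^{s-1}}^2+c^2\|A^{s/2}q\|^2
\]
satisfies a Gronwall inequality. The differences estimate closes in $\H^{s-1}$-type norms via the same Lipschitz bound. Standard Bona--Smith / frequency-truncation arguments then upgrade this to continuous dependence in $\H^s$. In the strongly confined case one must also keep $\lambda<1$; this persists on a short time interval determined by the data.

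\emph{Globalization.} By Theorem~\ref{T:1.2} applied to a single datum (a singleton is trivially equicontinuous), smooth solutions remain $\H^s$-bounded on $[-T,T]$. In the confined case $H_-$ also keeps $\lambda$ uniformly below $1$. The local existence time depends only on the $\H^{3/2}$ norm and on $1-\lambda$. Approximating data by smooth data and passing to the limit via the local theory therefore shows that the bounds persist, and iteration gives global existence.

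\emph{Continuity of the data-to-solution map.} Let $(q_{0,n},p_{0,n})\to(q_0,p_0)$ in $\H^s$. This sequence is bounded and equicontinuous, so by Theorem~\ref{T:1.2} the orbits on $[-T,T]$ are uniformly bounded and equicontinuous in $\H^s$. The difference estimate in a weaker norm, say $\H^{s-1}$ (or $\H^{1}$), converges by Gronwall, since it only requires uniform $\H^{3/2}$ bounds. Finally, interpolation with high-frequency smallness from equicontinuity upgrades this to convergence in $C_t\H^s$: split into $P_{<\kappa}$, where the weak-norm convergence suffices, and $P_{\geq\kappa}$, which is small uniformly in $n$.

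\emph{Main obstacle.} I expect the difference estimate at the threshold $s=\frac32$ to be the main obstacle. The term $(c_n^2-c^2)Aq$ requires controlling $|\lambda_n-\lambda|$, which is of the form $\langle q_n-q,A(q_n+q)\rangle$; this needs the difference in $\H^{1}$ against $q$ in $\H^{1}$. So I will measure differences in $\H^1$, where the energy method works since the coefficient is scalar, and rely on equicontinuity for the upgrade.
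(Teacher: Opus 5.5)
Your overall architecture matches the paper's: a priori $\H^s$ bounds and equicontinuity from Theorem~\ref{T:1.2}, then a Gronwall estimate for differences in a weaker norm, then an upgrade to $C_t\H^s$ by splitting into $P_{<\kappa}$ and $P_{\geq\kappa}$. The gap is that the step you single out as the crux is set in the wrong norm. With $\H^1$ differences it fails for $\frac32\leq s<2$, in particular at the endpoint.

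Take the symmetrized energy $\|p_n-p\|^2+\tfrac{c_n^2+c^2}{2}\|A^{1/2}(q_n-q)\|^2$. Its time derivative contains $-(c_n^2-c^2)\langle p_n-p, A(q_n+q)\rangle$. The scalar factor is harmless. The problem is that $A(q_n+q)$ must be paired with $p_n-p\in X$, which needs $Aq\in X$, i.e.\ $\H^2$ control; at $\H^{3/2}$ regularity, $A(q_n+q)$ only lies in $H^{-1/2}$.

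This is not a removable technicality: no Lipschitz $\H^1$ difference bound with constants depending on $\H^{3/2}$ norms can hold. Perturbing a low mode by $\eps$ changes the speed by about $\eps$. A mode at frequency $\lambda$ with unit $\H^{3/2}$ norm (hence $\H^1$ norm $\lambda^{-1/2}$) then dephases by $\lambda t\eps$. This produces an $\H^1$ discrepancy of order $\lambda^{1/2}t\eps$ while $\lambda t\eps\lesssim1$. The same heuristic shows that $\H^{s-1}$ is the strongest difference norm one can propagate from $\H^s$ bounds.

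Equicontinuity does not rescue an $\H^1$ Gronwall argument either. The low-frequency pairing $\langle P_{<\kappa}(p_n-p),A(q_n+q)\rangle$ costs a power of $\kappa$, so the Gronwall factor grows with $\kappa$ and swamps the high-frequency smallness. The same obstruction is why Section~\ref{S:6} only obtains existence, after extracting convergent speeds by Arzel\`a--Ascoli. Your diagnosis is also off: $\lambda_n-\lambda=\langle q_n-q,A(q_n+q)\rangle$ does not need the difference in $\H^1$. It is bounded by $\|(1+A)^{-1/4}A^{1/2}(q_n-q)\|$, an $\H^{1/2}$-level quantity, times the $\H^{3/2}$ norm of $q_n+q$.

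The fix is the option you mention only in passing as $\H^{s-1}$: measure differences in $\H^{1/2}$, via the functional of Lemma~\ref{L:F},
\[
F=\langle p-\tilde p,(1+A)^{-1/2}(p-\tilde p)\rangle+\tfrac{c(q)^2+c(\tilde q)^2}{2}\langle q-\tilde q,(1+A)^{-1/2}A(q-\tilde q)\rangle .
\]
Both $|c(q)^2-c(\tilde q)^2|$ and $\langle p-\tilde p,(1+A)^{-1/2}A(q+\tilde q)\rangle$ are bounded by $F^{1/2}$ times $\H^{3/2}$ norms. Also $\frac{d}{dt}c^2=-4c^3\langle p,Aq\rangle$ is controlled by $\H^{3/2}$ norms. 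Hence $|\frac{d}{dt}F|\lesssim(\|(q,p)\|_{\H^{3/2}}^2+\|(\tilde q,\tilde p)\|_{\H^{3/2}}^2)F$, with no frequency cutoff involved.

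The upgrade then closes precisely because the Gronwall constant is independent of $\kappa$. One has $\|P_{<\kappa}(\cdot)\|_{\H^s}\lesssim(1+\kappa)^{s-1/2}F^{1/2}$, and $P_{\geq\kappa}$ is handled by equicontinuity.

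The same correction applies to your local theory. A contraction for $q$ in the $C_t\H^s$ metric cannot close, since the flow is not Lipschitz in $\H^s$. Iterating on $\lambda$ in $C_t$ within a class of Lipschitz functions does work, once differences of the linear flows are estimated in $\H^{1/2}$.

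With that repaired, your route (cited local well-posedness plus the global bounds) is legitimate, and the paper remarks on it. The paper's own proof avoids a local theory altogether. It takes spectrally truncated data, for which Lemma~\ref{L:GWP} gives global smooth solutions, and proves they are Cauchy in $C_t\H^s$ via Lemma~\ref{L:F} and equicontinuity. Uniqueness and continuous dependence come from the same lemma.
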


The greatest novelty here is that we are able to work \emph{globally} in time, particularly, at the endpoint $s=3/2$.  For a broad class of tension functions, including \eqref{KP_pm}, the $\H^s$ \emph{local} well-posedness of \eqref{GenK_ODE} has been shown for $s\geq 3/2$; see \cite{AG,AP96I} and the references therein.  The global bounds announced in Theorem~\ref{T:1.2} immediately extend this to global well-posedness.  Global well-posedness for the strongly confined model when $s>3/2$ was obtained in \cite{Panizzi} through the construction of \emph{almost} conserved quantities.

\begin{theorem}\label{T:1.4}
For every initial data $(q_0,p_0)\in \H^1$, both models \eqref{KP_pm} admit a global $C_t \H^1$-solution.
\end{theorem}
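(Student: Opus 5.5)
We construct the solution by compactness, approximating $\H^1$ data by smooth data and using Theorem~\ref{T:1.2} to control the approximations.

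\emph{Step 1: approximate the data.} Fix $(q_0,p_0)\in\H^1$. In the strongly confined case we also require $\la q_0,Aq_0\ra<1$, since only then is $H_-(q_0,p_0)$ finite. Set $(q_{0,n},p_{0,n}) = (P_{<n}q_0, P_{<n}p_0)$. These data are smooth and converge to $(q_0,p_0)$ in $\H^1$. The set $\Omega=\{(q_{0,n},p_{0,n})\}$ is therefore $\H^1$-bounded and $\H^1$-equicontinuous. The uniform energy bound \eqref{energy_pm} also holds: $\la q_{0,n},Aq_{0,n}\ra\le \la q_0,Aq_0\ra<1$, so the denominator in $H_-$ stays bounded away from zero.

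\emph{Step 2: global smooth approximate solutions.} Smooth data lie in $\H^{3/2}$, so Theorem~\ref{T:1.3} (or local theory combined with Theorem~\ref{T:1.2}) gives global solutions $(q_n,p_n)$. Theorem~\ref{T:1.2} with $s=1$ shows that on each $[-T,T]$ the orbits are uniformly $\H^1$-bounded and $\H^1$-equicontinuous. In particular $c_\pm(q_n(t))$ stays bounded above and below uniformly in $n$ and $|t|\le T$. For the $-$ model this uses conservation of $H_-$, which keeps $\la q_n,Aq_n\ra\le 1-\delta$.

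\emph{Step 3: extract a limit and pass to the limit.} Take a low-frequency truncation $P_{<\kappa}$, where the operator $A$ is bounded. From the equation, $\frac{d}{dt}P_{<\kappa}(q_n,p_n)$ is uniformly bounded in $\H^1$. Arzelà--Ascoli in weak topologies, plus a diagonal argument, gives a subsequence converging in $C_t(\H^1_{\mathrm{weak}})$ on every compact time interval.

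\emph{This is the main obstacle.} The nonlinearity $c(q)=(1\pm\la q,Aq\ra)^{-1}$ is not weakly continuous, so weak convergence alone does not let us pass to the limit. Equicontinuity is what repairs this. The tails $\|P_{\ge\kappa}(q_n,p_n)\|_{\H^1}$ are uniformly small. When $A$ has compact resolvent, the low-frequency parts converge strongly, so $\la q_n,Aq_n\ra$ converges uniformly in $t$.

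For general $A$ (for example on $\R^d$) we avoid needing strong convergence of $q_n$ itself. Write $\lambda_n(t)=\la q_n,Aq_n\ra$. We bound its derivative:
\[
|\lambda_n'|=2|\la p_n,Aq_n\ra|.
\]
We would first try to show this is uniformly bounded, but $\la p,Aq\ra$ is not controlled by the $\H^1$ norm. Instead we split $\lambda_n=\la P_{<\kappa}q_n,AP_{<\kappa}q_n\ra+O(\eps)$. The low-frequency part has derivative bounded by $C\kappa\|(q_n,p_n)\|_{\H^1}^2$. Hence $\lambda_n$ is uniformly equicontinuous in time up to errors of size $\eps$, and Arzelà--Ascoli gives uniform convergence of $\lambda_n$ to some continuous $\lambda(t)$.

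The limit then solves the linear time-dependent wave equation $\ddot q=-(1\pm\lambda(t))^{-2}Aq$ in the weak sense. It remains to identify $\lambda(t)=\la q(t),Aq(t)\ra$, which is exactly strong $\H^1$ convergence of $q_n(t)$. To get this, compare the equation for $(q_n,p_n)$ with the linear flow driven by the coefficients $c_n$, and use the energy identity of that linear flow. Combining uniform convergence of the coefficients with convergence of the initial data in $\H^1$ should give $(q_n,p_n)\to(q,p)$ in $C_t\H^1$. Concretely, estimate $\|(q_n-q,p_n-p)\|_{\H^1}$ through the modified linear energy $\|p\|^2+c^2\la q,Aq\ra$ with a Grönwall argument in the coefficient difference.

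The limit is then a global $C_t\H^1$ solution, obtained first on each $[-T,T]$ and then for all $t$ by the diagonal argument.
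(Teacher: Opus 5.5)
Your overall architecture matches the paper's. You truncate the data with $P_{<n}$, use Theorem~\ref{T:1.2} for uniform $\H^1$ bounds and equicontinuity of the orbits, and get uniform convergence of the speeds by Arzel\`a--Ascoli. That last step splits $\la q_n,Aq_n\ra$ into a high-frequency part, small by equicontinuity, and a low-frequency part, Lipschitz in $t$ with constant $\lesssim\kappa$; this is exactly Lemma~\ref{L:c equi}.

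The gap is in the decisive final step, upgrading to convergence in $C_t\H^1$. The modified energy $\|\delta p\|^2+c^2\la\delta q,A\delta q\ra$ for the difference does not close at $\H^1$ regularity, for three reasons:
\begin{itemize}
\item Its time derivative contains $2c\dot c\,\la\delta q,A\delta q\ra$, and $\dot c_n=-2c_n^2\la p_n,Aq_n\ra$ is an $\H^{3/2}$ quantity. No $n$-uniform bound on $\int|\dot c_n|\,dt$ follows from $\H^1$ control. This is precisely why Lemma~\ref{L:F} and subsection~\ref{SS:4.2} require $s\ge\frac32$. Moreover, your limit speed is merely continuous.
\item The forcing term $(c_n^2-c^2)\la\delta p,Aq_n\ra$ is not controlled by $\H^1$ norms.
\item Dropping the weight $c^2$ only trades the first problem for the uncontrolled cross term $(1-c^2)\la\delta p,A\delta q\ra$.
\end{itemize}
So ``the energy identity of the linear flow'' gives no uniform $\H^1$ stability under sup-norm convergence of the coefficients. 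At this regularity, all uniform control comes from the conservation laws, via equicontinuity.

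The repair is the paper's Lemma~\ref{L:Cauchy}. Compare two approximants directly, and run Gronwall only on low frequencies with the \emph{unmodified} energy:
\[
\tfrac{d}{dt}\Bigl[\|P_{<\kappa}(p_n-p_m)\|^2+\|P_{<\kappa}A^{\frac12}(q_n-q_m)\|^2\Bigr]
=2(1-c_n^2)\la p_n-p_m,P_{<\kappa}A(q_n-q_m)\ra+2(c_m^2-c_n^2)\la p_n-p_m,P_{<\kappa}Aq_m\ra .
\]
Since $A\le\kappa^2$ on the range of $P_{<\kappa}$, the right side is at most $\kappa$ times the bracketed quantity plus $C(\Omega)\la T\ra\kappa\,|c_n-c_m|$. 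For fixed $\kappa$, Gronwall gives a harmless factor $e^{C\kappa T}$. Together with $\H^1$ convergence of the data and uniform convergence of the speeds, this makes the low-frequency part Cauchy uniformly on $[-T,T]$. The $P_{\ge\kappa}$ parts are uniformly small by the equicontinuity of the orbits.

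Choosing $\kappa$ first and then letting $n,m\to\infty$ shows $(q_n,p_n)$ is Cauchy in $C([-T,T];\H^1)$. This makes your weak-compactness extraction unnecessary, and the identification $\lambda(t)=\la q(t),Aq(t)\ra$ becomes automatic. Your diagonal argument over $T\in\N$ then gives the global solution.
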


We are unable to prove the uniqueness of such solutions at this time; this is the only obstruction to obtaining global well-posedness in $\H^1$.  The only previous result we are aware of regarding the existence of $C_t \H^s$ solutions with $s<3/2$ comes from \cite{GG}.  This paper constructs global $C_t \H^1$ solutions when the initial data is highly lacunary in the spectral variable; see also \cite{Manfrin_Lac} for a lacunary result in $C_t \H^2$.

\subsection*{Organization of the paper}  Throughout this introduction, we have taken the existence of global smooth solutions for granted.  This is justified by the existing works \cite{AG,Pohozaev85}.  Nevertheless, the fact that we are able to demonstrate strong quantitative bounds for smooth solutions (as well as the propagation of equicontinuity) provides an opportunity to give a presentation of well-posedness that is both streamlined and self-contained.  The structure of this paper is informed by the desire to do this while simultaneously proving Theorems~\ref{T:1.1} through~\ref{T:1.4} with a minimum of repetition.

Section~\ref{S:2} first introduces notations used throughout the paper.  We then construct global solutions for a very narrow class of initial data in Lemma~\ref{L:GWP}.  In this setting, $A$ is effectively a bounded operator and one may employ the Picard--Lindel\"of Theorem.

Section~\ref{S:3} is devoted to the proof of Theorem~\ref{T:1.1}.  We first introduce the polynomial conserved quantities \eqref{Ik} in the unconfined case and show directly that they are conserved for smooth solutions.  Next, we introduce the generating function \eqref{I+} and prove Poisson commutativity in Proposition~\ref{P:Poisson}.  The minor changes needed for the strongly confined case are discussed in subsection~\ref{SS:3.1}.

Section~\ref{S:4} culminates in the proof of Theorem~\ref{T:1.2}.  We begin by treating the unconfined model with $s=1$ in Theorem~\ref{T:equicontinuity_H1}, before treating larger $s$ in Theorem~\ref{T:equicontinuity_Hs}.  The conservation laws associated to the strongly confined model are used to treat $1\leq s<2$ in Theorem~\ref{T:confined}.  Building on the $s=3/2$ case of Theorem~\ref{T:confined}, we treat general $s\geq 3/2$ in Corollary~\ref{C:1.2} by a completely different method.

In Section~\ref{S:5}, we prove Theorem~\ref{T:1.3}.  We construct solutions by taking limits of the spectrally truncated solutions constructed in Lemma~\ref{L:GWP}; at this moment, these are the only solutions for which we may apply the results of Section~\ref{S:4} without appealing to prior work.  The restriction $s\geq 3/2$ ultimately originates from a Gronwall argument in Lemma~\ref{L:F}, which is used to demonstrate uniqueness and the continuous dependence of the solution on the initial data.  As in subsection~\ref{SS:4.2} and many prior works, this marks the threshold at which one can control the time derivative of the speed.  On the other hand, the way we use equicontinuity stemming from conservation laws to upgrade continuous dependence to the correct topology is quite unlike any prior work on these models.

Section~\ref{S:6} is devoted entirely to the proof of Theorem~\ref{T:1.4}.  Once again we take limits of spectrally truncated solutions.  Our success rests entirely on the fact that we can propagate both bounds and equicontinuity at the critical regularity $\H^1$.

The paper closes with Section~\ref{S:7}, which is devoted to the analysis of the unconfined model with initial data $(q_0,p_0)$ lying in a single eigenspace of the operator $A$.  In this setting, the problem is explicitly solvable by very traditional methods.  Nonetheless, such solutions provide a clear demonstration of some of the difficulties intrinsic to this model and justify the sharpness of our results.

\subsection*{Acknowledgements} Much of this work was completed while L.~C. and R.~K. were visiting the UCLA Mathematics Department; we are grateful for their hospitality.  L.~C. was partially supported by CNPq grants 07733/2023-8 and 404800/2024-6, and the FAPEMIG grant APQ-03752-25. R.~K. was supported by NSF grant DMS-2452346 and the project ANR-25-CFFS-0004 of the France 2030 program.  M.~V. was partially supported by NSF grant DMS-2348018.

\section{Notation and Preliminaries}\label{S:2}

Throughout the paper,  $(X, \la\cdot,\cdot\ra)$ will denote a Hilbert space and $A:\mathcal D(A) \to X$ will be nonnegative, selfadjoint operator, whose domain $\mathcal D(A)$ is dense in $X$.

For $N>0$ we define the orthogonal projections $P_{<N}$ and $P_{\geq N}$ as follows:
\begin{equation}\label{LP def}
P_{< N} := \chi_{[0,N^2)}(A) \qtq{and}  P_{\geq N}  := \chi_{[N^2, \infty)}(A)=1-P_{<N}.
\end{equation}
This notation is informed by the notion of Littlewood--Paley projections applicable to the case $A=-\Delta$.

For $s \in \R$, we define the space $H^s$ as the completion of the domain of $(1+A)^{\frac s2}$ with respect to the norm
\begin{equation}\label{H defn}
\|\phi\|_{ H^s}^2 = \la \phi, (1+A)^s\phi\ra.
\end{equation}
Note that if $s=0$, then $H^0=X$.  In the case when $A=-\Delta$ on $\R^d$, these spaces are precisely the traditional Sobolev spaces, which is our motivation for this notation.  The corresponding space of smooth functions is 
\begin{equation}\label{Hinfty}
H^{\infty} = \bigcap_{k\geq 0} H^k \qtq{with metric} d(f,g) = \sum_{k\geq 0} 2^{-k} \frac{\|f-g\|_{H^k}}{1 + \|f-g\|_{H^k}}.
\end{equation}
 
For wave equations on $\R^d$, the natural classes of initial data correspond to $\nabla_{t,x} u \in H^{s-1}(\R^d)$. The analogous spaces in our current abstract setting are defined as follows:  For $s \geq 0$,
\begin{equation}\label{Hs def}
\begin{aligned}
\H^s &:=\{(q,p)\in X\times X\, |\, A^{\frac12}q, p \in H^{s-1}\}
\end{aligned}
\end{equation}
equipped with the norm
\begin{align}\label{Hs norm}
\|(q,p)\|_{\H^s}^2:= \|A^{\frac12}q\|_{H^{s-1}}^2+ \|p\|_{H^{s-1}}^2.
\end{align}
The metric space $\H^\infty$ is defined analogously using \eqref{Hinfty}.

\begin{definition}[Solution]
Fix $s\geq 1$. For an open interval $I\subseteq \R$, we say that $(q,p)\in C(I;\H^s)\cap C^1(I; \H^{s-1})$ is an \emph{$\H^s$ solution} to the unconfined model if
\begin{equation}\label{KP+}
\tfrac{d}{dt}q = p, \quad \tfrac{d}{dt}p = -c^2(q) A q \qtq{in $\H^{s-1}$ sense with}c(q) := \tfrac{1}{1 + \langle q,A q\rangle} .
\end{equation}
In the strongly confined case, a trajectory must satisfy
\begin{equation}\label{KP-}
\begin{gathered}
\tfrac{d}{dt}q = p, \quad \tfrac{d}{dt}p = -c_-^2(q) A q \qtq{in $\H^{s-1}$ sense with}c_-(q) := \tfrac{1}{1 - \langle q,A q\rangle},\\
\text{as well as} \quad  \langle q(t),A q(t)\rangle < 1 \quad\text{for all $t\in I$.}
\end{gathered}
\end{equation}
When $I=\R$, we say that such a solution is \emph{global}. We say that $(q,p)$ is an \emph{$\H^\infty$ solution} if it is an $\H^s$ solution for each $s\geq 1$.
\end{definition}

As discussed in the introduction, throughout the paper we will mainly focus attention to the unconfined model. At the end of each section, we will quickly review any changes needed to cover the strongly confined case.

\begin{lemma}\label{L:GWP}
{\upshape(a)} If $A$ is bounded, then \eqref{KP+} admits a global $\H^1$ solution for each $(q_0,p_0)\in \H^1$; moreover, this solution is unique and conserves the Hamiltonian.

{\upshape(b)} Suppose $(q_0,p_0)\in \H^1$ and $q_0=P_{<N}q_0$ and $p_0= P_{<N}p_0$ for some $N>0$.  Then, \eqref{KP_pm} admits a global $C_t \H^\infty$ solution with this initial data. 
\end{lemma}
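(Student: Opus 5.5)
For part (a), I will treat \eqref{KP+} as an ODE on the Banach space $X\times X$. When $A$ is bounded, $\H^1$ is the same as $X\times X$ as a set, and its norm is weaker than the $X\times X$ norm. Since $q\mapsto \langle q,Aq\rangle$ is a bounded quadratic form and $\langle q,Aq\rangle\ge 0$, the map $(q,p)\mapsto (p,-c^2(q)Aq)$ is smooth. Moreover $c(q)\le 1$, so this map is locally Lipschitz on $X\times X$. Picard--Lindel\"of therefore gives a unique local $C^1$ solution in $X\times X$ for data $(q_0,p_0)\in X\times X$.

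One caveat: $\H^1$ data need not have $q_0\in X$ when $A$ has small spectrum, because the norm only controls $A^{1/2}q$. To handle this, I will run the ODE directly for $(q,p)$ in $X\times X$, which is the phase space in the definition of $\H^s$. The vector field is locally Lipschitz in the $\H^1$ variables, since $Aq=A^{1/2}(A^{1/2}q)$ and $A^{1/2}$ is bounded. This gives both existence and uniqueness.

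Next I will show that the Hamiltonian $H_+$ is conserved by differentiating along the flow:
\[
\tfrac{d}{dt}H_+ = \langle p,-c^2Aq\rangle + \tfrac12 M'(\langle q,Aq\rangle)\,2\langle Aq,p\rangle = 0,
\]
where $M(s)=s/(1+s)$, so that $M'(s)=c^2$. Then $\|p\|^2\le 2H_+$. The $q$ component is controlled by $\tfrac{d}{dt}q=p$, which gives $\|q(t)\|\le \|q_0\|+|t|\sqrt{2H_+}$. Hence the solution cannot blow up in finite time, and the standard continuation criterion yields a global solution.

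For part (b), I will reduce to part (a). The idea is to restrict the flow to the invariant subspace $X_N:=\operatorname{ran}P_{<N}$. On $X_N$ the operator $A$ is bounded, with norm at most $N^2$. Part (a), applied to the system on $X_N$, produces a global solution there. This is also a solution of the full system, because $A$ commutes with $P_{<N}$ and so the nonlinearity maps $X_N\times X_N$ into itself.

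The solution is smooth in the required sense because every $H^k$ norm restricted to $X_N$ is bounded by $(1+N^2)^{k/2}$ times the $X$ norm. Consequently continuity and differentiability in $X$ upgrade to $C_t\H^\infty$, and the equation holds in every $\H^{s-1}$.

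In the strongly confined case I will work on the open set $\{\langle q,Aq\rangle<1\}$, where the vector field is again locally Lipschitz. Conservation of $H_-$ keeps $\langle q,Aq\rangle/(1-\langle q,Aq\rangle)\le 2H_-$. This bounds $\langle q,Aq\rangle$ away from $1$, so the solution stays in a closed subregion on which the field is Lipschitz, and globality follows as before. The only delicate point is this confinement and the bookkeeping for $q\notin X$; everything else is routine.
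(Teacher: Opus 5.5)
Your proposal is correct and is essentially the paper's argument: Picard--Lindel\"of for bounded $A$, conservation of $H_+$ by direct differentiation, and, for part (b), restriction to the invariant subspace $\operatorname{ran}P_{<N}$, where all $\H^s$ norms are dominated by the $X\times X$ norm. The differences are minor: the paper checks that the vector field is \emph{globally} Lipschitz, so global existence comes straight from Picard--Lindel\"of without your conservation-plus-continuation step, and in the strongly confined case it replaces $c_-$ by a globally Lipschitz truncation (which conservation of $H_-$ shows is never felt) rather than continuing on the open set $\{\langle q,Aq\rangle<1\}$; your caveat about $q_0\notin X$ is moot because $\H^s\subset X\times X$ by \eqref{Hs def}, though running the ODE in the complete space $X\times X$ is a sensible choice.
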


\begin{proof}
When $A$ is bounded, \eqref{KP+} corresponds to a Lipschitz vector field on $\H^1$: Adopting the abbreviations $c=c(q)$ and $\tilde c = c(\tilde q)$, we have
\begin{align*}
c^2 Aq - \tilde c^2 A\tilde q = \tfrac{c^2+\tilde c^2}2 A(q-\tilde q)
-\tfrac{c \mskip2mu\tilde c \mskip2mu [c+ \tilde c]}2 \bigl\langle q+\tilde q, A (q-\tilde q)\bigr\rangle A(q+\tilde q)
\end{align*}
and so 
\begin{align*}
\bigl\|(p-\tilde p, -c^2 Aq &+ \tilde c^2 A\tilde q)\bigr\|_{\H^1}^2\\
&\leq \|A\| \|p-\tilde p\|^2 + \|A\| \|A^{\frac12}(q-\tilde q)\|^2\Bigl[ 1+ \tfrac{\|A^{\frac12}(q+\tilde q)\|^2}{(1+\la q, Aq\ra)(1+\la \tilde q, A\tilde q\ra)}\Bigr]^2\\
&\leq 9\|A\|\bigl\|(p-\tilde p, q-\tilde q)\bigr\|_{\H^1}^2 .
\end{align*}
In this way, existence and uniqueness follows from the Picard--Lindel\"of Theorem.  The conservation of the Hamiltonian $H_+$ in \eqref{Hamiltonian} is an elementary computation.

Part (b) follows from applying part (a) to the restriction of $A$ to the (invariant) Hilbert space $\chi_{(-\infty,N)}(A)X$.
\end{proof}

While it is not difficult to show that the solution in case (b) is also unique, we postpone this until Section~\ref{S:5} to avoid repeating ourselves.  There, we will show that \eqref{KP+} is well-posed in $\H^s$ for all $s\in[\frac32,\infty]$.

In the strongly confined case, the vector field is not globally Lipschitz; indeed, it blows up as $\langle q,A q\rangle$ approaches $1$. For true solutions, conservation of the Hamiltonian prevents such an approach; however, such conservation-law arguments do not apply to Picard iterates, they only apply to solutions.  The remedy is standard:  Given $0<H_0<\infty$, we define a new speed function
\begin{equation*}
c(q;H_0) := \tfrac{1}{1 - \langle q,A q\rangle} \qtq{when} \langle q,A q\rangle \leq \tfrac{2H_0}{1+2H_0}; \qtq{otherwise,}  c(q;H_0) := 1+2H_0.
\end{equation*}
This ensures the global Lipschitz property, thereby allowing solutions to the modified equation to be constructed by the Picard--Lindel\"of approach.  Finally, one employs conservation of the Hamiltonian to observe that solutions emanating from initial data satisfying $H_-(q,p)\leq H_0$ never experience the modification made to the speed function.

\section{Conservation laws}\label{S:3}

We start by discussing conservation laws for the unconfined model \eqref{KP+}; to streamline the notation, we will drop the subscript $+$ for the conserved quantities and the speed function. The minor modifications needed for the strongly confined model \eqref{KP-} will be discussed in subsection~\ref{SS:3.1}. 

\begin{proposition} Let $(q,p)$ be an $\H^\infty$ solution to \eqref{KP+}. For any integer $k \geq 0$, 
\begin{equation}\label{Ik}
\begin{aligned}
I_k(q,p) :=  \langle p, A^{k} p\rangle &+ \tfrac{\langle q, A^{k+1} q\rangle}{1 + \langle q, A q\rangle}\\
&+ \sum_{j=1}^{k} \Bigl[ \langle q, A^{j} q\rangle \langle p, A^{k-j+1} p\rangle - \langle p, A^{j} q\rangle \langle p, A^{k-j+1} q\rangle \Bigr]
\end{aligned}
\end{equation}
is conserved by the flow. Moreover, these observables are coercive:
\begin{align}\label{Ik coercive}
\la p, A^{k} p\rangle &+ \tfrac{\langle q, A^{k+1} q\rangle}{1 + \langle q, A q\rangle}\leq I_k(q,p)\lesssim_k\|(q,p)\|_{\H^{k+1}}^2 \bigl[1+\|(q,p)\|_{\H^{k+1}}^2 \bigr].
\end{align}
\end{proposition}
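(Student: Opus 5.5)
The plan is to prove conservation by differentiating $I_k$ along the flow \eqref{KP+} and checking that the result is identically zero, and to prove coercivity by exhibiting the sum in \eqref{Ik} as a sum of Gram determinants, which are nonnegative by Cauchy--Schwarz.

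\textbf{Conservation.} Since $(q,p)$ is an $\H^\infty$ solution, every pairing in \eqref{Ik} is differentiable in $t$. We may substitute $\dot q=p$ and $\dot p=-c^2Aq$, where $c=(1+\la q,Aq\ra)^{-1}$ and $\tfrac{d}{dt}c=-2c^2\la p,Aq\ra$. We abbreviate $a_j=\la q,A^jq\ra$, $b_j=\la p,A^jq\ra$ and $e_j=\la p,A^jp\ra$, so that
\begin{equation*}
\dot a_j=2b_j,\qquad \dot e_j=-2c^2b_{j+1},\qquad \dot b_j=e_j-c^2a_{j+1}.
\end{equation*}
Then $I_k=e_k+c\,a_{k+1}+\sum_{j=1}^k\bigl[a_je_{k-j+1}-b_jb_{k-j+1}\bigr]$. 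The first two terms contribute
\begin{equation*}
-2c^2b_{k+1}+2c\,b_{k+1}-2c^2b_1a_{k+1}.
\end{equation*}
The sum contributes
\begin{equation*}
\sum_{j=1}^k\Bigl[2b_je_{k-j+1}-2c^2a_jb_{k-j+2}-b_je_{k-j+1}-e_jb_{k-j+1}+c^2\bigl(a_{j+1}b_{k-j+1}+b_ja_{k-j+2}\bigr)\Bigr].
\end{equation*}
Reindexing $j\mapsto k-j+1$ shows that the $e$-terms cancel in pairs. The $c^2ab$ terms telescope, since $\sum_j a_{j+1}b_{k-j+1}$ and $\sum_j a_jb_{k-j+2}$ differ only in boundary terms; together with the symmetrized copy, what survives is $c^2\bigl(a_{k+1}b_1-a_1b_{k+1}\bigr)$, up to a factor I will recheck. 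Combining this with the first two terms, and using $c-c^2=c^2a_1$, i.e.\ $c(1-c)=c^2a_1$, everything should cancel. I expect this bookkeeping of boundary terms in the telescoping to be the only delicate point, so I will carry it out carefully, perhaps checking $k=1$ against \eqref{I Poho} first.

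\textbf{Coercivity, lower bound.} The lower bound amounts to showing that each bracket is $\geq0$ after pairing $j$ with $k-j+1$. Write $u_j=A^{j/2}q$, $v_j=A^{j/2}p$. Then symmetrizing the sum gives
\begin{equation*}
\tfrac12\sum_{j=1}^{k}\Bigl[\|u_j\|^2\|v_{k-j+1}\|^2+\|u_{k-j+1}\|^2\|v_j\|^2-2\,\mathrm{Re}\bigl(\la v_j,u_j\ra\overline{\la v_{k-j+1},u_{k-j+1}}\ra\bigr)\Bigr].
\end{equation*}
Here I use that $\la p,A^jq\ra=\la A^{j/2}p,A^{j/2}q\ra$ and $\la p,A^{k-j+1}q\ra=\la v_{k-j+1},u_{k-j+1}\ra$. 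By Cauchy--Schwarz, $|\la v_j,u_j\ra|\le\|u_j\|\|v_j\|$, so each summand is at least $(\|u_j\|\|v_{k-j+1}\|-\|u_{k-j+1}\|\|v_j\|)^2\ge0$. This gives the lower bound.

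\textbf{Coercivity, upper bound.} This is a direct estimate. First, $e_k\le\|p\|_{H^k}^2$ and $c\,a_{k+1}\le\|A^{1/2}q\|_{H^k}^2$. Second, each product term is bounded by $\|A^{j/2}q\|^2\|A^{(k-j+1)/2}p\|^2$ with $1\le j\le k$. Both factors are dominated by $\|(q,p)\|_{\H^{k+1}}^2$, since $A^{j/2}q=A^{(j-1)/2}A^{1/2}q$ with $j-1\le k$, and similarly for $p$. Summing over the $k$ terms yields $\lesssim_k\|(q,p)\|_{\H^{k+1}}^2\bigl[1+\|(q,p)\|_{\H^{k+1}}^2\bigr]$.
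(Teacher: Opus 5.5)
Your proposal is correct and follows essentially the same route as the paper. The conservation argument differentiates with $\dot a_j=2b_j$, $\dot e_j=-2c^2b_{j+1}$, $\dot b_j=e_j-c^2a_{j+1}$, cancels the $e$-terms by $j\mapsto k-j+1$, and telescopes the $c^2$-terms. The factor you left to recheck is $2$: the sum contributes $2c^2(a_{k+1}b_1-a_1b_{k+1})$, which exactly cancels $2c(1-c)b_{k+1}-2c^2a_{k+1}b_1=2c^2(a_1b_{k+1}-a_{k+1}b_1)$. Your lower bound is the paper's Cauchy--Schwarz/AM--GM pairing of $j$ with $k-j+1$.

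One small correction to the upper bound: the cross terms $b_jb_{k-j+1}$ are not individually bounded by $a_je_{k-j+1}$. However, the summed inequality $\sum_j|b_jb_{k-j+1}|\le\sum_j a_je_{k-j+1}$ from your lower-bound step (or direct Cauchy--Schwarz on each factor) gives exactly the estimate you need.
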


When $k=0$, the sum over $j$ is empty and we get $I_0(q,p)=2H(q,p)$, where $H(q,p)=H_+(q,p)$ is the unconfined Hamiltonian presented in \eqref{Hamiltonian}. When $k=1$, we recover Pohozaev's conserved quantity \eqref{I Poho}.

\begin{proof}
Using \eqref{KP+} and the chain rule, for any $k\geq 0$ we find
\begin{align*}
\tfrac{d}{dt}\la q,A^kq \ra &=  2\la p, A^{k}q\ra,\\
\tfrac{d}{dt}\la p,A^kp \ra &= - \tfrac{2\la p, A^{k+1}q\ra}{(1+\la q, Aq \ra)^2},\\
\tfrac{d}{dt}\la p,A^kq \ra &= -\tfrac{\la q, A^{k+1}q\ra}{(1+\la q, Aq \ra)^2}+\la p, A^k p\ra.
\end{align*}
Using these identities, we compute
\begin{align}\label{243}
\tfrac{d}{dt}\sum_{j=1}^{k} \langle q, A^{j} q\rangle \langle p, A^{k-j+1} p\rangle
&= 2\sum_{j=1}^{k} \langle p, A^{j} q\rangle \langle p, A^{k-j+1} p\rangle \notag\\
&\quad - \tfrac{2}{(1+\la q, Aq \ra)^2}\sum_{j=1}^{k}\langle q, A^{j} q\rangle \langle p, A^{k-j+2} q\rangle
\end{align}
and
\begin{align}\label{244}
\tfrac{d}{dt}\sum_{j=1}^{k} \langle p, A^{j} q\rangle& \langle p, A^{k-j+1} q\rangle=\sum_{j=1}^{k}\la p, A^j p\ra\la p, A^{k-j+1} q\ra+\langle p, A^{j} q\rangle\la p, A^{k-j+1} p\ra\notag\\
&\quad-\tfrac{1}{(1+\la q, Aq \ra)^2}\sum_{j=1}^{k}\la q, A^{j+1}q\ra\la p, A^{k-j+1}q\ra+\langle p, A^{j} q\rangle\la q, A^{k-j+2} q\ra.
\end{align}
Relabeling the summation index $j\mapsto k-j+1$ in the second and fourth summands on RHS\eqref{244}, we deduce
\begin{align*}
\tfrac{d}{dt}\sum_{j=1}^{k} \langle p, A^{j} q\rangle \langle p, A^{k-j+1} q\rangle
&=2\sum_{j=1}^k\la p, A^j p\ra\la p, A^{k-j+1} q\ra\\
&\quad -\tfrac{2}{(1+\la q, Aq \ra)^2}\sum_{j=2}^{k+1}\la q, A^{j}q\ra\la p, A^{k-j+2} q\ra.
\end{align*}
Combining this with \eqref{243}, we arrive at
\begin{align*}
\tfrac{d}{dt} \sum_{j=1}^{k} \Big( \langle q, &\,A^{j} q\rangle \langle p, A^{k-j+1} p\rangle - \langle p, A^{j} q\rangle \langle p, A^{k-j+1} q\rangle \Big)\\
&= \tfrac{-2}{(1+\la q, Aq \ra)^2}\Bigl[\la q, Aq\ra \la p,A^{k+1}q\ra-\la q,A^{k+1}q\ra\la p,Aq\ra\Bigr]\\
&=-\tfrac{d}{dt}\Bigl[\langle p, A^{k} p\rangle + \tfrac{\langle q, A^{k+1} q\rangle}{1 + \langle q, A q\rangle} \Bigr],
\end{align*}
which proves that $I_k(q,p)$ is indeed conserved.

By the Cauchy--Schwarz inequality,
\begin{align*}
\bigl| \langle p, A^{j} q\rangle \langle p, A^{k-j+1} q\rangle\bigr|
&\leq \sqrt{\la p, A^jp\ra \la q, A^j q\ra \la p, A^{k-j+1} p\ra\la q, A^{k-j+1} q\ra}\\
&\leq \tfrac12 \Bigl[\la q, A^jq\ra\la p, A^{k-j+1} p\ra+ \la p, A^jp\ra\la q, A^{k-j+1} q\ra  \Bigr].
\end{align*}
Summing over $1\leq j\leq k$ and performing the change of variables $j\mapsto k-j+1$ in the second summand above, we obtain
$$
\sum_{j=1}^k \bigl| \langle p, A^{j} q\rangle \langle p, A^{k-j+1} q\rangle\bigr|\leq \sum_{j=1}^k\la q, A^jq\ra\la p, A^{k-j+1} p\ra.
$$
This proves the first inequality in \eqref{Ik coercive}. The second inequality in \eqref{Ik coercive} follows from a straightforward application of Cauchy--Schwarz.
\end{proof}

We regard the conserved quantities $I_k(q,p)$ as the polynomial conservation laws for the model \eqref{KP+}. These can be combined into the generating function 
\begin{align}\label{expansion}
I(z;q,p) = \sum_{k\geq 0}(-1)^kz^k I_k(q,p).
\end{align}
This is easily summed explicitly to yield
\begin{equation}\label{I+}
\begin{aligned}
I(z;q,p)&=\langle p,\tfrac{1}{1+zA}p\rangle+c(q)\langle q,\tfrac{A}{1+zA}q\rangle\\
&\qquad - z\bigl[\langle q,\tfrac{A}{1+ zA}q\rangle\langle p,\tfrac{A}{1+ zA}p\rangle-\langle p,\tfrac{A}{1+ zA}q\rangle^2\bigr].
\end{aligned}
\end{equation}
Conversely, one can formally recover \eqref{Ik} from \eqref{I+} by expanding $(1+zA)^{-1} = \sum_{k\geq 0} (-1)^kz^kA^k$ and collecting terms with equal powers of $z$.

One virtue of the generating function is that it is well defined throughout $\H^1$; by contrast, the polynomial conservation law $I_k(q,p)$ requires $(q,p)\in \H^{k+1}$.

Our next result demonstrates that the generating function is conserved by the flow \eqref{KP+} and that $\{I(z;q,p)\}_{z\geq 0}$ defines a family of Poisson commuting observables.  

\begin{proposition}\label{P:Poisson} For any $z,w \geq 0$, we have
\begin{equation}\label{Poisson commute}
\{I(w;q,p),I(z;q,p)\}=0 \qtq{on} \H^\infty.
\end{equation}
In particular, as $I(0;q,p) =2H(q,p)$, the quantity $I(z)$ is conserved for all $\H^\infty$ solutions to \eqref{KP+}. More generally,
\begin{align}\label{1153}
\{I(z;q,p),I_k(q,p)\}=0 \qtq{and} \{I_k(q,p),I_\ell(q,p)\}=0
\end{align}
for all $z\geq0$ and integers $k,\ell\geq 0$.
\end{proposition}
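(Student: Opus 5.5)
The plan is a direct computation of the gradients of $I(z)$, followed by reduction of the bracket to scalar spectral quantities using resolvent identities. Write $R_z=(1+zA)^{-1}$ and introduce the scalar observables
\[
a_z=\langle q,AR_zq\rangle,\quad b_z=\langle p,AR_zp\rangle,\quad m_z=\langle p,AR_zq\rangle,\quad Q=\langle q,Aq\rangle,\quad c=(1+Q)^{-1}.
\]
Then $I(z)=\langle p,R_zp\rangle+c\,a_z-z(a_zb_z-m_z^2)$. On $\H^\infty$ all of these are smooth, and I will record
\[
\tfrac{\partial I(z)}{\partial p}=2R_zp-2z\bigl(a_zAR_zp-m_zAR_zq\bigr),
\]
\[
\tfrac{\partial I(z)}{\partial q}=-2c^2a_zAq+2cAR_zq-2z\bigl(b_zAR_zq-m_zAR_zp\bigr).
\]
I then insert these into \eqref{Poisson} to form $\langle \partial_qI(w),\partial_pI(z)\rangle-\langle\partial_pI(w),\partial_qI(z)\rangle$. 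Every resulting term is a product of scalars of the form $\langle x,f(A)y\rangle$ with $x,y\in\{q,p\}$ and $f$ a product of at most two resolvents times powers of $A$.

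The key simplification uses the resolvent identities, valid for $z\neq w$ (the case $z=w$ is trivial by antisymmetry):
\[
R_zR_w=\tfrac{zR_z-wR_w}{z-w},\qquad AR_zR_w=\tfrac{R_w-R_z}{z-w},\qquad A R_z=\tfrac{1-R_z}{z}.
\]
With these, every mixed inner product such as $\langle p,AR_wR_zp\rangle$ or $\langle q,A^2R_wR_zq\rangle$ is rewritten in terms of the single-resolvent quantities $a_\cdot,b_\cdot,m_\cdot,\langle p,R_\cdot p\rangle$ at $z$ and at $w$, together with $Q$ and $\langle p,q\rangle$-type terms. I will then sort the bracket by homogeneity. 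It is odd in $p$, so it splits into terms linear in $p$ and terms cubic in $p$. Within each class the terms are grouped by their dependence on $c$ (powers $c^0,c^1,c^2$).

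Antisymmetry of the bracket under $z\leftrightarrow w$ does a good part of the work: any term symmetric in $(z,w)$ after the rewriting must cancel against its mirror image. What remains should cancel identically. Two points will need care. First, the $c^2a_wAq$ term pairs with $\langle Aq,R_zp\rangle=m_z$-type quantities and must cancel against contributions involving $\partial_q c$. Second, the cubic terms coming from the products $z\,w\,(\dots)$ must cancel among themselves. I expect this bookkeeping, especially for the cubic terms and the divided differences $(R_w-R_z)/(z-w)$, to be the main obstacle. As a safeguard I will first check the case $w=0$, i.e.\ $\{I(z),2H\}=0$, since there $R_0=1$ and the identities degenerate nicely.

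Conservation of $I(z)$ then follows at once. For an $\H^\infty$ solution the chain rule gives $\tfrac{d}{dt}I(z)=\{I(z),H\}$, and $H=\tfrac12I(0)$.

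Finally, for \eqref{1153} I will not expand in power series, since that expansion need not converge for unbounded $A$. Instead I use $I_k=\tfrac{(-1)^k}{k!}\partial_w^kI(w)\big|_{w=0}$, which is justified on $\H^\infty$ by differentiating under the inner product via $\partial_w^kR_w=(-1)^kk!A^kR_w^{k+1}$. Applying $\partial_w^k$ at $w=0$ to \eqref{Poisson commute}, and then $\partial_z^\ell$ at $z=0$, commutes with the bracket because the gradients depend smoothly on $w$ in every $H^s$. This yields both identities in \eqref{1153}.
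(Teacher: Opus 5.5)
\paragraph{Review.} Most of your setup is correct. The gradient formulas, the resolvent identities, the parity and $z\leftrightarrow w$ antisymmetry bookkeeping, the deduction of conservation from $\{I(z),I(0)\}=0$, and the extraction of \eqref{1153} by differentiating at $w=0$ and $z=0$ all check out. The route is also the paper's: your cubic-in-$p$ terms are exactly the brackets among the $I^{(1)}+I^{(3)}$ pieces, and your linear-in-$p$ terms are the cross terms with $I^{(2)}=c\,a_z$.

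The gap is that the proposal stops where the proof begins. ``What remains should cancel identically'' is the entire content of the proposition, and the one place where you forecast the mechanism does not match what happens. The $\partial_q c$ contributions do not cancel against their $z\leftrightarrow w$ mirrors. Moreover, if you treat $c$ as a free symbol and sort by powers of $c$, neither the $c^2$ group nor the $c^1$ group vanishes.

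Concretely, pair $-2c^2a_wAq$ with $\partial_pI(z)$, use $zA^2R_z=A-AR_z$ (this is where $Q$ enters), and antisymmetrize. This yields $4c^2(1+Q)(a_zm_w-a_wm_z)$. The terms coming from $2cAR_wq$ antisymmetrize to $-4c\bigl[(za_z-wa_w)\langle p,A^2R_zR_wq\rangle-(zm_z-wm_w)\langle q,A^2R_zR_wq\rangle\bigr]$, which equals $-4c\,(a_zm_w-a_wm_z)$ by \eqref{id} and \eqref{id 2}. The sum is therefore $4(a_zm_w-a_wm_z)\bigl[c^2(1+Q)-c\bigr]$. It vanishes only because $c=(1+\langle q,Aq\rangle)^{-1}$, i.e.\ $c^2(1+Q)=c$.

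This relation is the one algebraic input beyond the resolvent identities; the paper uses it when combining \eqref{440} with \eqref{441}. It is also the point where the specific Pohozaev speed is used, so your plan needs to name it and use it.

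The cubic part, by contrast, goes through as you intend. Writing $\langle x,A^2R_zR_wy\rangle=(z-w)^{-1}\bigl[\langle x,AR_wy\rangle-\langle x,AR_zy\rangle\bigr]$ turns the $zw$-terms into a polynomial identity in $a_\cdot,b_\cdot,m_\cdot$ that vanishes identically; this is the paper's \eqref{eq:poisson_commutativity 4}. The remaining cubic terms cancel by \eqref{id}, which is the paper's \eqref{eq:poisson_commutativity 2}.
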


\begin{proof}
We decompose $I (z)=I^{(1)}(z)+I^{(2)}(z)+I^{(3)}(z)$, where
\begin{align*}
I^{(1)}(z)&:=\langle p,\tfrac{1}{1+zA}p\rangle, \quad  I^{(2)}(z) := c(q)\langle q,\tfrac{A}{1+zA}q\rangle \qtq{and} \\
I^{(3)}(z)& := -z\Bigl[\langle q,\tfrac{A}{1+zA}q\rangle\langle p,\tfrac{A}{1+zA}p\rangle-\langle p,\tfrac{A}{1+zA}q\rangle^2\Bigr].
\end{align*}
We have 
\begin{align}
\tfrac{\partial}{\partial q} I^{(1)} (z)&= 0 \qtq{and} \tfrac{\partial}{\partial p} I^{(1)} (z)= 2 \tfrac{1}{1+zA}p \label{eq:partial 1} \\
\tfrac{\partial}{\partial q} I^{(2)} (z)&= 2 c(q)  \tfrac{A}{1+zA}q - 2 c(q)^2\langle q, \tfrac{A}{1+zA}q\rangle Aq \qtq{and} \tfrac{\partial}{\partial p} I^{(2)}(z) = 0\label{eq:partial 2}\\
\tfrac{\partial}{\partial q} I^{(3)} (z)&= -2z\langle p, \tfrac{A}{1+zA}p\rangle  \tfrac{A}{1+zA}q +2z \langle p, \tfrac{A}{1+zA}q\rangle  \tfrac{A}{1+zA}p \label{eq:partial 3}\\
\tfrac{\partial}{\partial p} I^{(3)}(z) &= -2z\langle q, \tfrac{A}{1+zA}q\rangle  \tfrac{A}{1+zA}p +2z \langle p, \tfrac{A}{1+zA}q\rangle  \tfrac{A}{1+zA}q. \label{eq:partial 4}
\end{align}

By \eqref{eq:partial 1} and \eqref{eq:partial 2}, it is easy to see that
\begin{align}\label{eq:poisson_commutativity 1}
\{I^{(1)}(w),I ^{(1)}(z) \}=0 \qtq{and} \{I^{(2)}(w),I ^{(2)}(z) \}=0.
\end{align}

Using \eqref{eq:partial 1} and \eqref{eq:partial 3}, we find
\begin{align*}
&\{I^{(1)}(w),I^{(3)}(z)\} + \{I^{(3)}(w),I^{(1)}(z)\}\\ 
&=-\bigl\langle \tfrac{\partial}{\partial p} I^{(1)}(w),\tfrac{\partial}{\partial q} I^{(3)}(z) \bigr\rangle
	+ \bigl\langle \tfrac{\partial}{\partial q} I^{(3)}(w),\tfrac{\partial}{\partial p} I^{(1)}(z) \bigr\rangle\\
&= 4\la p,\tfrac{A}{(1+zA)(1+wA)}q\ra\la p, \tfrac{zA}{1+z A}p\ra- 4\la p,\tfrac{A}{(1+zA)(1+wA)}p\ra\la p, \tfrac{zA}{1+zA}q\ra\\
&\quad-4\la p,\tfrac{A}{(1+zA)(1+wA)}q\ra\la p, \tfrac{wA}{1+wA}p\ra+4\la p,\tfrac{A}{(1+zA)(1+wA)}p\ra\la p, \tfrac{wA}{1+wA}q\ra.
\end{align*}
Employing the identity
\begin{equation}\label{id}
 \tfrac{zA}{1+z A}- \tfrac{wA}{1+w A} = (z-w) \tfrac{A}{(1+z A)(1+wA)},
\end{equation}
this simplifies to 
\begin{align}\label{eq:poisson_commutativity 2}
\{I^{(1)}(w),I^{(3)}(z)\} &+ \{I^{(3)}(w),I^{(1)}(z)\} \\ 
&=4(z-w)\la p,\tfrac{A}{(1+zA)(1+wA)}q\ra\la p, \tfrac{A}{(1+z A)(1+wA)} p\ra\notag\\
&\quad-4(z-w)\la p,\tfrac{A}{(1+zA)(1+wA)}p\ra\la p, \tfrac{A}{(1+z A)(1+wA)} q\ra\notag\\
&=0.\notag
\end{align}

Next we show that
\begin{align}\label{eq:poisson_commutativity 4}
\{I^{(3)}(w),I ^{(3)}(z) \}=0.
\end{align}
A straightforward computation employing \eqref{eq:partial 3} and \eqref{eq:partial 4} yields
\begin{align*}
\{I^{(3)}&(w),I ^{(3)}(z) \}\\
&=4zw \la p, \tfrac{A^2}{(1+zA)(1+wA)}q\ra\Bigl[  \la q, \tfrac{A}{1+zA}q\ra\la p, \tfrac{A}{1+wA}p\ra -  \la q, \tfrac{A}{1+wA}q\ra\la p, \tfrac{A}{1+zA}p\ra\Bigr]\\
&\quad-4zw\la p, \tfrac{A^2}{(1+zA)(1+wA)}p\ra\Bigl[  \la q, \tfrac{A}{1+zA}q\ra\la p, \tfrac{A}{1+wA}q\ra -  \la q, \tfrac{A}{1+wA}q\ra\la p, \tfrac{A}{1+zA}q\ra\Bigr]\\
&\quad-4zw\la q, \tfrac{A^2}{(1+zA)(1+wA)}q\ra\Bigl[  \la p, \tfrac{A}{1+zA}q\ra\la p, \tfrac{A}{1+wA}p\ra -  \la p, \tfrac{A}{1+wA}q\ra\la p, \tfrac{A}{1+zA}p\ra\Bigr].
\end{align*}
To deduce the claim \eqref{eq:poisson_commutativity 4}, we apply the following identity in each of the square brackets above:
\begin{align}\label{id 2}
 \la f, \tfrac{A}{1+zA}g\ra\la \phi, \tfrac{A}{1+wA}\psi\ra &-  \la f, \tfrac{A}{1+wA}g\ra\la \phi, \tfrac{A}{1+zA}\psi\ra\notag\\
 &=(z-w) \la f, \tfrac{A}{(1+zA)(1+wA)}g\ra \la \phi, \tfrac{A^2}{(1+zA)(1+wA)}\psi\ra\\
 &\quad - (z-w) \la f, \tfrac{A^2}{(1+zA)(1+wA)}g\ra \la \phi, \tfrac{A}{(1+zA)(1+wA)}\psi\ra .\notag
\end{align}
More precisely, we take $f=g=q$ and $\phi=\psi=p$ in the first term, $f=g=\psi=q$ and $\phi=p$ in the second, and $f=\phi=\psi=p$ and $g=q$ in the third.

In view of \eqref{eq:poisson_commutativity 1}, \eqref{eq:poisson_commutativity 2}, and \eqref{eq:poisson_commutativity 4}, claim \eqref{Poisson commute} will follow once we show
\begin{align}\label{eq:poisson_commutativity 3}
\{I^{(1)}(w)+I^{(3)}(w),I ^{(2)}(z) \} +	\{I^{(2)}(w),I ^{(1)}(z)+I ^{(3)}(z) \} =0.
\end{align}
To this end, we use \eqref{eq:partial 2} and \eqref{eq:partial 4} to compute
\begin{align*}
&\{I^{(2)}(w),I^{(3)}(z) \} \\
&= \bigl\langle \tfrac{\partial}{\partial q} I^{(2)}(w),\tfrac{\partial}{\partial p} I^{(3)}(z) \bigr\rangle\\
&=-4c(q)\la q, \tfrac{zA}{1+zA}q\ra \la p, \tfrac{A^2}{(1+z A)(1+wA)} q\ra +4c(q)\la p, \tfrac{zA}{1+zA}q\ra \la q, \tfrac{A^2}{(1+z A)(1+wA)} q\ra\\
&\quad +4 c(q)^2 \la q, \tfrac{A}{1+wA}q\ra\la q, \tfrac{A}{1+zA}q\ra\la p, \tfrac{zA^2}{1+zA}q\ra \\
&\quad-4 c(q)^2 \la q, \tfrac{A}{1+wA}q\ra\la p, \tfrac{A}{1+zA}q\ra\la q, \tfrac{zA^2}{1+zA}q\ra\\
&=-4c(q)\la q, \tfrac{zA}{1+zA}q\ra \la p, \tfrac{A^2}{(1+z A)(1+wA)} q\ra +4c(q)\la p, \tfrac{zA}{1+zA}q\ra \la q, \tfrac{A^2}{(1+z A)(1+wA)} q\ra\\
&\quad +4 c(q)^2 \la q, \tfrac{A}{1+wA}q\ra\la q, \tfrac{A}{1+zA}q\ra\la p, \tfrac{zA^2}{1+zA}q\ra \\
&\quad-4 c(q)^2 \la q, \tfrac{A}{1+wA}q\ra\la p, \tfrac{A}{1+zA}q\ra\Bigl[ \la q, Aq\ra-  \la q, \tfrac{A}{1+zA}q\ra\Bigr] .
\end{align*}
From this (both as written and after interchanging the $z$ and $w$ variables) and the identity \eqref{id}, we obtain
\begin{align}\label{441}
\{I^{(2)}(w),I ^{(3)}(z) \} & +\{I^{(3)}(w),I ^{(2)}(z) \} \notag\\
&=-4(z-w)c(q)\la q, \tfrac{A}{(1+z A)(1+wA)}q\ra \la p, \tfrac{A^2}{(1+z A)(1+wA)} q\ra\notag\\
&\quad+4(z-w)c(q)\la p, \tfrac{A}{(1+z A)(1+wA)}q\ra \la q, \tfrac{A^2}{(1+z A)(1+wA)} q\ra\notag\\
&\quad +4 (z-w) c(q)^2 \la q, \tfrac{A}{1+wA}q\ra\la q, \tfrac{A}{1+zA}q\ra\la p, \tfrac{A^2}{(1+zA)(1+wA)}q\ra\notag\\
&\quad- 4(z-w) c(q)^2 \la q, \tfrac{A}{1+wA}q\ra\la q, \tfrac{A}{1+zA}q\ra\la p, \tfrac{A^2}{(1+zA)(1+wA)}q\ra \notag\\
&\quad + 4c(q)^2 \la q,Aq\ra \Bigl[ \la q, \tfrac{A}{1+zA}q\ra \la p, \tfrac{A}{1+wA}q\ra - 
	\la q, \tfrac{A}{1+wA}q\ra \la p, \tfrac{A}{1+zA}q\ra  \Bigr] \notag\\
&=-4(z-w)c(q)\la q, \tfrac{A}{(1+z A)(1+wA)}q\ra \la p, \tfrac{A^2}{(1+z A)(1+wA)} q\ra\\
&\quad+4(z-w)c(q)\la p, \tfrac{A}{(1+z A)(1+wA)}q\ra \la q, \tfrac{A^2}{(1+z A)(1+wA)} q\ra\notag\\
&\quad + 4c(q)^2 \la q,Aq\ra \Bigl[ \la q, \tfrac{A}{1+zA}q\ra \la p, \tfrac{A}{1+wA}q\ra - 
	\la q, \tfrac{A}{1+wA}q\ra \la p, \tfrac{A}{1+zA}q\ra  \Bigr]. \notag
\end{align}

Using \eqref{eq:partial 1} and \eqref{eq:partial 2}, we compute
\begin{align}\label{440}
\{I^{(1)}(w),&\, I ^{(2)}(z) \}+ \{I^{(2)}(w),I ^{(1)}(z) \} \notag\\
&= -\bigl\langle \tfrac{\partial}{\partial p} I^{(1)}(w),\tfrac{\partial}{\partial q} I^{(2)}(z) \bigr\rangle
	+ \bigl\langle \tfrac{\partial}{\partial q} I^{(2)}(w),\tfrac{\partial}{\partial p} I^{(1)}(z) \bigr\rangle\\
&=4c(q)^2 \Bigl[ \la q,\tfrac{A}{1+zA}q\ra \la p, \tfrac{A}{1+wA}q\ra  -  \la q, \tfrac{A}{1+wA}q\ra \la p,\tfrac{A}{1+zA}q\ra \Bigr], \notag
\end{align}
which we then combine with \eqref{441} to obtain
\begin{align*}
\{I^{(1)}&(w)+I^{(3)}(w),I ^{(2)}(z) \} +	\{I^{(2)}(w),I ^{(1)}(z)+I ^{(3)}(z) \} \\
&=-4(z-w)c(q)\la q, \tfrac{A}{(1+z A)(1+wA)}q\ra \la p, \tfrac{A^2}{(1+z A)(1+wA)} q\ra\\
&\quad+4(z-w)c(q)\la p, \tfrac{A}{(1+z A)(1+wA)}q\ra \la q, \tfrac{A^2}{(1+z A)(1+wA)} q\ra\notag\\
&\quad + 4c(q) \Bigl[ \la q, \tfrac{A}{1+zA}q\ra \la p, \tfrac{A}{1+wA}q\ra - 
	\la q, \tfrac{A}{1+wA}q\ra \la p, \tfrac{A}{1+zA}q\ra  \Bigr]. \notag
\end{align*}
Lastly, we apply the identity \eqref{id 2} with $f=g=\psi= q$ and $\phi=p$ to deduce the result \eqref{eq:poisson_commutativity 3}.  The claims in \eqref{1153} follow by examining Taylor coefficients about $w=0$ and $z=0$.
\end{proof}

\subsection{Conservation laws for the strongly confined model}\label{SS:3.1}

In this subsection, we discuss conservation laws for the strongly confined model \eqref{KP-}.

The analogue of the generating function $I(z;q,p)$ is
\begin{equation}\label{eq:definition_Iz_minus}
\begin{aligned}
I_-(z;q,p)&=\langle p,\tfrac{1}{1+zA}p\rangle+c_-(q)\langle q,\tfrac{A}{1+zA}q\rangle\\
&\quad +z\bigl[\langle q,\tfrac{A}{1+ zA}q\rangle\langle p,\tfrac{A}{1+ zA}p\rangle-\langle p,\tfrac{A}{1+ zA}q\rangle^2\bigr],
\end{aligned}
\end{equation}
which is well defined whenever $(q,p)\in \H^1$ and $z\geq 0$. Expanding about $z=0$, yields the polynomial conservation laws
\begin{align*}
I_k^-(q,p) :=  \langle p, A^{k} p\rangle &+ \tfrac{\langle q, A^{k+1} q\rangle}{1 - \langle q, A q\rangle}\\
&- \sum_{j=1}^{k} \Bigl[ \langle q, A^{j} q\rangle \langle p, A^{k-j+1} p\rangle - \langle p, A^{j} q\rangle \langle p, A^{k-j+1} q\rangle \Bigr].
\end{align*}

It is easy to adapt the proof of Proposition~\ref{P:Poisson} to verify that $\{I_-(z;q, p)\}_{z\geq 0}$ forms a Poisson commuting family of observables.  Indeed, mimicking the proof of Proposition~\ref{P:Poisson} with
\begin{align*}
I_-^{(1)}(z):= I^{(1)}(z), \quad I_-^{(2)}(z):= c_-(q)\langle q,\tfrac{A}{1+zA}q\ra, \quad I_-^{(3)}(z):=- I^{(3)}(z),
\end{align*}
and 
\begin{align*}
\tfrac{\partial}{\partial q} I_-^{(2)} (z)&= 2 c_-(q)  \tfrac{A}{1+zA}q + 2 c_-(q)^2\langle q, \tfrac{A}{1+zA}q\rangle Aq  \qtq{and} \tfrac{\partial}{\partial p} I_-^{(2)}(z) = 0
\end{align*}
in place of \eqref{eq:partial 2}, we obtain
\begin{equation*}
\{I_-(w;q,p),I_-(z;q,p)\}=0.
\end{equation*}

\section{Uniform bounds and equicontinuity}\label{S:4}

The goal of this section is to prove a priori bounds and equicontinuity of orbits under the flow \eqref{KP+}. We do this by further elucidating properties of the generating function $I(z;q,p)$.  As before, the lack of subscripts indicates the unconfined model. The modifications needed to obtain analogous results for the strongly confined model will be discussed in subsection~\ref{SS:4.1}.

\begin{definition}\label{property}
Given $\Omega\subset \H^\infty$ and $T>0$, we define the set of orbits emanating from $\Omega$ for times $|t|\leq T$ via
\begin{align*}
\Omega_T&:= \bigl\{ (q(t),p(t))\,|\, (q,p)\text{ is an $\H^\infty$ solution to \eqref{KP+} with $(q(0), p(0))\in \Omega$}\\
&\qquad\qquad\qquad\qquad\qquad   \text{and $t\in I\cap[-T,T]$}\bigr\}.
\end{align*}
\end{definition}

At this moment, we have not yet proved that $\H^\infty$ solutions to \eqref{KP+} exist and are global, nor that they are uniquely determined by the initial data. This explains the slightly cumbersome formulation of this definition compared to \eqref{T:1.2a}.

Our first objective is proving a priori bounds and equicontinuity at the critical regularity:

\begin{theorem}[$\H^1$ bounds and equicontinuity]\label{T:equicontinuity_H1} Suppose $\Omega \subset \H^\infty$ is bounded and equicontinuous in $\H^1$. For each $T>0$, $\Omega_T$ is bounded and equicontinuous in $\H^1$ with
\begin{align}\label{H1 bdd}
\sup_{(q,p)\in \Omega_T} \,\|p\| &\leq C(\Omega) \qtq{and} \sup_{(q,p)\in \Omega_T} \, \| A^{\frac12} q\| \leq C(\Omega)\la T\ra.
\end{align}
\end{theorem}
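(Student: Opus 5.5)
The plan is to deduce everything from conservation of the generating function $I(z)$ (Proposition~\ref{P:Poisson}), by studying the conserved difference $D(z;q,p):=I(0;q,p)-I(z;q,p)$ for small $z>0$.

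\textbf{Algebra for $D(z)$.} Write $R_z=\frac{A}{1+zA}$, $a=\la q,R_zq\ra$ and $c=c(q)$. Using $1-\frac1{1+zA}=zR_z$ and $A-R_z=zAR_z$, we get
\begin{equation*}
D(z)=z\la p,R_zp\ra\,(1+a)+c\,z\la q,AR_zq\ra-z\la p,R_zq\ra^2 .
\end{equation*}
By Cauchy--Schwarz, $\la p,R_zq\ra^2\le a\la p,R_zp\ra$. This gives the two-sided bound
\begin{equation*}
z\la p,R_zp\ra+c\,z\la q,AR_zq\ra\;\le\; D(z)\;\le\;(1+a)\,z\la p,R_zp\ra+c\,z\la q,AR_zq\ra .
\end{equation*}
The multiplier $\frac{zA}{1+zA}$ is at least $\frac12$ on $\{A\ge z^{-1}\}$. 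On $\{A<N^2\}$ it is at most $zN^2$, and it is always at most $1$. Hence, with $\kappa=z^{-1/2}$, the lower bound controls $\tfrac12\bigl(\|P_{\ge\kappa}p\|^2+c\|P_{\ge\kappa}A^{\frac12}q\|^2\bigr)$.

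\textbf{Smallness at time zero.} The upper bound is small uniformly on $\Omega$ once $z$ is small. Indeed, $a\le\la q,Aq\ra$ and $c\le1$, and splitting at frequency $N$ gives
\begin{equation*}
D(z)\lesssim\bigl(1+\|(q,p)\|_{\H^1}^2\bigr)\Bigl(zN^2\|(q,p)\|_{\H^1}^2+\|(P_{\ge N}q,P_{\ge N}p)\|_{\H^1}^2\Bigr).
\end{equation*}
By boundedness and equicontinuity of $\Omega$, for every $\eta>0$ there is $z(\eta)>0$ with $\sup_\Omega D(z)\le\eta$. Since $D(z)$ is conserved along $\H^\infty$ solutions, we conclude
\begin{equation*}
\sup_{\Omega_T}\Bigl(\|P_{\ge\kappa}p\|^2+c(q)\|P_{\ge\kappa}A^{\frac12}q\|^2\Bigr)\le2\eta,\qquad \kappa=z(\eta)^{-1/2},
\end{equation*}
and this holds for all times.

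\textbf{Bounds and equicontinuity.} Conservation of $I(0)=2H\lesssim\|(q,p)\|_{\H^1}^2$ gives $\|p(t)\|^2\le\sup_\Omega 2H=:C(\Omega)^2$. Combined with the previous display, this already yields equicontinuity of $p$ on $\Omega_T$, uniformly in $T$.

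For $q$ we split at frequency $\kappa$.
\begin{itemize}
\item Low frequencies: from $\frac{d}{dt}q=p$,
\begin{equation*}
\|P_{<\kappa}A^{\frac12}q(t)\|\le\|A^{\frac12}q(0)\|+\kappa\, C(\Omega)\,|t|.
\end{equation*}
\item High frequencies: the weight is $c(q)=(1+\|A^{\frac12}q\|^2)^{-1}$, so the conserved bound reads
\begin{equation*}
\|P_{\ge\kappa}A^{\frac12}q\|^2\le2\eta\bigl(1+\|A^{\frac12}q\|^2\bigr).
\end{equation*}
\end{itemize}
Fix $\eta=\frac18$, which fixes $\kappa=\kappa(\Omega)$. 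Adding the two pieces and absorbing the term $\frac14\|A^{\frac12}q\|^2$ into the left-hand side gives $\|A^{\frac12}q(t)\|\le C(\Omega)\la T\ra$, which is \eqref{H1 bdd}.

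With this bound in hand, equicontinuity of $q$ follows by rerunning the high-frequency estimate with $\eta\le\eps^2/(1+C(\Omega)^2\la T\ra^2)$.

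\textbf{Main obstacle.} The delicate step is the degenerate weight $c(q)$ in front of the $q$-part of $D(z)$: conservation alone only controls $c(q)\|P_{\ge\kappa}A^{\frac12}q\|^2$. The way around it is the bootstrap above, in which the weight is absorbed using the crude linear-growth bound for the low frequencies. This mechanism is consistent with the linear-in-$T$ growth being sharp (Corollary~\ref{C:93a}).
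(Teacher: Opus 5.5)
Your proposal is correct and follows essentially the same route as the paper. Both arguments use conservation and two-sided Cauchy--Schwarz coercivity of $I(0)-I(z)$, smallness of this quantity on $\Omega$ coming from equicontinuity, and absorption of the degenerate weight $c(q)$ via a linear-in-time bound on the low-frequency part of $A^{\frac12}q$. The differences are cosmetic: you split with the sharp projections $P_{<\kappa}, P_{\geq\kappa}$ and absorb with $\eta=\tfrac18$, whereas the paper works with the smooth multipliers $\tfrac{A}{1+zA}$ and $\tfrac{zA^2}{1+zA}=A-\tfrac{A}{1+zA}$ and arrives at $\la q,Aq\ra\le 1+2\la q,\tfrac{A}{1+zA}q\ra$.
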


\begin{proof}
The first bound in \eqref{H1 bdd} follows from the conservation of the unconfined Hamiltonian in \eqref{Hamiltonian}:
\begin{align}\label{p bdd}
\|p(t)\|^2\leq 2H(q,p)\leq \|p(0)\|^2+1.
\end{align}

To prove the second bound in \eqref{H1 bdd}, we will show that there exists $z_\Omega >0$ such that for any $0<z\leq z_{\Omega}$,
\begin{equation}\label{eq:boundqAq_equicontinuity}
\la q(t), Aq(t) \ra \leq 1 + 4\langle q(0), \tfrac{A}{1+zA}q(0)\rangle + \tfrac{8}{z}H(q(0),p(0)) t^2,
\end{equation}
uniformly for $(q(0),p(0))\in \Omega$ and all times of existence $t\in [-T,T]$. 

Using $\sqrt{\frac{A}{1+zA}} \leq \frac{1}{\sqrt{z}}$ in the sense of quadratic forms and \eqref{p bdd}, we compute
\begin{align*}
\tfrac{d}{dt}\langle q, \tfrac{A}{1+zA}q\rangle = 2 \langle p,\tfrac{A}{1+zA} q\rangle
&\leq \tfrac{2}{\sqrt{z}} \la p, p \ra^{\frac12}\, \langle q,\tfrac{A}{1+zA}q\rangle^{\frac12}\\
&\leq \tfrac{2\sqrt2}{\sqrt z} H(q(0),p(0))^{\frac12}\langle q,\tfrac{A}{1+zA}q\rangle^{\frac12}.
\end{align*}
Thus, for all $z>0$ and times of existence $t\in [-T,T]$ we obtain
\begin{equation*}
\langle q(t), \tfrac{A}{1+zA}q(t)\rangle^{\frac12} \leq   \langle q(0), \tfrac{A}{1+zA}q(0)\rangle^{\frac12} + \tfrac{\sqrt2}{\sqrt{z}}H(q(0),p(0))^{\frac12} |t|
\end{equation*}
and so
\begin{equation}\label{eq:boundqAq}
\langle q(t), \tfrac{A}{1+zA}q(t)\rangle \leq   2\langle q(0), \tfrac{A}{1+zA}q(0)\rangle + \tfrac{4}{z}H(q(0),p(0)) t^2.
\end{equation}

To connect \eqref{eq:boundqAq} to \eqref{eq:boundqAq_equicontinuity}, we will employ the conservation of
\begin{equation}\label{I diff}
\begin{aligned}
I(0;q,p)-I(z;q,p) &= \langle p, \tfrac{zA}{1+zA}p\rangle + c(q) \langle q, \tfrac{zA^2}{1+zA}q\rangle \\
&\quad+  \Bigl[ \langle q, \tfrac{A}{1+zA}q\rangle\langle p, \tfrac{zA}{1+zA}p\rangle -\langle p, \sqrt{\tfrac{zA}{1+zA}}\sqrt{\tfrac{A}{1+zA}}q\rangle^2  \Bigr].
\end{aligned}
\end{equation}
Importantly, this quantity is coercive. Indeed, by the Cauchy--Schwarz inequality,
\begin{align*}
 \langle q, \tfrac{A}{1+zA}q\rangle\langle p, \tfrac{zA}{1+zA}p\rangle -\langle p, \sqrt{\tfrac{zA}{1+zA}}\sqrt{\tfrac{A}{1+zA}}q\rangle^2 \geq 0
\end{align*}
and consequently,
\begin{equation}\label{CS}
\begin{aligned}
\langle p, \tfrac{zA}{1+zA}p\rangle + c(q) \langle q, \tfrac{zA^2}{1+zA}q\rangle
&\leq I(0;q,p)-I(z;q,p)\\
&\leq \bigl[1+\|A^{\frac12}q\|^2\bigr]\Bigl\| \sqrt{\tfrac{zA}{1+zA}}p\Bigr\|^2+ \Bigl\|\sqrt{\tfrac{zA}{1+zA}}A^{\frac12}q\Bigr\|^2.
\end{aligned}
\end{equation}

As $\Omega$ is bounded and equicontinuous in $\H^1$, we have
\begin{align*}
\lim_{z\to 0}\, \sup_{(q,p)\in \Omega}\, \Bigl\| \sqrt{\tfrac{zA}{1+zA}}p\Bigr\|^2+ \Bigl\|\sqrt{\tfrac{zA}{1+zA}}A^{\frac12}q\Bigr\|^2=0.
\end{align*}
Combining this with \eqref{CS}, we see that we may choose $z_{\Omega}>0$ such that
\begin{align}\label{I cont}
I(0;q,p)-I(z;q,p) \leq \tfrac12 \qtq{for all} 0<z\leq z_\Omega \qtq{and} (q,p)\in \Omega.
\end{align}

Using the conservation of \eqref{I diff} under the flow \eqref{KP+} together with \eqref{CS} and \eqref{I cont}, we find
\begin{equation*}
\sup_{(q,p)\in \Omega_T}\, c(q) \langle q, \tfrac{zA^2}{1+zA}q\rangle \leq \tfrac12  \qtq{for all} 0<z\leq z_\Omega.
\end{equation*}
Writing $\frac{zA^2}{1+zA} = A- \tfrac{A}{1+zA}$, we arrive at
\begin{equation*}
\la q, Aq\ra \leq 1 + 2\la q, \tfrac{A}{1+zA}q\ra \qtq{for all} 0<z\leq z_\Omega \qtq{and} (q,p)\in \Omega_T.
\end{equation*}
Claim \eqref{eq:boundqAq_equicontinuity} follows from this and \eqref{eq:boundqAq}.

We now turn to the propagation of equicontinuity under the flow \eqref{KP+}. Using the conservation of \eqref{I diff} and \eqref{CS}, we obtain
\begin{align}\label{659}
\langle p(t),\tfrac{zA}{1+zA}p(t)\rangle &+ c(q(t)) \langle q(t),\tfrac{zA^2}{1+zA}q(t)\rangle\notag\\
&\leq  \bigl[1+ \|A^{\frac12}q(0)\|^2\bigr]\langle p(0),\tfrac{zA}{1+zA}p(0)\rangle + \langle q(0), \tfrac{zA^2}{1+zA} q(0)\rangle,
\end{align}
uniformly for $(q(0),p(0))\in \Omega$ and times of existence $t\in[-T,T]$. Setting $z = \kappa^{-2}$ for $\kappa\geq 1$, we may bound
\begin{align*}
\langle p(t),P_{\geq  \kappa}\, p(t)\rangle &+ c(q(t))\langle q(t),P_{\geq  \kappa}\, Aq(t)\rangle\\
&\lesssim\langle p(t),\tfrac{A}{\kappa^2+A}p(t)\rangle + c(q(t)) \langle q(t),\tfrac{A^2}{\kappa^2+A}q(t)\rangle\\
&\lesssim \bigl[1+ \|A^{\frac12}q(0)\|^2\bigr]\Bigl[\tfrac1\kappa\la p(0),p(0)\ra + \langle p(0),P_{\geq \sqrt \kappa}\, p(0)\rangle\Bigr] \\
&\quad+  \tfrac1\kappa\langle q(0),A q(0)\rangle+ \langle q(0),P_{\geq \sqrt \kappa}\, Aq(0)\rangle,
\end{align*}
uniformly for $(q(0),p(0))\in \Omega$ and times of existence $t\in[-T,T]$.  The equicontinuity of $\Omega_T$ now follows from the boundedness and equicontinuity of $\Omega$ in $\H^1$ together with \eqref{H1 bdd}.
\end{proof}

Our next goal is to prove a priori bounds and equicontinuity of orbits in higher regularity spaces.

\begin{theorem}[$\H^s$ bounds and equicontinuity]\label{T:equicontinuity_Hs} For each $s>1$, we have the following universal bound for $\H^\infty$ solutions to \eqref{KP+}:
\begin{equation}\label{Hs bdd}
\begin{aligned}
\|p(t)\|_{H^{s-1}} &\leq C\bigl(\|(q(0),p(0))\|_{\H^s}\bigr)\\
\|A^\frac12 q(t)\|_{H^{s-1}} &\leq C\bigl(\|(q(0),p(0))\|_{\H^s}\bigr) \la t\ra.
\end{aligned}
\end{equation}
Moreover, if $\Omega \subset \H^\infty$ is bounded and equicontinuous in $\H^s$, then so too is the set of orbits $\Omega_T$ for any $T>0$.
\end{theorem}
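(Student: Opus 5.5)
The strategy is to use the generating function $I(z;q,p)$ as a \emph{spectral} family of conserved quantities, and to integrate the conservation of $I(0)-I(z)$ against a suitable measure in $z$ so as to reconstruct the $H^{s-1}$ weights. For $0<\sigma:=s-1<1$ we have the representation
\begin{equation*}
\lambda^{\sigma} = c_\sigma\int_0^\infty \tfrac{z\lambda}{1+z\lambda}\, z^{-\sigma}\,\tfrac{dz}{z},
\end{equation*}
which is obtained by substituting $u=z\lambda$. Integrating the lower bound in \eqref{CS} against $z^{-\sigma-1}dz$ therefore gives
\begin{equation*}
\langle p, A^{\sigma}p\rangle + c(q)\langle q,A^{1+\sigma}q\rangle \lesssim_\sigma \int_0^\infty [I(0)-I(z)]\,z^{-\sigma-1}\,dz .
\end{equation*}
The right-hand side is conserved, since each $I(z)$ is conserved by Proposition~\ref{P:Poisson}. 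By the upper bound in \eqref{CS}, it is at most $\bigl[1+\|A^{\frac12}q(0)\|^2\bigr]\langle p(0),A^\sigma p(0)\rangle+\langle q(0),A^{1+\sigma}q(0)\rangle$. Combining this with the $\H^1$ bounds of Theorem~\ref{T:equicontinuity_H1} would give \eqref{Hs bdd}. There is one subtlety, however: Theorem~\ref{T:equicontinuity_H1} gives bounds depending on $\Omega$ (via equicontinuity), not only on the $\H^1$ norm. For $s>1$ this is harmless, because a single datum bounded in $\H^s$ lies in an $\H^1$-equicontinuous set whose modulus is controlled by $\|\cdot\|_{\H^s}$ (namely $\|P_{\ge\kappa}(\cdot)\|_{\H^1}\le \kappa^{-(s-1)}\|\cdot\|_{\H^s}$). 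So $z_\Omega$, and hence $C(\Omega)$, depends only on $\|(q(0),p(0))\|_{\H^s}$.

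The weight $c(q(t))$ must be removed to bound $\|A^{\frac12}q(t)\|_{H^{s-1}}$. Since $c(q)^{-1}=1+\langle q,Aq\rangle\lesssim \langle t\rangle^2$ by \eqref{H1 bdd}, this crude step loses a factor $\langle t\rangle$ beyond the claimed linear growth, so I will do it more carefully. I will split frequencies at a level $\kappa$. The low frequencies are handled by integrating $\frac{d}{dt}\langle q,A^{1+\sigma}P_{<\kappa}q\rangle = 2\langle p, A^{1+\sigma}P_{<\kappa}q\rangle$ using the uniform bound on $\|p\|_{H^{\sigma}}$, exactly as in the derivation of \eqref{eq:boundqAq}; this gives linear growth with a constant depending on $\kappa$. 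The high frequencies are controlled by the conserved integral, which bounds $c(q)\langle q,A^{1+\sigma}P_{\ge\kappa}q\rangle$.

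This high-frequency step is where I expect the main obstacle to be. I would try to exploit the fact that $c(q)$ multiplies the whole of $\langle q,A^{1+\sigma}q\rangle$. Since $\langle q,A^{1+\sigma}q\rangle\ge \kappa^{2\sigma}\langle q,AP_{\ge\kappa}q\rangle$, the conserved quantity forces $\langle q,AP_{\ge\kappa}q\rangle\le 1$ (say) once $\kappa$ is large depending on the data; this mimics the proof of \eqref{eq:boundqAq_equicontinuity}. Then $c(q)^{-1}\lesssim 1+\langle q,AP_{<\kappa}q\rangle\lesssim_\kappa\langle t\rangle^2$. The high-frequency $H^\sigma$ part would then be $\lesssim\langle t\rangle^2$ after all, unless one uses $\langle q,A^{1+\sigma}P_{\ge\kappa}q\rangle \le c^{-1}\cdot C$ together with interpolation. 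If the clean linear bound resists, I would fall back on a bootstrap: write $c(q)^{-1}\le 1+\|A^{\frac12}q\|^2$ and use $\|A^{\frac12}q\|\lesssim\langle t\rangle$ only from Theorem~\ref{T:equicontinuity_H1}, trying to gain from the $\sqrt{\cdot}$ when bounding norms rather than squares.

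For $\sigma\ge1$ I would write $\sigma=m+\theta$ with $m\in\N$ and $\theta\in[0,1)$. The integer part would be handled using the polynomial laws $I_k$ and \eqref{Ik coercive}, and the fractional part using the analogous integral of the $z$-derivatives of $I$, i.e.\ the Taylor remainders of $I(z)$ at order $m$. By the coercivity argument of \eqref{CS} applied to these remainders, these are bounded below by the same leading terms with $A^{m}$ inserted.

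Equicontinuity would then follow exactly as at the end of the proof of Theorem~\ref{T:equicontinuity_H1}: I would restrict the $z$-integral to $z\le\kappa^{-2}$. On the initial-data side, the resulting tail is small uniformly over $\Omega$ by its $\H^s$ equicontinuity. On the solution side, it dominates $\langle p,A^\sigma P_{\ge\kappa'}p\rangle + c(q)\langle q,A^{1+\sigma}P_{\ge\kappa'}q\rangle$ for $\kappa'\gg\kappa$. The bound $c(q(t))\gtrsim\langle T\rangle^{-2}$ then removes the weight uniformly on $|t|\le T$; no sharp growth rate is needed for this step.
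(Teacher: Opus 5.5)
\textbf{Overall.} Your argument is essentially the paper's. Integrating the two-sided bound \eqref{CS} against $z^{-\sigma-1}\,dz$ is exactly what the paper does when it sets $z=\kappa^{-2}$ in \eqref{659} and integrates against $\kappa^{2s-3}\,d\kappa$; under $z=\kappa^{-2}$ the two measures agree up to a factor of $2$. For $s\geq 2$ the paper likewise combines $I_\ell$ with the Taylor remainders $R_\ell(z)$ and their Cauchy--Schwarz coercivity.

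\textbf{The weight $c(q(t))$ is not an obstacle.} Your concern here comes from confusing a norm with its square. The conserved integral bounds $c(q)\langle q,A^{1+\sigma}q\rangle$. By \eqref{H1 bdd}, $c(q(t))^{-1}=1+\|A^{1/2}q(t)\|^2\lesssim\langle t\rangle^2$, so $\langle q(t),A^{1+\sigma}q(t)\rangle\lesssim\langle t\rangle^2$. This quantity is the \emph{square} of the homogeneous part of $\|A^{1/2}q(t)\|_{H^{s-1}}$. Hence $\|A^{1/2}q(t)\|_{H^{s-1}}\lesssim\langle t\rangle$, which is exactly the linear growth claimed in \eqref{Hs bdd}.

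This crude step is precisely the paper's argument. The frequency splitting is unnecessary, and the ``fallback'' you mention at the end of that paragraph is in fact the whole proof. Your remark that the high-frequency part ``would be $\lesssim\langle t\rangle^2$'' is the same slip: that bound concerns the squared norm, so it again gives linear growth.

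\textbf{A small point on equicontinuity at integer $s=m+1\geq 2$.} In this case there is no $z$-integral to truncate. Truncating $\int_0^{\kappa^{-2}}R_m(z)\,dz$ would not work: it produces the weight $A^{m}\log(1+A/\kappa^2)$, which is not controlled by the $\H^{m+1}$ norm of the data. Instead, evaluate $R_m$ at the single point $z=\kappa^{-2}$ and use \eqref{eq:comparison}, exactly as in the $\H^1$ argument. This is what the paper does.
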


\begin{proof}
As $\H^s$ balls are $\H^1$ equicontinuous, Theorem~\ref{T:equicontinuity_H1} guarantees
\begin{equation}\label{H1 bdd'}
 \|p(t)\| \leq C\bigl(\|(q(0),p(0))\|_{\H^s}\bigr) \qtq{and}
\|A^\frac12 q(t)\|\leq C\bigl(\|(q(0),p(0))\|_{\H^s}\bigr) \la t\ra.
\end{equation}

We first present the details for regularities $1<s<2$. In this case, we further build on \eqref{659}, which was derived by analyzing the difference $I(0;q,p) - I(z;q,p)$. Setting $z=\kappa^{-2}$ in \eqref{659} we obtain
\begin{align*}
\langle p(t),\tfrac{A}{\kappa^2+A}p(t)\rangle &+ c(q(t)) \langle q(t),\tfrac{A^2}{\kappa^2+A}q(t)\rangle\\
&\lesssim \bigl[1+ \|A^{\frac12}q(0)\|^2\bigr]\langle p(0),\tfrac{A}{\kappa^2+A}p(0)\rangle + \langle q(0), \tfrac{A^2}{\kappa^2+A} q(0)\rangle.
\end{align*}
Integrating this against $\kappa^{2s-3}$ and using 
\begin{equation*}
\int_{\kappa_0}^{\infty} \frac{\kappa^{2s-3}}{\kappa^2 + \lambda^2} \,d\kappa\approx_{s} (\kappa_0^2+\lambda^2)^{s-2},
\end{equation*}
we obtain
\begin{align}\label{eq:uniform_Hs_control}
\langle p(t), (\kappa_0^2&+A)^{s-2} Ap(t)\rangle + c(q(t)) \langle q(t), (\kappa_0^2+A)^{s-2}  A^2q(t)\rangle\notag\\
&\hspace{-3pt}\lesssim_s \bigl[1+ \|A^{\frac12}q(0)\|^2\bigr]\langle p(0), (\kappa_0^2+A)^{s-2} Ap(0)\rangle +  \langle q(0), (\kappa_0^2+A)^{s-2} A^2q(0)\rangle.
\end{align}
In particular, taking $\kappa_0=0$ in \eqref{eq:uniform_Hs_control} we get
\begin{align*}
\langle p(t), A^{s-1} p(t)\rangle &+ c(q(t)) \langle q(t),  A^sq(t)\rangle\\
&\quad\lesssim_s \bigl[1+ \|A^{\frac12}q(0)\|^2\bigr]\langle p(0),A^{s-1} p(0)\rangle + \langle q(0), A^sq(0)\rangle \\
&\quad \lesssim_s \|(q(0),p(0))\|_{ \H^s}^2\bigl[1+ \|(q(0),p(0))\|_{\H^1}^2\bigr]. 
\end{align*}
Claim \eqref{Hs bdd} follows from this and \eqref{H1 bdd'}.

Equicontinuity of $\Omega_T$ in $\H^s$ also follows from \eqref{eq:uniform_Hs_control}.  Indeed, inserting
\begin{align*}
\langle p(t),P_{\geq  \kappa_0}\, &A^{s-1}p(t)\rangle + c(q(t))\langle q(t),P_{\geq  \kappa_0}\, A^s q(t)\rangle\\
&\lesssim \langle p(t), (\kappa_0^2+A)^{s-2} Ap(t)\rangle + c(q(t)) \langle q(t), (\kappa_0^2+A)^{s-2}  A^2q(t)\rangle
\end{align*}
and
\begin{align*}
\la p(0), (\kappa_0^2+A)^{s-2}A p(0)\ra &\lesssim \kappa_0^{s-2}\la p(0), A^{s-1}p(0)\ra + \la p(0), P_{\geq \sqrt \kappa_0} A^{s-1}p(0)\ra,\\
\la q(0), (\kappa_0^2+A)^{s-2}A^2 q(0)\ra &\lesssim  \kappa_0^{s-2}\la q(0), A^{s}q(0)\ra + \la q(0), P_{\geq \sqrt \kappa_0} A^{s}q(0)\ra
\end{align*}
into \eqref{eq:uniform_Hs_control} and using \eqref{H1 bdd'} and the $ \H^s$ boundedness and equicontinuity of $\Omega$, we see that $\Omega_T$ is also $ \H^s$ equicontinuous.

We now turn to the case of higher regularities $s\geq 2$. If $s=\ell+1$ with $\ell\in \N$, then \eqref{Hs bdd} follows from \eqref{Ik coercive}, the conservation of $I_{\ell}(q,p)$ under the flow \eqref{KP+}, and \eqref{H1 bdd'}. To prove $\H^s$ equicontinuity of $\Omega_T$ when $s=\ell+1$, as well as a priori bounds and equicontinuity when $\ell+1<s<\ell+2$, we use a higher order Taylor expansion of $I(z;q,p)$.  Specifically, for an integer $1\leq \ell\leq s-1$ we introduce the quantity
\begin{equation}\label{R}
R_\ell(z;q,p) = (-1)^{\ell+1} z^{-\ell-1}\biggl[I(z;q,p) -\sum_{j=0}^{\ell} \frac{I^{(j)}(0;q,p)z^j}{j!}\biggr].
\end{equation}
By \eqref{Ik coercive}, the quantity $R_\ell(z;q,p)$ is finite for $(q,p)\in \H^{\ell+1}\subseteq \H^s$.  As $I(z;q,p)$ is conserved by the flow \eqref{KP+}, so is $R_\ell(z;q,p)$.

Using either the definition \eqref{I+} of $I(z;q,p)$ or the expansion \eqref{expansion}, it is easy to verify that
$$
I^{(j)}(0;q,p)= (-1)^j j! \,I_j(q,p),
$$
where $I_j(q,p)$ are  as defined in \eqref{Ik}. In this way, after considerable rearrangement we arrive at 
\begin{align*}
R_\ell(z;q,p )&=\la p, \tfrac{A^{\ell+1}}{1+zA}p\ra + c(q)\la q, \tfrac{A^{\ell+2}}{1+zA}q\ra\\ 
&\quad+\tfrac12\Bigl[\la q, \tfrac{A}{1+zA}q\ra\la p, \tfrac{A^{\ell+1}}{1+zA}p\ra +\la p,  \tfrac{A}{1+zA}p\ra \la q,\tfrac{A^{\ell+1}}{1+zA}q\ra \\
&\quad\qquad\qquad+\sum_{j=0}^{\ell-1}\la q, A^{j+1} q\ra\la p,\tfrac{A^{\ell+1-j}}{1+zA}p\ra+\la p, A^{j+1} p\ra \la q,\tfrac{A^{\ell+1-j}}{1+zA}q\ra \Bigr]\\
&\quad - \Bigl[\la p, \tfrac{A^{\ell+1}}{1+zA}q\ra\la p, \tfrac{A}{1+zA}q\ra+\sum_{j=0}^{\ell-1}\la p, A^{j+1} q\ra \la p,\tfrac{A^{\ell+1-j}}{1+zA}q\ra\Bigr].
\end{align*}
An application of the Cauchy--Schwarz inequality then yields 
\begin{align}\label{eq:uniform_control_H^k}
\la p, \tfrac{A^{\ell+1}}{1+zA}p\ra + c(q)\la q, \tfrac{A^{\ell+2}}{1+zA}q\ra
\leq R_\ell(z;q,p) &\lesssim\la p, \tfrac{A^{\ell+1}}{1+zA}p\ra + \la q,\tfrac{A^{\ell+2}}{1+zA}q\ra \notag\\
&\quad +\|A^{\frac12}q\|_{ H^{\ell}}^2\bigl[\la p,\tfrac{A}{1+zA}p\ra + \la p,\tfrac{A^{\ell+1}}{1+zA}p\ra\bigr]\\
&\quad +\|p\|_{ H^\ell}^2\bigl[\la q,\tfrac{A^{2}}{1+zA}q\ra + \la q,\tfrac{A^{\ell+1}}{1+zA}q\ra\bigr].\notag
\end{align}
Taking $z=\kappa^{-2}$ in \eqref{eq:uniform_control_H^k} and using the conservation of $R_\ell(z;q,p)$, we find
\begin{align}\label{613}
\la p(t), \tfrac{A^{\ell+1}}{\kappa^2+A}&p(t)\ra + c(q(t))\la q(t), \tfrac{A^{\ell+2}}{\kappa^2+A}q(t)\ra\notag\\
&\quad\lesssim\la p(0), \tfrac{A^{\ell+1}}{\kappa^2+A}p(0)\ra + \la q(0),\tfrac{A^{\ell+2}}{\kappa^2+A}q(0)\ra\notag\\
&\qquad +\|(q(0),p(0))\|_{\H^{\ell+1}}^2\Bigl[\la p(0),\tfrac{A}{\kappa^2+A}p(0)\ra + \la p(0),\tfrac{A^{\ell+1}}{\kappa^2+A}p(0)\ra\\
&\qquad\qquad\qquad \qquad\qquad\qquad \qquad+ \la q(0),\tfrac{A^{2}}{\kappa^2+A}q(0)\ra + \la q,\tfrac{A^{\ell+1}}{\kappa^2+A}q(0)\ra\Bigr].\notag
\end{align}
Using
\begin{equation}\label{eq:comparison}
P_{\geq \kappa} \lesssim \tfrac{A}{\kappa^2 + A} \lesssim \tfrac1\kappa \,I + P_{\geq \sqrt \kappa}
\end{equation}
together with the boundedness and equicontinuity of $\Omega$ in $\H^{\ell+1}$, we conclude that $\Omega_T$ is equicontinuous in $\H^{\ell+1}$.

For $\ell+1<s<\ell+2$, we integrate \eqref{613} against $\kappa^{2s-2\ell-3}$ and employ
\begin{align*}
\int_{k_0}^{\infty} \frac{\kappa^{2s-2\ell-3}}{\kappa^2 + \lambda^2} \,d\kappa\sim_{s,\ell} (\kappa_0^2+\lambda^2)^{s-(\ell+2)}.
\end{align*}
As in the case $1<s<2$ treated above, \eqref{Hs bdd} then follows from taking $\kappa_0=0$ and invoking \eqref{H1 bdd'}. The equicontinuity of $\Omega_T$ in $\H^s$ follows from \eqref{eq:comparison}, the boundedness and equicontinuity of $\Omega$ in $\H^s$, and \eqref{H1 bdd'}.
\end{proof}

\subsection{The strongly confined model}\label{SS:4.1}

This subsection is devoted to proving the following theorem, which constitutes our analogue of Theorems~\ref{T:equicontinuity_H1} and \ref{T:equicontinuity_Hs} in the strongly confined case. We restrict attention to $1\leq s<2$, which includes the regularities $1\leq s\leq \frac32$ not covered by the existing literature.

\begin{theorem}[$\H^s$ bounds and equicontinuity for \eqref{KP-}]\label{T:confined} For each $1\leq s<2$, we have the following bound for $\H^\infty$ solutions to \eqref{KP-}:
\begin{equation}\label{Hs bdd confined}
\bigl\|(q(t),p(t))\bigr\|_{\H^s} \lesssim_s \bigl[1+2H_-(q_0,p_0)\bigr]\|(q_0,p_0)\|_{\H^s}
\end{equation}
for all times of existence.

Moreover, assume that $\Omega \subset \H^\infty$ is bounded and equicontinuous in $\H^s$ and
\begin{align}\label{bdd energy}
\sup_{(q,p)\in \Omega}\, H_-(q,p)<\infty. 
\end{align}
Then the set of orbits
$$
\Omega^*:= \bigl\{ (q(t),p(t))\,|\, (q,p)\text{ is an $\H^\infty$ solution to \eqref{KP-} with $(q(0), p(0))\in \Omega$}\bigr\}
$$
is also bounded and equicontinuous in $\H^s$.
\end{theorem}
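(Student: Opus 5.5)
The plan is to reuse the proof of Theorems~\ref{T:equicontinuity_H1} and~\ref{T:equicontinuity_Hs}, applied now to the conserved difference $I_-(0;q,p)-I_-(z;q,p)$. The new feature is the sign in front of the bracket, so I expect the \textbf{main step} to be recovering coercivity, which should come from the constraint $\la q,Aq\ra<1$. Concretely,
\begin{align*}
I_-(0)-I_-(z) = \la p,\tfrac{zA}{1+zA}p\ra + c_-(q)\la q,\tfrac{zA^2}{1+zA}q\ra - \Bigl[\la q,\tfrac{A}{1+zA}q\ra\la p,\tfrac{zA}{1+zA}p\ra-\la p,\sqrt{\tfrac{zA}{1+zA}}\sqrt{\tfrac{A}{1+zA}}q\ra^2\Bigr].
\end{align*}
By Cauchy--Schwarz the bracket is nonnegative. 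It is also at most $\la q,\tfrac{A}{1+zA}q\ra\la p,\tfrac{zA}{1+zA}p\ra\le \la q,Aq\ra\la p,\tfrac{zA}{1+zA}p\ra$. This yields the lower bound
\begin{align*}
I_-(0)-I_-(z)\geq c_-(q)^{-1}\la p,\tfrac{zA}{1+zA}p\ra + c_-(q)\la q,\tfrac{zA^2}{1+zA}q\ra .
\end{align*}
Dropping the bracket gives the upper bound $\la p,\tfrac{zA}{1+zA}p\ra + c_-(q)\la q,\tfrac{zA^2}{1+zA}q\ra$.

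Next I will control $c_-$ via the Hamiltonian. Since $\la q,Aq\ra/(1-\la q,Aq\ra)\le 2H_-$, we get $1\le c_-(q)=1+\tfrac{\la q,Aq\ra}{1-\la q,Aq\ra}\le 1+2H_-(q,p)$, and $H_-$ is conserved. Write $Q_z(q,p):=\la p,\tfrac{zA}{1+zA}p\ra+\la q,\tfrac{zA^2}{1+zA}q\ra$. Combining the bounds above with conservation of $I_-(0)-I_-(z)$ then gives, uniformly in $z>0$ and for all times of existence,
\begin{equation*}
Q_z(q(t),p(t))\le \bigl[1+2H_-(q_0,p_0)\bigr]^2\, Q_z(q_0,p_0).
\end{equation*}
For $s=1$ the bound \eqref{Hs bdd confined} follows directly from $\|(q,p)\|_{\H^1}^2\le 2H_-(q,p)\le (1+2H_-)\|(q,p)\|_{\H^1}^2$, the latter because $c_-\le 1+2H_-$.

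For $1<s<2$, I set $z=\kappa^{-2}$ and integrate the displayed inequality against $\kappa^{2s-3}\,d\kappa$ over $[\kappa_0,\infty)$. I use, exactly as in the proof of Theorem~\ref{T:equicontinuity_Hs},
\begin{equation*}
\int_{\kappa_0}^\infty \tfrac{\kappa^{2s-3}}{\kappa^2+\lambda^2}\,d\kappa\approx_s(\kappa_0^2+\lambda^2)^{s-2}.
\end{equation*}
The integral converges at infinity precisely because $s<2$, and at zero (for $\kappa_0=0$) because $s>1$. This yields
\begin{align*}
\la p(t),(\kappa_0^2+A)^{s-2}Ap(t)\ra &+ \la q(t),(\kappa_0^2+A)^{s-2}A^2q(t)\ra\\
&\lesssim_s [1+2H_-]^2\bigl[\la p_0,(\kappa_0^2+A)^{s-2}Ap_0\ra+\la q_0,(\kappa_0^2+A)^{s-2}A^2q_0\ra\bigr].
\end{align*}
Taking $\kappa_0=0$ controls the homogeneous parts $\la p,A^{s-1}p\ra+\la q,A^sq\ra$. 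Adding the $s=1$ bound for the low-order parts gives \eqref{Hs bdd confined} after taking square roots, with no growth in time since no $\H^1$ input is needed beyond $H_-$.

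For equicontinuity, hypothesis \eqref{bdd energy} makes $1+2H_-$ uniformly bounded on $\Omega$, hence on $\Omega^*$ by conservation. I then insert the comparison \eqref{eq:comparison}, $P_{\geq\kappa}\lesssim\tfrac{A}{\kappa^2+A}\lesssim\tfrac1\kappa+P_{\geq\sqrt\kappa}$, into the $s=1$ inequality with $z=\kappa^{-2}$, and into the $\kappa_0$-version of the integrated inequality for $1<s<2$. Together with the $\H^s$ boundedness and equicontinuity of $\Omega$, this shows that $\sup_{\Omega^*}\|(P_{\geq\kappa}q,P_{\geq\kappa}p)\|_{\H^s}\to0$ as $\kappa\to\infty$, uniformly in time. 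Boundedness of $\Omega^*$ is \eqref{Hs bdd confined}.
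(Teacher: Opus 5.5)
Your proposal is correct and follows the paper's proof essentially step by step. Both arguments use conservation of $I_-(0)-I_-(z)$, the lower bound $c_-^{-1}\la p,\tfrac{zA}{1+zA}p\ra + c_-\la q,\tfrac{zA^2}{1+zA}q\ra$ coming from $\la q,Aq\ra<1$, the speed bound $1\le c_-\le 1+2H_-$, integration against $\kappa^{2s-3}$, and the projection comparison for equicontinuity; the only (cosmetic) difference is that you drop the nonnegative Cauchy--Schwarz bracket in the upper bound rather than estimating it as in \eqref{315}, which gives a slightly cleaner constant.
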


\begin{proof}
In view of the constraint $\la q_0, Aq_0\ra<1$, the conservation of the strongly confined Hamiltonian \eqref{Hamiltonian} already implies a priori bounds in $\H^1$:
\begin{equation}\label{bdds confined}
\|p(t)\|^2\leq 2H_-(q_0,p_0) \qtq{and }\|A^{\frac12}q(t)\|^2 \leq \frac{2H_-(q_0,p_0)}{1+2H_-(q_0,p_0)} < 1
\end{equation}
for all times of existence. In particular, the speed function satisfies
\begin{align}\label{speed confined}
1\leq c_-(q(t)) \leq 1 + 2H_-(q_0,p_0).
\end{align}

The quantity $I_-(0;q,p)- I_-(z;q,p)$ is also coercive in the strongly confined case.  Indeed, using $\la q, \frac{A}{1+zA}q\ra \leq \la q,Aq\ra$ and the bounds \eqref{bdds confined} and \eqref{speed confined}, we find
\begin{align}\label{314}
&I_-(0;q,p)- I_-(z;q,p)\notag\\
&\quad= \langle p,\tfrac{zA}{1+zA}p\rangle+c_-(q)\langle q,\tfrac{zA^2}{1+zA}q\rangle -\langle q,\tfrac{A}{1+ zA}q\rangle\langle p,\tfrac{zA}{1+ zA}p\rangle+z\langle p,\tfrac{A}{1+ zA}q\rangle^2\notag\\
&\quad\geq \tfrac{1}{c_-(q)}\langle p,\tfrac{zA}{1+zA}p\rangle+c_-(q)\langle q,\tfrac{zA^2}{1+zA}q\rangle\notag\\
&\quad \geq \tfrac{1}{1 + 2H_-(q,p)}\Bigl\|\sqrt{\tfrac{zA}{1+zA}}\,p\Bigr\|^2+\Bigl\|\sqrt{\tfrac{zA}{1+zA}}A^{\frac12}q\Bigr\|^2.
\end{align}
On the other hand, an application of Cauchy--Schwarz,  \eqref{bdds confined}, and \eqref{speed confined} gives
\begin{align}\label{315}
I_-(0;q,p)- I_-(z;q,p)&\leq \Bigl[1+ \Bigl\|\sqrt{\tfrac{A}{1+zA}}q\Bigr\|^2 \Bigr]\Bigl\|\sqrt{\tfrac{zA}{1+zA}}\,p\Bigr\|^2 + c_-(q) \Bigl\|\sqrt{\tfrac{zA}{1+zA}}A^{\frac12}q\Bigr\|^2\notag\\
&\leq 2\Bigl\|\sqrt{\tfrac{zA}{1+zA}}\,p\Bigr\|^2 +  \bigl[1+ 2H_-(q,p)\bigr] \Bigl\|\sqrt{\tfrac{zA}{1+zA}}A^{\frac12}q\Bigr\|^2.
\end{align}

Setting $z=\kappa^{-2}$ in \eqref{314} and \eqref{315} and using the conservation of $I_-(0;q,p)- I_-(z;q,p)$ under the flow \eqref{KP-}, we deduce
\begin{align}\label{316}
\tfrac{1}{1 + 2H_-(q_0,p_0)}&\Bigl\|\sqrt{\tfrac{A}{\kappa^2+A}}\,p(t)\Bigr\|^2+\Bigl\|\sqrt{\tfrac{A}{\kappa^2+A}}A^{\frac12}q(t)\Bigr\|^2\notag\\
&\leq 2\Bigl\|\sqrt{\tfrac{A}{\kappa^2+A}}\,p_0\Bigr\|^2+\bigl[1+ 2H_-(q_0,p_0)\bigr]\Bigl\|\sqrt{\tfrac{A}{\kappa^2+A}}A^{\frac12}q_0\Bigr\|^2
\end{align}
for all times of existence.  Using
\begin{align}\label{318}
P_{\geq \kappa} \lesssim \tfrac{A}{\kappa^2 + A} \lesssim \tfrac1\kappa  + P_{\geq \sqrt \kappa}
\end{align}
in \eqref{316} yields $\H^1$ equicontinuity of orbits.

It remains to prove a priori bounds and equicontinuity of orbits for regularities $1<s<2$.  Integrating \eqref{316} against $\kappa^{2s-3}$ and using 
\begin{equation*}
\int_{k_0}^{\infty} \frac{\kappa^{2s-3}}{\kappa^2 + \lambda^2} \,d\kappa\sim_{s} (\kappa_0^2+\lambda^2)^{s-2},
\end{equation*}
we obtain
\begin{align}\label{317}
\tfrac{1}{1 + 2H_-(q_0,p_0)}&\bigl\|(\kappa_0^2+A)^{\frac{s-2}2}A^\frac12p(t)\bigr\|^2+\bigl\|(\kappa_0^2+A)^{\frac{s-2}2}A q(t)\bigr\|^2\notag\\
&\lesssim_s \bigl\|(\kappa_0^2+A)^{\frac{s-2}2}A^\frac12p_0\bigr\|^2+\bigl[1+ 2H_-(q_0,p_0)\bigr]\bigl\|(\kappa_0^2+A)^{\frac{s-2}2}Aq_0\bigr\|^2.
\end{align}
Taking $\kappa_0=0$ in \eqref{317} yields \eqref{Hs bdd confined}.

Boundedness of $\Omega^*$ in $\H^s$ follows from \eqref{Hs bdd confined}, \eqref{bdd energy}, and the boundedness of $\Omega$ in $\H^s$. To obtain equicontinuity of $\Omega^*$ in $\H^s$, we additionally use \eqref{318}, \eqref{317}, and the $\H^s$ boundedness and equicontinuity of $\Omega$. This completes the proof of Theorem~\ref{T:confined}.
\end{proof}

\subsection{An alternate approach at high regularity}\label{SS:4.2}

In the case $s\geq\frac32$, $\H^s$ control on solutions provides a bound on the time derivative of the speed function. As we will see in this subsection, this enables an alternate approach both to a priori bounds and to the propagation of equicontinuity, based on the Gronwall inequality.  One price to pay, however, is that it yields an exponential dependence on the time variable, unlike \eqref{Hs bdd confined} where the bound is uniform in time or \eqref{Hs bdd} where growth is at most linear.

While the approach is equally applicable to both the unconfined and strongly confined models, we will discuss only the strongly confined case; after all, Theorems~\ref{T:equicontinuity_H1} and \ref{T:equicontinuity_Hs} already cover the unconfined model for all $s\geq 1$.  

Consider for the moment the variant 
\begin{equation}\label{650}
\tfrac{d}{dt}q  = p, \qquad \tfrac{d}{dt}p = - \mkern2mu e^{2g(t)} A q 
\end{equation}
of our basic model, where we are imaging that the wave speed $e^{g(t)}$ has been prescribed in advance.  For any bounded measurable function $\phi:\R\to\R$, direct computation shows that 
\begin{align*}
\partial_t \Bigl\{ \bigl\| &\phi(A) p(t) \bigr\|^2 + e^{2g(t)} \bigl\| \phi(A) \sqrt{A}\, q(t) \bigr\|^2\Bigr\}
	\leq 2\mkern2mu|g'(t)| e^{2g(t)}\mkern2mu\bigl\| \phi(A) \sqrt{A}\, q(t) \bigr\|^2 .
\end{align*}
By the Gronwall inequality, this immediately yields
\begin{equation}\label{652}
\begin{aligned}
\sup_{|t|\leq T} \ &\Bigl\{ \bigl\| \phi(A) p(t) \bigr\|^2 + e^{2g(t)} \bigl\| \phi(A) \sqrt{A}\, q(t) \bigr\|^2\Bigr\} \\
	&\leq \exp\biggl\{ 2 \int_{-T}^T |g'(s)| \,ds \biggr\} \, \Bigl\{ \bigl\| \phi(A) p(0) \bigr\|^2 + e^{2g(0)} \bigl\| \phi(A) \sqrt{A}\, q(0) \bigr\|^2\Bigr\} .
\end{aligned}
\end{equation}

Needless to say, the utility of \eqref{652} is predicated on an effective bound for $g'(t)$.  In the case \eqref{KP-} under discussion, $ g'(t) = 2c_-(t) \langle p,A q \rangle$; consequently, by \eqref{speed confined},
\begin{equation}\label{653}
| g'(t) |  \leq [ 1 + 2H_{-}(p,q) ] \bigl\| \bigl(q(t),p(t)\bigr) \bigr\|_{\H^{3/2}}^2.
\end{equation}

\begin{corollary}\label{C:1.2}
For every $s\geq \frac32$, every $T>0$, and every set $\Omega$ of smooth initial data that is $\H^s$-bounded
and satisfies
\begin{align}\label{energy_-}
\sup_{(q,p) \in \Omega}\, H_-(q,p)<\infty,
\end{align}
the corresponding set of orbits under the \eqref{KP-} flow
\begin{equation}\label{C:1.2a}
 \Omega_T := \bigl\{ \bigl(q(t),p(t)\bigr) \big| \bigl(q(0),p(0)\bigr)\in\Omega \text{ and } |t|\leq T \bigr\}
\end{equation}
is $\H^s$-bounded.  If\/ $\Omega$ is also $\H^s$-equicontinuous, then so too is $\Omega_T$.
\end{corollary}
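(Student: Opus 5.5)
The plan is to combine the $s=\frac32$ case of Theorem~\ref{T:confined} with the Gronwall estimate \eqref{652}. The model \eqref{KP-} is of the form \eqref{650} with $e^{g(t)} = c_-(q(t))$, where $g=\log c_-$.

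\emph{Step 1: control of $g'$.} Since $\Omega$ is $\H^s$-bounded with $s\geq\frac32$, it is $\H^{3/2}$-bounded. Theorem~\ref{T:confined} with $s=\frac32$ (that is, \eqref{Hs bdd confined}, together with \eqref{energy_-}) then yields
\[
\sup_{t}\, \|(q(t),p(t))\|_{\H^{3/2}} \leq C(\Omega)
\]
uniformly over $\Omega$ and all times of existence. Global existence of smooth solutions is taken for granted here, as in the introduction. Inserting this into \eqref{653} and using $\sup_\Omega H_-<\infty$ gives $|g'(t)|\leq C(\Omega)$. Hence
\[
\exp\Bigl\{2\int_{-T}^T|g'|\,ds\Bigr\}\leq e^{4TC(\Omega)}.
\]
Also, by \eqref{speed confined}, $1\leq e^{2g(t)}\leq [1+2H_-]^2$ is bounded above and below uniformly.

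\emph{Step 2: boundedness.} Apply \eqref{652} with $\phi(\lambda)=(1+\lambda)^{(s-1)/2}\chi_{[0,N^2)}(\lambda)$. This function is bounded for each fixed $N$, and the truncation is harmless because the solutions are smooth. Letting $N\to\infty$ by monotone convergence, and using that $e^{2g}$ is comparable to $1$, we obtain
\[
\|(q(t),p(t))\|_{\H^s}^2 \lesssim_\Omega e^{4TC(\Omega)}\,\|(q_0,p_0)\|_{\H^s}^2
\]
for $|t|\leq T$. This gives $\H^s$-boundedness of $\Omega_T$.

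\emph{Step 3: equicontinuity.} Take instead $\phi(\lambda)=(1+\lambda)^{(s-1)/2}\chi_{[\kappa^2,N^2)}(\lambda)$, and again let $N\to\infty$. Since $\phi(A)$ commutes with everything, \eqref{652} gives
\[
\|(P_{\geq\kappa}q(t),P_{\geq\kappa}p(t))\|_{\H^s}^2\lesssim_\Omega e^{4TC(\Omega)}\,\|(P_{\geq\kappa}q_0,P_{\geq\kappa}p_0)\|_{\H^s}^2 .
\]
The right-hand side is small uniformly over $\Omega$ for $\kappa$ large, by the equicontinuity of $\Omega$.

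\emph{Main obstacle.} The only substantive point is Step 1, the uniform bound on $g'$. This requires $\H^{3/2}$ control that does not depend on any equicontinuity, and it is exactly what \eqref{Hs bdd confined} provides, since that bound is uniform over bounded sets for $s<2$. One also needs to justify \eqref{652} for \emph{unbounded} $\phi$; this is handled by the truncation and monotone convergence already described.
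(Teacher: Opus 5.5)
Your proposal is correct and follows the paper's proof: the uniform $\H^{3/2}$ bound from Theorem~\ref{T:confined} controls $g'$ through \eqref{653}, and then \eqref{652}, with $e^{2g}=c_-^2$ comparable to $1$ by \eqref{speed confined}, gives an estimate independent of $\phi$ that is applied to bounded approximations of the weight $(1+A)^{(s-1)/2}$. The only difference is in the approximating family: the paper uses $\bigl(\tfrac{1+A}{\kappa^2+\eta A}\bigr)^{s-1}$ with $\eta\to0$, whereas your spectral truncations $(1+\lambda)^{(s-1)/2}\chi_{[\kappa^2,N^2)}(\lambda)$ with monotone convergence produce the projection $P_{\geq\kappa}$ directly, which is if anything the cleaner way to get equicontinuity.
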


\begin{proof}
From Theorem~\ref{T:confined} we already have uniform $\H^{3/2}$ bounds on $\Omega_T$. Combining these with \eqref{speed confined}, \eqref{652}, and \eqref{653} we obtain
\begin{equation}\label{C:1.2b}
\begin{aligned}
\sup_{(q,p)\in\Omega_T} \ \bigl\| \bigl( \phi(A) q, \phi(A) & p \bigr)\bigr\|_{\H^1}^2 \leq C(\Omega,T) 
	\sup_{(q,p)\in\Omega} \ \bigl\| \bigl( \phi(A) q, \phi(A) p \bigr)\bigr\|_{\H^1}^2
\end{aligned}
\end{equation}
for any bounded measurable function $\phi$.  Importantly, the constant $C(\Omega,T)$ is independent of $\phi$.

As operators $\phi(A)$, we now consider the family
$$
\bigl(\tfrac{1 + A}{\kappa^2+\eta A}\bigr)^{s-1} \quad\text{indexed by $\eta>0$ and $\kappa\geq 1$.}
$$
To obtain $\H^s$ bounds, we choose $\kappa=1$ and send $\eta\to0$.  To propagate equicontinuity, we first choose $\kappa$ large depending on $\Omega$ and then send $\eta\to0$.
\end{proof}

\section{Global well-posedness in $\H^s$ for $s\geq \frac32$}\label{S:5}

This section is dedicated to the proof of the following
\begin{theorem}\label{T:GWP Hs}
For all $\frac 32\leq s\leq \infty$, both models \eqref{KP_pm} are globally well-posed in $\H^s$.
\end{theorem}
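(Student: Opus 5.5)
We will construct solutions as limits of the spectrally truncated solutions from Lemma~\ref{L:GWP}(b), prove uniqueness and Lipschitz dependence in the weaker $\H^1$ topology via a Gronwall argument, and then upgrade to $\H^s$ continuity using the equicontinuity results of Section~\ref{S:4}. Fix $s\geq\frac32$ and initial data $(q_0,p_0)\in\H^s$ (with $H_-(q_0,p_0)<\infty$ in the confined case). Set $(q_{0,N},p_{0,N}):=(P_{<N}q_0,P_{<N}p_0)$ and let $(q_N,p_N)$ be the global $C_t\H^\infty$ solutions from Lemma~\ref{L:GWP}(b). The family $\{(q_{0,N},p_{0,N})\}_N$ is $\H^s$-bounded and $\H^s$-equicontinuous, and it has uniformly bounded energy because $H_-$ is continuous where $\la q,Aq\ra<1$ and $\la q_{0,N},Aq_{0,N}\ra\le\la q_0,Aq_0\ra$. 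Theorems~\ref{T:equicontinuity_Hs} and \ref{T:confined} together with Corollary~\ref{C:1.2} then show that the orbits $\{(q_N(t),p_N(t)):N,\ |t|\le T\}$ are $\H^s$-bounded and $\H^s$-equicontinuous. In particular the speeds satisfy $c_\pm(q_N)\approx 1$ uniformly, and $|\tfrac{d}{dt}c_\pm(q_N)|\lesssim \|(q_N,p_N)\|_{\H^{3/2}}^2$ is bounded uniformly in $N$ and in $|t|\le T$.

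The key estimate is a difference estimate, Lemma~\ref{L:F}. For two $\H^\infty$ solutions $(q,p),(\tilde q,\tilde p)$ with $\H^{3/2}$ norms bounded by $R$ on $[-T,T]$, we consider the modified energy
\[
E(t)=\|p-\tilde p\|^2+c^2(q)\|A^{1/2}(q-\tilde q)\|^2 .
\]
Differentiating gives three contributions. The first is $\tfrac{d}{dt}c^2$ times $\|A^{1/2}(q-\tilde q)\|^2$, which is controlled by $R$. The second is a cross term $(c^2(q)-c^2(\tilde q))\la p-\tilde p, A\tilde q\ra$. The third comes from $\tfrac{d}{dt}(q-\tilde q)=p-\tilde p$ and cancels, exactly as in \eqref{650}--\eqref{652}. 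For the cross term we use $|c^2(q)-c^2(\tilde q)|\lesssim \|A^{1/2}(q-\tilde q)\|\,\|A^{1/2}(q+\tilde q)\|$, after first placing $A^{1/2}$ on $p-\tilde p$? That fails, since $p-\tilde p$ is only controlled in $X$. Instead we bound $\la p-\tilde p,A\tilde q\ra\le \|p-\tilde p\|\,\|A\tilde q\|$, and $\|A\tilde q\|\lesssim\|(\tilde q,\tilde p)\|_{\H^2}$ is not available at $s=\frac32$. This is the main obstacle. My first attempt to resolve it is to split $\tilde q$ with $P_{<M}$: the low-frequency part costs $M^{1/2}\|(\tilde q,\tilde p)\|_{\H^{3/2}}$, while the high-frequency part is small by equicontinuity. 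A cleaner alternative is to measure the difference in $\H^{1/2}$ rather than $\H^1$, since then the cross term pairs $A^{-1/2}$-weighted quantities, i.e.\ $\la A^{-1/2}(p-\tilde p),A^{1/2}\cdot A\tilde q\ra$ becomes $\|p-\tilde p\|_{H^{-1/2}}\|A^{1/2}\tilde q\|_{H^{1/2}}$, which is exactly $\H^{3/2}$. I will therefore run Gronwall in $\H^{1/2}$ to get
\[
\|(q-\tilde q,p-\tilde p)(t)\|_{\H^{1/2}}\le C(R,T)\|(q-\tilde q,p-\tilde p)(0)\|_{\H^{1/2}},
\]
using that $c(q)-c(\tilde q)$ is controlled by $\la q-\tilde q,A(q+\tilde q)\ra\le \|q-\tilde q\|_{\H^{1/2}}\|q+\tilde q\|_{\H^{3/2}}$-type bounds. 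This is where $s\ge\frac32$ enters.

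Applying this estimate to $(q_N,p_N)$ and $(q_M,p_M)$ shows that the sequence is Cauchy in $C([-T,T];\H^{1/2})$. Combined with uniform $\H^s$-boundedness and equicontinuity, interpolation gives the Cauchy property in $C([-T,T];\H^{s})$: split each difference with $P_{<\kappa}$, bound the low-frequency part by $\kappa^{s-1/2}$ times the $\H^{1/2}$ difference, and make the high-frequency part small uniformly by equicontinuity. The limit is therefore a $C_t\H^s$ solution, with $C^1_t\H^{s-1}$ following from the equation. In the confined case it also satisfies $\la q,Aq\ra<1$, by energy conservation passed to the limit. Uniqueness follows from the $\H^{1/2}$ Gronwall estimate, once we approximate general $\H^s$ solutions or check directly that the estimate holds for $\H^{3/2}$ solutions (the computation only needs $C_t\H^{3/2}\cap C^1_t\H^{1/2}$). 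For continuous dependence, take data $u_{0,n}\to u_0$ in $\H^s$. This convergent sequence is $\H^s$-bounded and equicontinuous, so the orbits are uniformly bounded and equicontinuous in $\H^s$ on $[-T,T]$ (after passing the Section~\ref{S:4} bounds to the limit). The $\H^{1/2}$ Lipschitz estimate together with the same frequency-splitting argument then gives convergence in $C([-T,T];\H^s)$. The case $s=\infty$ follows by applying this for every finite $s$.
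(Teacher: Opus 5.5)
Your proposal is correct and follows the paper's proof. Spectrally truncated $\H^\infty$ solutions from Lemma~\ref{L:GWP} are compared using a speed-weighted $\H^{1/2}$-level difference functional, propagated by Gronwall with constants depending only on $\H^{3/2}$ norms. This is the paper's Lemma~\ref{L:F}, which uses the symmetric weight $\tfrac12[c(q)^2+c(\tilde q)^2]$ where you use $c(q)^2$, and either works. The resulting $C_t\H^{1/2}$ convergence is then upgraded to $C_t\H^s$ by frequency splitting together with the equicontinuity propagated in Section~\ref{S:4}. Your remark that the Gronwall computation only needs $C_t\H^{3/2}\cap C^1_t\H^{1/2}$ regularity is exactly what justifies the paper's brief assertion of unconditional uniqueness, since Lemma~\ref{L:F} is stated only for $\H^\infty$ solutions.
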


We will present the details in the unconfined case.  The method applies equally well in the strongly confined case; however, there are too many ultimately inconsequential changes to the details below to try to present both models in parallel.

Fix $s\geq \frac 32$ and initial data $(q_0,p_0)\in\H^s$. We will construct the solution to \eqref{KP+} with this initial data by proving convergence of a sequence of spectrally truncated approximations: For each $N\geq 1$, we define initial data
\begin{align*}
(q_{0,N}, p_{0,N}) = (P_{<N} q_0, P_{<N} p_0).
\end{align*}
By Lemma~\ref{L:GWP}, there exists a global $\H^\infty$ solution $(q_N, p_N)$ to \eqref{KP+} with this initial data. For any $T>0$,  Theorem~\ref{T:equicontinuity_Hs} yields
\begin{align}\label{221}
\sup_{N\geq 1}\,\sup_{t\in [-T,T]} \,\|(q_N(t),p_N(t))\|_{\H^s} \leq C(\|(q_0,p_0)\|_{\H^s})\la T\ra.
\end{align}

By definition,
\begin{align}\label{convg}
(q_{0,N}, p_{0,N}) \to (q_0, p_0) \qtq{in}  \H^s \qtq{as} N\to \infty.
\end{align}
Our first step is to show that $(q_N, p_N)$ forms a Cauchy sequence in a lower regularity space:

\begin{lemma}\label{L:F}
Fix $T>0$ and let $(q,p)$ and $(\tilde q, \tilde p)$ be two $C([-T,T]; \H^\infty)$ solutions to \eqref{KP+}.  Then the functional
\begin{align*}
F(q,p; \tilde q, \tilde p):=  \la p-\tilde p,  (1+A)^{-\frac12}(p-\tilde p)\ra+ \tfrac{c(q)^2+c(\tilde q)^2}2 \la q-\tilde q,  (1+A)^{-\frac12}A(q-\tilde q)\ra \end{align*}
satisfies
\begin{align}\label{F bdd}
\sup_{t\in[-T,T]}\, F(q(t),p(t); \tilde q(t), \tilde p(t))\lesssim F(q(0),p(0); \tilde q(0), \tilde p(0)),
\end{align}
where the implicit constant depends on $T$ and the $\H^{\frac32}$ norms of $(q(0),p(0))$ and $(\tilde q(0), \tilde p(0))$.
\end{lemma}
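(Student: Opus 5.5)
We propose a Gronwall argument. Write $\delta q = q-\tilde q$, $\delta p = p-\tilde p$, $c=c(q)$, $\tilde c = c(\tilde q)$, $B=(1+A)^{-1/2}$ and $\gamma=\tfrac{c^2+\tilde c^2}{2}$, so that $F=\la \delta p, B\delta p\ra + \gamma\la \delta q, BA\delta q\ra$. The plan is to show that
\begin{equation*}
\bigl|\tfrac{d}{dt}F\bigr| \leq K(t)\, F, \qquad K(t)\lesssim \|(q(t),p(t))\|_{\H^{3/2}}^2+\|(\tilde q(t),\tilde p(t))\|_{\H^{3/2}}^2 + \bigl[\ldots\bigr],
\end{equation*}
with $K$ bounded on $[-T,T]$. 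By Theorem~\ref{T:equicontinuity_Hs}, $\sup_{|t|\leq T}\|(q,p)(t)\|_{\H^{3/2}}\leq C(\|(q(0),p(0))\|_{\H^{3/2}})\la T\ra$, and likewise for the tilded solution. Gronwall then gives \eqref{F bdd}.

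To compute $\tfrac{d}{dt}F$, use $\tfrac{d}{dt}\delta q=\delta p$ and the algebraic identity from the proof of Lemma~\ref{L:GWP}:
\begin{equation*}
\tfrac{d}{dt}\delta p = -\gamma A\delta q + \tfrac{c\tilde c(c+\tilde c)}{2}\la q+\tilde q, A\delta q\ra A(q+\tilde q).
\end{equation*}
The principal term $-\gamma A\delta q$ produces $-2\gamma\la \delta p, BA\delta q\ra$ from the first part of $F$. This cancels exactly with the term $2\gamma\la\delta q, BA\delta p\ra$ from the second part, which is why the weight $\gamma$ is chosen symmetric. Two contributions remain.

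The first is $\gamma'(t)\la\delta q,BA\delta q\ra$. Since $\tfrac{d}{dt}c(q)=-2c^2\la p,Aq\ra$ and $c\leq 1$, we get $|\gamma'|\lesssim |\la p,Aq\ra|+|\la\tilde p,A\tilde q\ra|\leq \|(q,p)\|_{\H^{3/2}}^2+\|(\tilde q,\tilde p)\|_{\H^{3/2}}^2$, using $|\la p,Aq\ra|\leq \|A^{1/4}p\|\,\|A^{3/4}q\|$. This is exactly where $s=\tfrac32$ is needed. We also need $\gamma$ bounded below, so that $\la\delta q,BA\delta q\ra\leq \gamma^{-1}F$. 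This holds because $c\geq (1+\|A^{1/2}q\|^2)^{-1}$, which is controlled on $[-T,T]$ by Theorem~\ref{T:equicontinuity_H1}.

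The second is the nonlocal term
\begin{equation*}
c\tilde c(c+\tilde c)\,\la q+\tilde q,A\delta q\ra\,\la \delta p, BA(q+\tilde q)\ra.
\end{equation*}
We expect this to be the main obstacle, since both factors must be bounded in the weak norms of $F$. Split the powers as
\begin{equation*}
|\la q+\tilde q, A\delta q\ra|\leq \|A^{3/4}(q+\tilde q)\|_{H^{1/2}}\,\|B^{1/2}A^{1/2}\delta q\|,
\end{equation*}
which uses $A^{3/4}(1+A)^{1/4}$ acting on $q+\tilde q$ and is therefore controlled by $\H^{3/2}$ norms. Similarly,
\begin{equation*}
|\la\delta p, BA(q+\tilde q)\ra|\leq \|B^{1/2}\delta p\|\,\|B^{1/2}A(q+\tilde q)\|,
\end{equation*}
and $\|(1+A)^{-1/4}A(q+\tilde q)\|\lesssim\|A^{1/2}(q+\tilde q)\|_{H^{1/2}}$ is again an $\H^{3/2}$ quantity.

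Each of these two weak factors is at most $\gamma^{-1/2}F^{1/2}$ or $F^{1/2}$, so their product is $\lesssim F$. The resulting coefficient is $\|(q,p)\|_{\H^{3/2}}^2+\|(\tilde q,\tilde p)\|_{\H^{3/2}}^2$ times powers of $\gamma^{-1}$, all uniformly bounded on $[-T,T]$ by Theorems~\ref{T:equicontinuity_H1} and \ref{T:equicontinuity_Hs}. Gronwall's inequality then closes the argument. Because $\H^\infty$ regularity makes every pairing finite, differentiating $F$ in time is justified.
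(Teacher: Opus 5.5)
Your proposal is essentially the paper's proof. It uses the same computation of $\tfrac{d}{dt}F$: the principal terms cancel thanks to the symmetric weight, leaving the $\gamma'$ term and the nonlocal term from the identity in Lemma~\ref{L:GWP}. It then applies the same Cauchy--Schwarz bounds in $\H^{3/2}$ norms, and closes with Gronwall and the a priori bounds of Theorem~\ref{T:equicontinuity_Hs}.

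There is one slip to correct. The splitting should read $|\la q+\tilde q, A\delta q\ra|\leq \|(1+A)^{\frac14}A^{\frac12}(q+\tilde q)\|\,\|(1+A)^{-\frac14}A^{\frac12}\delta q\|$, so the factor is $\|A^{\frac12}(q+\tilde q)\|_{H^{1/2}}$, not $\|A^{\frac34}(q+\tilde q)\|_{H^{1/2}}$. The latter is an $\H^2$-level quantity, and the inequality as you wrote it can fail when $A$ has spectrum below $1$.
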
 

\begin{proof}
A straightforward computation yields
\begin{align*}
\tfrac{d}{dt}F(q,p; \tilde q, \tilde p)
&= -\bigl[c^2(q)-c^2(\tilde q)\bigr] \langle p-\tilde p, (1+A)^{-\frac12}A(q+ \tilde q)\rangle \\
&\quad- 2\bigl[c(q)^3\la p, Aq\ra+ c(\tilde q)^3\la \tilde p, A\tilde q\ra\bigr] \langle q-\tilde q, (1+A)^{-\frac12}A(q-\tilde q)\ra\\
&= \bigl[c(q)+ c(\tilde q)\bigr] c(q) c(\tilde q) \la q-\tilde q, A(q+\tilde q)\ra  \langle p-\tilde p, (1+A)^{-\frac12}A(q+ \tilde q)\rangle\\
&\quad- 2\bigl[c(q)^3\la p, Aq\ra+c(\tilde q)^3\la \tilde p, A\tilde q\ra\bigr] \langle q-\tilde q, (1+A)^{-\frac12}A(q-\tilde q)\ra.
\end{align*}
Thus, by the Cauchy--Schwarz inequality we get 
\begin{align*}
\Bigl|\tfrac{d}{dt}F(q,p; \tilde q, \tilde p)\Bigr|
&\lesssim \|(1+A)^{\frac14}A^{\frac12}(q+\tilde q)\| \cdot \|(1+A)^{-\frac14}A(q+\tilde q)\| F(q,p; \tilde q, \tilde p)\\
&\quad +\bigl[\|(q,p)\|_{\H^{\frac32}}^2 +\|(\tilde q,\tilde p)\|_{\H^{\frac32}}^2\bigr] F(q,p; \tilde q, \tilde p)\\
&\lesssim \bigl[\|(q,p)\|_{\H^{\frac32}}^2 +\|(\tilde q,\tilde p)\|_{ \H^{\frac32}}^2\bigr] F(q,p; \tilde q, \tilde p).
\end{align*}
Now by Theorem~\ref{T:equicontinuity_Hs}, we have
\begin{align*}
\sup_{t\in [-T,T]} \,\|(q(t),p(t))\|_{\H^{\frac32}}& \leq C\bigl(\|(q_0,p_0)\|_{\H^{\frac32}}\bigr)\la T\ra \\
\sup_{t\in [-T,T]} \,\|(\tilde q(t),\tilde p(t))\|_{\H^{\frac32}}&\leq C(\|(\tilde q_0,\tilde p_0)\|_{\H^{\frac32}}\bigr)\la T\ra,
\end{align*}
and so claim \eqref{F bdd} follows from an application of the Gronwall inequality.
\end{proof}

Returning to the proof of Theorem~\ref{T:GWP Hs}, \eqref{221}, \eqref{convg}, and Lemma~\ref{L:F} imply
\begin{align}\label{401}
\lim_{N,M\to\infty}\ \sup_{t\in[-T,T]}\, F(q_N(t),p_N(t); q_M(t), p_M(t)) =0,
\end{align}
that is, $( (1+A)^{-\frac14}A^{\frac12}q_N, (1+A)^{-\frac14}p_N)$ is Cauchy in $C([-T,T]; X\times X)$.

Next, we employ Theorem~\ref{T:equicontinuity_Hs} to upgrade the convergence of $(q_N, p_N)$ to the space $C([-T,T]; \H^s)$. By the equicontinuity of orbits, we know that for each $\eps>0$ there exists $\kappa=\kappa(\eps,T)$  large enough so that
\begin{align}\label{402}
\sup_{N\geq 1} \, \sup_{t\in [-T,T]}\, \|(P_{\geq \kappa} q_N(t), P_{\geq \kappa} p_N(t))\|_{\H^s}<\eps.
\end{align}
On the other hand, we have the bounds
\begin{align*}
\|P_{<\kappa}A^{\frac12}(q_N -q_M)(t)\|_{H^{s-1}} &\lesssim  (1+\kappa)^{s-\frac12}\|(1+A)^{-\frac14}A^{\frac12}(q_N-q_M)(t)\|, \\
\|P_{<\kappa}(p_N -p_M)(t)\|_{H^{s-1}} &\lesssim  (1+\kappa)^{s-\frac12}\|(1+A)^{-\frac14}(p_N-p_M)(t)\|,
\end{align*}
and so \eqref{401} implies
$$
\sup_{t\in [-T,T]}\, \|P_{<\kappa}(q_N -q_M)(t), P_{<\kappa}(p_N -p_M)(t)\|_{\H^s}\to 0 \qtq{as} N, M\to \infty.
$$
Together with \eqref{402}, this shows that $(q_N, p_N)$ is Cauchy in $C([-T,T]; \H^s)$.

Let $(q,p)$ denote the limit of the sequence $(q_N, p_N)$.  It is easy to see that $(q,p)$ is an $\H^s$ solution to \eqref{KP+} on $[-T,T]$.  Also, in view of \eqref{221} it satisfies
\begin{align*}
\sup_{t\in [-T,T]} \,\|(q(t),p(t))\|_{\H^s} \leq C\bigl(\|(q_0,p_0)\|_{\H^s}\bigr)\la T\ra.
\end{align*}

The unconditional uniqueness of $C_t\H^s$ solutions follows from another application of Lemma~\ref{L:F}. Continuous dependence of the solution on the initial data in $\H^s$ also follows from Lemma~\ref{L:F} and Theorem~\ref{T:equicontinuity_Hs}, as in the argument above.  This completes the proof of well-posedness for all $\frac32\leq s< \infty$.  This then implies continuous dependence in $\H^\infty$ by virtue of the metric \eqref{Hinfty}. This completes the proof of Theorem~\ref{T:GWP Hs}.

\section{Existence of $\H^1$ solutions}\label{S:6}

In this section we prove the existence of solutions to \eqref{KP_pm} for initial data in the energy space $\H^1$. To avoid repetition, we once again present the details solely for the unconfined model \eqref{KP+}.

The first step is to show that the speeds along trajectories emanating from a bounded and equicontinuous subset of $\H^1$ form a precompact set of functions in the time variable. For our purposes, it suffices to consider smooth initial data, which leads to unique global solutions in view of Theorem~\ref{T:GWP Hs}.

\begin{lemma}\label{L:c equi}
Let $\Omega \subset \H^\infty$ be bounded and equicontinuous in $\H^1$. Then for each $T>0$, the set of functions
\begin{align*}
\mathcal C:=\bigl\{ c(q(t)):t\in [-T,T]\to \R \, |\, \text{$(q,p)$ solves \eqref{KP+} with $(q(0), p(0))\in \Omega$}\bigr\}
\end{align*}
is precompact in $C([-T, T])$.
\end{lemma}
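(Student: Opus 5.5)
We will apply Arzel\`a--Ascoli. The functions $c(q(t))=(1+\la q(t),Aq(t)\ra)^{-1}$ take values in $(0,1]$, so $\mathcal C$ is uniformly bounded. The real work is uniform equicontinuity in $t$. A direct approach fails: $\frac{d}{dt}c(q)=-2c(q)^2\la p,Aq\ra$, and $\la p,Aq\ra$ is not controlled by $\H^1$ norms. We therefore split into low and high frequencies, using the equicontinuity of orbits from Theorem~\ref{T:equicontinuity_H1}.

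Fix $\eps>0$. By Theorem~\ref{T:equicontinuity_H1}, $\Omega_T$ is bounded in $\H^1$, say $\|A^{\frac12}q\|+\|p\|\le C(\Omega)\la T\ra$, and it is equicontinuous. Choose $\kappa$ so that
\[
\sup_{(q,p)\in\Omega_T}\|P_{\geq\kappa}A^{\frac12}q\|^2\le\eps .
\]
Since $P_{\geq\kappa}$ commutes with $A$, we may write $\la q,Aq\ra=\la q,P_{<\kappa}Aq\ra+\la q,P_{\geq\kappa}Aq\ra$. The second term lies in $[0,\eps]$ for all times and all orbits. For the first term, differentiating along the $\H^\infty$ solution (global and unique by Theorem~\ref{T:GWP Hs}) gives
\[
\tfrac{d}{dt}\la q,P_{<\kappa}Aq\ra=2\la p,P_{<\kappa}Aq\ra .
\]
Using $\|P_{<\kappa}A^{\frac12}\|\le\kappa$, this is bounded in absolute value by $2\kappa\|p\|\|A^{\frac12}q\|\le 2\kappa C(\Omega)^2\la T\ra$. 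So the low-frequency part is uniformly Lipschitz in $t$ on $[-T,T]$, with a constant depending only on $\kappa$, $\Omega$, and $T$.

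Putting the two pieces together, for $|t-t'|\le\delta$ we get
\[
|\la q(t),Aq(t)\ra-\la q(t'),Aq(t')\ra|\le 2\eps+2\kappa C(\Omega)^2\la T\ra\delta
\]
uniformly over the family. The map $x\mapsto(1+x)^{-1}$ is $1$-Lipschitz on $[0,\infty)$, so the same bound holds for $|c(q(t))-c(q(t'))|$. Choosing $\delta$ small then gives equicontinuity of $\mathcal C$, and Arzel\`a--Ascoli yields precompactness in $C([-T,T])$. The only real obstacle is the lack of control on $\la p,Aq\ra$ at $\H^1$ regularity. The frequency splitting handles it precisely because equicontinuity of orbits, not mere boundedness, is available from Theorem~\ref{T:equicontinuity_H1}.
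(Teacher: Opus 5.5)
Your proof is correct and follows the paper's argument essentially step for step. The paper also applies Arzel\`a--Ascoli with the uniform bound $1$, and gets equicontinuity by splitting $\la q,Aq\ra$ into a high-frequency part made uniformly small by the $\H^1$ equicontinuity of $\Omega_T$ (Theorem~\ref{T:equicontinuity_H1}) and a low-frequency part that is uniformly Lipschitz in $t$ with constant $\lesssim \kappa\, C(\Omega)\la T\ra$. Two minor remarks: your derivative $2\la p,P_{<\kappa}Aq\ra$ is the correct one, since the paper's display \eqref{702} has $P_{\geq\kappa}$ by a typo. Also, from your stated bound $\|A^{\frac12}q\|+\|p\|\le C(\Omega)\la T\ra$ the product only gives $\la T\ra^2$; this is harmless for fixed $T$, and using the separate bounds in \eqref{H1 bdd} recovers $\la T\ra$.
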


\begin{proof}
The claim will follow from an application of the Arzela--Ascoli Theorem. Clearly, the set $\mathcal C$ is uniformly bounded by the constant $1$.  It thus remains to show that it is also equicontinuous.

By Theorem~\ref{T:equicontinuity_H1}, the set of orbits $\Omega_T$ emanating from the set $\Omega$ is bounded and equicontinuous in $\H^1$.  In particular, given $\eps>0$ there exists $\kappa=\kappa(\eps, \Omega)>0$ such that
\begin{align}\label{701}
\sup_{q\in \Omega_T} \la q, P_{\geq \kappa} A q\ra <\tfrac\eps4.
\end{align}

On the other hand, for an individual orbit we may compute
\begin{align}\label{702}
\tfrac{d}{dt} \la q, P_{< \kappa} A q\ra = 2\la p, P_{\geq \kappa} A q\ra\leq 2\kappa\|p\|\|A^{\frac12}q\|\leq C(\Omega)\la T\ra  \kappa,
\end{align}
where we used \eqref{H1 bdd} in the last step.

Applying \eqref{701} and \eqref{702}, for any solution $(q(t),p(t))$ to \eqref{KP+} with $(q(0),p(0))\in \Omega$ and any times $-T\leq t_1<t_2\leq T$, we may bound
\begin{align*}
\bigl| c(q(t_2)) -c(q(t_1))\bigr|
&\leq \bigl| \la q(t_2), A q(t_2) \ra - \la q(t_1), A q(t_1) \ra\bigr| \\
&\leq 2 \sup_{q\in \Omega_T} \la q, P_{\geq \kappa} A q\ra + \int_{t_1}^{t_2} \bigl|\tfrac{d}{dt} \la q(t), P_{< \kappa} A q(t)\ra\bigr|\, dt\\
&\leq \tfrac\eps2 + C(\Omega)\la T\ra \kappa |t_2-t_1|<\eps,
\end{align*}
provided $|t_2-t_1|$ is sufficiently small depending on $\Omega$ and $T$. This proves that the set $\mathcal C$ is equicontinuous.
\end{proof}

We are now ready to construct global solutions to \eqref{KP+} with initial data in the energy space.

\begin{theorem}\label{T:H1 exist}
Let $(q_0,p_0) \in \H^{1}$. For any $T>0$, there exists a solution $(q,p) \in C([-T,T];\H^1)$ to \eqref{KP+} with initial data $(q(0),p(0))= (q_0,p_0)$.
\end{theorem}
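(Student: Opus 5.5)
We approximate the data spectrally and pass to the limit using compactness of the speeds together with linear theory for the wave equation with a prescribed time-dependent speed. Set $(q_{0,N},p_{0,N})=(P_{<N}q_0,P_{<N}p_0)$. By Theorem~\ref{T:GWP Hs}, each has a unique global $\H^\infty$ solution $(q_N,p_N)$. The family $\Omega=\{(q_{0,N},p_{0,N})\}_{N\geq1}$ converges in $\H^1$, so it is bounded and equicontinuous in $\H^1$. Theorem~\ref{T:equicontinuity_H1} then shows that the orbits $\{(q_N(t),p_N(t)):|t|\leq T\}$ are bounded and equicontinuous in $\H^1$. Lemma~\ref{L:c equi} shows that the speeds $c_N(t):=c(q_N(t))$ form a precompact subset of $C([-T,T])$. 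We pass to a subsequence along which $c_N\to \gamma$ uniformly on $[-T,T]$, for some continuous $\gamma$ with $0<\gamma\leq1$. The lower bound $\gamma>0$ holds because $\|A^{1/2}q_N(t)\|\leq C(\Omega)\la T\ra$.

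Next we solve the \emph{linear} problem $\dot q=p$, $\dot p=-\gamma(t)^2Aq$ with data $(q_0,p_0)$. This is a time-dependent rescaling of the linear wave equation, so it is well-posed in $C([-T,T];\H^1)$. For instance, one can diagonalize spectrally via the functional calculus of $A$ and solve the scalar ODE for each spectral parameter $\lambda$ with continuous coefficients. We write $(q,p)$ for this solution. We must then show that $(q_N,p_N)\to(q,p)$ in $C([-T,T];\H^1)$. The difference $(q_N-q,p_N-p)$ solves the linear equation with speed $\gamma^2$, a forcing term $(\gamma^2-c_N^2)Aq_N$, and initial data tending to zero in $\H^1$.

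The energy estimate must be carried out at a regularity where the forcing is controlled. On low frequencies $P_{<\kappa}$, the operator $A$ is bounded, so the forcing $(\gamma^2-c_N^2)P_{<\kappa}Aq_N$ is $O(\kappa\|c_N-\gamma\|_\infty)$ in $X$. A Gronwall/Duhamel argument for the linear equation, in which $\gamma$ need only be continuous because on $P_{<\kappa}X$ the system is a bounded ODE, then gives $P_{<\kappa}(q_N,p_N)\to P_{<\kappa}(q,p)$ uniformly in time in $\H^1$. The high frequencies $P_{\geq\kappa}$ are uniformly small for the $(q_N,p_N)$ by equicontinuity of orbits. For $(q,p)$ they are small by lower semicontinuity: $P_{<\kappa'}(q,p)$ is a limit for every $\kappa'$, so the tail bound passes to the limit. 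Combining the two regimes shows that $(q_N,p_N)$ converges to $(q,p)$ in $C([-T,T];\H^1)$.

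The main step, and the only genuinely nonlinear one, is to identify $\gamma(t)=c(q(t))$. This follows from the strong $\H^1$ convergence, since $\la q_N,Aq_N\ra\to\la q,Aq\ra$ uniformly in $t$. Hence $(q,p)$ solves \eqref{KP+} in the $\H^0$ sense: passing to the limit in the integral form of the equation gives $(q,p)\in C^1(\H^0)$. I expect the obstacle to be exactly this strong convergence. Without the compactness of speeds supplied by Lemma~\ref{L:c equi} and the propagated equicontinuity, one would obtain only weak limits, and $\la q,Aq\ra$ is not weakly continuous. The strategy avoids this by freezing the speed as $\gamma$ and treating the limiting equation as linear.
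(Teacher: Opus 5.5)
Your argument is essentially the paper's: the same $P_{<N}$ approximation, compactness of the speeds from Lemma~\ref{L:c equi}, and a low-frequency Gronwall estimate combined with high-frequency control from the propagated $\H^1$ equicontinuity. The only structural difference is that the paper skips the auxiliary frozen-speed problem. It shows directly that $(q_N,p_N)$ is Cauchy in $C([-T,T];\H^1)$ (Lemma~\ref{L:Cauchy}) and then identifies the limit as a solution just as you do.

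You should, however, drop the claim that $\ddot q=-\gamma^2Aq$ is a time rescaling of the wave equation and is therefore well-posed in $\H^1$. Reparametrizing time introduces a $\gamma'/\gamma$ damping term. The energy bound \eqref{652}, with $e^{g}=\gamma$, costs $\exp\bigl(2\int_{-T}^T|g'|\bigr)$, and this is unavailable when $\gamma$ is merely continuous; no frequency-uniform bound holds in general. The claim is harmless here only because your tail argument does the work. Solving on each $P_{<\kappa}X$ and passing the uniform high-frequency bounds of $(q_N,p_N)$ to the limit already gives $(q,p)\in C([-T,T];\H^1)$, with no linear $\H^1$ theory needed.
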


\begin{proof}
For each $N\geq 1$, let
\begin{align*}
(q_{0,N}, p_{0,N}) = (P_{<N} q_0, P_{<N} p_0)
\end{align*}
and let $(q_N, p_N)$ denote the global solution to \eqref{KP+} with data $(q_N(0), p_N(0))=(q_{0,N}, p_{0,N})$ guaranteed by Lemma~\ref{L:GWP}.

Invoking Lemma~\ref{L:c equi} and passing to a subsequence if necessary, we conclude that the functions $c_N=c(q_N):[-T,T]\to \R$ converge in $C([-T,T])$ to a continuous function $c:[-T,T]\to\R$.

The convergence of $(q_N,p_N)$ in $C([-T,T]; \H^1)$  is now a consequence of the following: 

\begin{lemma}\label{L:Cauchy}
Assume that the sequence of $\H^\infty$ initial data $\{(q_{0,n}, p_{0,n})\}_{n\geq 1}$ is Cauchy in $\H^1$ and that the corresponding sequence of speeds $c_n=c(q_n)$ is Cauchy in $C([-T,T])$ for some $T>0$.  Then the sequence of solutions $(q_n, p_n)$ to \eqref{KP+} with initial data $(q_n(0), p_n(0))= (q_{0,n}, p_{0,n})$ is Cauchy in $C([-T,T];\H^1)$.
\end{lemma}

\begin{proof}
By hypothesis, the set 
$$
 \Omega:=\bigl\{(q_{0,n}, p_{0,n}) \,|\, n\geq 1\bigr\} \quad \text{is bounded and equicontinuous in $\H^1$.}
$$
Consequently, by Theorem~\ref{T:equicontinuity_H1} the set of orbits
$$
\Omega_T:=\bigl\{ (q_n(t), p_n(t))\,|\, n\geq 1 \text{ and } t\in [-T,T]\bigr\}
$$
is bounded and equicontinuous in $\H^1$ with
\begin{align}\label{1131}
\sup_{(q,p)\in \Omega_T}\, \|(q,p)\|_{\H^1} \leq C(\Omega)\la T\ra.
\end{align}

As $\Omega_T$ is equicontinuous in $\H^1$, given $\eps>0$ there exists $\kappa=\kappa(\eps, \Omega)$ such that
\begin{align}\label{1132}
\sup_{(q,p)\in \Omega_T} \, \la p, P_{\geq \kappa} p\ra + \la q, P_{\geq \kappa} A q\ra <\eps.
\end{align}
For the low frequency component, we compute
\begin{align*}
\Bigl|\tfrac{d}{dt} \Bigl[&\|P_{<\kappa}(p_n-p_m)\|^2+ \|P_{<\kappa}A^{\frac12}(q_n-q_m)\|^2\Bigr]\Bigr|\\
&=2\Bigl|(1- c_n^2) \la p_n-p_m, P_{<\kappa} A (q_n-q_m)\ra + (c_m^2-c_n^2)\la p_n-p_m, P_{<\kappa}A q_m\ra\Bigr|\\
&\lesssim \kappa \|P_{<\kappa}(p_n-p_m)\| \|P_{<\kappa} A^{\frac12}(q_n-q_m)\| + |c_n-c_m| C(\Omega)\la T\ra \kappa, 
\end{align*}
where we used \eqref{1131} in the last step.  An application of the Gronwall inequality then yields
\begin{align*}
&\|P_{<\kappa}(p_n-p_m)(t))\|^2 + \|P_{<\kappa}A^{\frac12}(q_n-q_m)(t)\|^2\\
&\qquad\leq e^{C\kappa T}\Bigl[ \|P_{<\kappa}(p_{0,n}-p_{0,m})\|^2+ \|P_{<\kappa}A^{\frac12}(q_{0,n}-q_{0,m})\|^2\\
&\qquad\qquad\qquad  +C(\Omega)\la T\ra \kappa\int_0^t|c_n(\tau)-c_m(\tau)|\, d\tau \Bigr].
\end{align*}
This converges to zero as $n,m\to \infty$ uniformly for $t\in [-T,T]$; this is because $\{(q_{0,n}, p_{0,n})\}_{n\geq 1}$ is Cauchy in $\H^1$ and $c_n$ is Cauchy in $C([-T,T])$.  Combining this with \eqref{1132} to control the contribution of the high frequencies, we conclude that $(q_n, p_n)$ is Cauchy in $C([-T,T];\H^1)$.
\end{proof}

Returning to the proof of Theorem~\ref{T:H1 exist}, let $(q,p)\in C([-T,T]; \H^1)$ be the limit of $(q_N, p_N)$ guaranteed by  Lemma~\ref{L:Cauchy}. It is easy to see that this is a $C([-T,T]; \H^1)$ solution to \eqref{KP+} with data $(q(0),p(0))=(q_0,p_0)$.
\end{proof}

\section{Lessons from single-mode solutions}\label{S:7}

In this section, we restrict our attention to the unconfined case when $A$ is an unbounded operator with compact resolvent.  For example, $A$ could be the Dirichlet Laplacian on a bounded domain.  By examining solutions that live in a single eigenspace, we will be able to better understand the inherent limitations on a priori bounds for solutions to \eqref{KP+}.  In particular, we will show the essential role of both time-growth and equicontinuity of $\Omega$ in the a priori bounds \eqref{H1 bdd}.

Let $v$ be a normalized eigenvector of $\sqrt{A}$ with eigenvalue $\lambda>0$ and consider the solution to \eqref{KP+} with initial data
\begin{equation}\label{91}
q_0 =0 \qtq{and} p_0 = y_0 v
\end{equation}
where $y_0\in \R$. Evidently, such a solution can be written as
$$
q(t) = x(t) v, \quad p(t)=y(t) v,
$$
where $(x(t),y(t))$ is the solution to the system
\begin{equation}\label{92}
\dot x = y, \quad \dot y = - \tfrac{\lambda^2 x}{(1+\lambda^2 x^2)^2} \qtq{with initial data} x(0)=0, \quad y(0)=y_0.
\end{equation}

This system has Hamiltonian
$$
H = \tfrac12 y^2 + \tfrac12 - \tfrac12 \tfrac{1}{1+\lambda^2x^2}.
$$
As has long been known, such a planar system can be solved in terms of indefinite integrals and functional inversion.   There are two main cases: for $0<H < \frac12$ we have periodic orbits, while for $H>\frac12$ orbits escape ballistically to infinity. Both of these scenarios will be of interest to us.  The two exceptional situations are $H=0$, which corresponds to the zero solution, and $H=\frac12$, where $x(t)$ grows as $\sqrt t$.

We begin with the case of ballistic escape:

\begin{proposition}\label{P:93}
The solution to \eqref{92} with initial data $x_0=0$ and $y_0>1$ satisfies
\begin{align}\label{93}
t \smash[t]{\sqrt{y_0^2-1}}\leq x(t) \leq t y_0 \qtq{for all} t>0 .
\end{align}
\end{proposition}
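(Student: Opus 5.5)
The plan is to read off both bounds from conservation of the Hamiltonian, after first establishing that $x$ is increasing. With $x(0)=0$ and $y(0)=y_0$, the conserved energy is $H=\tfrac12 y_0^2$, so along the orbit
\begin{equation*}
y(t)^2 = y_0^2 - 1 + \tfrac{1}{1+\lambda^2 x(t)^2}.
\end{equation*}
Since $0<\frac{1}{1+\lambda^2x^2}\leq 1$, this gives $y_0^2-1< y(t)^2\leq y_0^2$ for all $t$. The solution is global by Lemma~\ref{L:GWP}(a), applied to the bounded operator $\lambda^2$ on the span of $v$; alternatively, the vector field in \eqref{92} is globally Lipschitz.

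The first key step is a sign argument. Since $y_0>1$, we have $y(t)^2>y_0^2-1>0$ for all $t$, so $y$ never vanishes. By continuity and $y(0)=y_0>0$, we conclude $y(t)>0$ for all $t$. Taking square roots then yields
\begin{equation*}
\sqrt{y_0^2-1}< y(t)\leq y_0 \qquad\text{for all } t.
\end{equation*}

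The second step is to integrate $\dot x = y$ from $0$ to $t>0$, using $x(0)=0$. This gives $t\sqrt{y_0^2-1}\leq x(t)\leq t y_0$, which is \eqref{93}.

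There is no serious obstacle. The only point requiring care is the positivity of $y$. This is exactly where the hypothesis $y_0>1$ (that is, $H>\frac12$) enters: it forces the lower bound $y^2>y_0^2-1$ to be strictly positive, which rules out turning points.
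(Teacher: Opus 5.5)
Your proof is correct and is essentially the paper's argument: both use conservation of $H=\frac12 y_0^2$ to get $\dot x^2 = y_0^2-1+\frac{1}{1+\lambda^2x^2}$, deduce $\dot x>0$ by continuity, and then integrate two-sided bounds on the speed. The only difference is cosmetic. The paper separates variables and bounds $dt/dx$ within $[y_0^{-1},(y_0^2-1)^{-1/2}]$, whereas you bound $\dot x$ within $(\sqrt{y_0^2-1},y_0]$ directly and integrate in $t$, which is slightly more direct.
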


\begin{proof}
With this initial data, $H=\tfrac12 y_0^2>\frac12$.  As this is conserved, we have
\begin{align}
\dot x^2  = \frac{y_0^2+(y_0^2-1)\lambda^2x^2}{1+\lambda^2x^2} 
\end{align}
for all time. Moreover, as $\dot x(0)=y_0>0$, by continuity we must have $\dot x(t)>0$ for all $t\in \R$.  Taking a square-root, we obtain a separable ODE and so deduce that 
\begin{align*}
t = \int_0^{x(t)} \sqrt{ \tfrac{1+\lambda^2u^2}{y_0^2+(y_0^2-1)\lambda^2u^2} }\;du   \qtq{for any}  t>0 .
\end{align*}
Noting that the integrand always lies in the interval $[y_0^{-1}, (y_0^2-1)^{-1/2}]$, we easily deduce \eqref{93}.
\end{proof}

Our first, rather trivial, application of Proposition~\ref{P:93} is to see that the time dependence in \eqref{H1 bdd} is sharp:

\begin{corollary}\label{C:93a}
If $\sqrt A$ has an eigenvalue $\lambda >0$, then there is an $\H^\infty$ solution $(q,p)$ to \eqref{KP+} that satisfies
\begin{align*}
\bigl\| \sqrt{A}\,q(t) \bigr\| \approx \la t\ra \qtq{uniformly for} t>0.
\end{align*}
\end{corollary}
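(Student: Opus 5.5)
We take the single-mode solution from \eqref{91} with some fixed $y_0>1$, say $y_0=2$, and a normalized eigenvector $v$ of $\sqrt A$ with eigenvalue $\lambda>0$. The first task is to check that $(q(t),p(t))=(x(t)v,y(t)v)$ is an $\H^\infty$ solution of \eqref{KP+}. Since $v$ is an eigenvector, $A^k v=\lambda^{2k}v$ for every $k$, so $v\in H^k$ for all $k$. Moreover, $\la q,Aq\ra=\lambda^2x^2$. Hence \eqref{KP+} reduces exactly to the planar system \eqref{92}. That system has a smooth, globally Lipschitz vector field, since $|\lambda^2x/(1+\lambda^2x^2)^2|$ has bounded derivative. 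So $(x,y)$ is a global $C^1$ (indeed smooth) solution, and therefore $(q,p)\in C(\R;\H^s)\cap C^1(\R;\H^{s-1})$ for every $s$.

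Next we compute $\|\sqrt A\, q(t)\|=\lambda|x(t)|$. By Proposition~\ref{P:93}, for $t>0$ we have
\[
\lambda\sqrt{y_0^2-1}\,t\leq \|\sqrt A\,q(t)\|\leq \lambda y_0\, t .
\]
This gives the comparison $\approx t$.

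The only remaining point is the small-time regime, since $\la t\ra\approx 1$ there while $x(t)\to0$ as $t\to 0$. The upper bound $\lesssim\la t\ra$ is immediate. The lower bound, however, fails near $t=0$ for this solution, so we adjust the data. We replace it by $q_0=v$, $p_0=y_0 v$ with $y_0$ large, or equivalently time-translate the solution: set $\tilde q(t)=q(t+1)$, $\tilde p(t)=p(t+1)$. The system is autonomous, so this is still an $\H^\infty$ solution. Proposition~\ref{P:93} then gives, for $t>0$,
\[
\lambda\sqrt{y_0^2-1}\,(t+1)\leq\|\sqrt A\,\tilde q(t)\|\leq \lambda y_0 (t+1),
\]
and $t+1\approx\la t\ra$ uniformly for $t>0$.

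None of these steps poses a real obstacle; the only subtlety is this small-$t$ normalization, which the time translation handles.
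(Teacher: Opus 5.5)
Your proposal is correct and matches the paper's proof: the paper also takes the single-mode ballistic solution from \eqref{91} (with $y_0=\sqrt2$), shifts time by $1$, and reads off $\|\sqrt A\,q(t)\|=\lambda x(t+1)\approx\la t\ra$ from Proposition~\ref{P:93}. Your check that the single-mode trajectory is a genuine $\H^\infty$ solution fills in a detail the paper leaves implicit. Your aside that the data $q_0=v$, $p_0=y_0v$ is ``equivalent'' to time translation is not literally true, and Proposition~\ref{P:93} would not apply to it directly, but you never use that route, so this is harmless.
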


\begin{proof}
Choose $y_0=\sqrt 2$ and let $(\tilde q,\tilde p)$ denote the solution with initial data \eqref{91}.  Now consider the solution $(q(t),p(t))=(\tilde q(t+1),\tilde p(t+1))$.  From Proposition~\ref{P:93}, we have
\begin{align*}
\bigl\| \sqrt A \, q(t) \bigr\|^2 &= \lambda^2 \tilde x(t+1)^2 \approx 1+t^2 \qtq{uniformly for} t>0. \qedhere
\end{align*}
\end{proof}

Our main reason for proving Proposition~\ref{P:93} is to demonstrate that bounds of the form \eqref{H1 bdd} can fail to hold if the assumption of equicontinuity on the initial data is relaxed to mere boundedness.

\begin{corollary}\label{C:93b}
If $A$ is unbounded with compact resolvent, then there is a sequence of $\H^\infty$ solutions $(q_n,p_n)$ to \eqref{KP+} and a sequence of times $t_n\to 0$ that satisfy 
\begin{align}\label{E:93b}
\bigl\| \bigl(q_n(0), p_n(0)\bigr) \bigr\|_{\H^1}^2 = 2 \qtq{and}
\lim_{n\to \infty}\, \bigl\| \bigl(q_n(t_n), p_n(t_n)\bigr) \bigr\|_{\H^1}^2= \infty .
\end{align}
\end{corollary}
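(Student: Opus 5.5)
Use the single-mode solutions of Proposition~\ref{P:93} along a sequence of eigenvalues tending to infinity. Since $A$ is unbounded with compact resolvent, $\sqrt A$ has normalized eigenvectors $v_n$ with eigenvalues $\lambda_n\to\infty$. For each $n$, take initial data \eqref{91} with $v=v_n$ and $y_0=\sqrt2$, that is, $q_n(0)=0$ and $p_n(0)=\sqrt2\,v_n$. Then $\|(q_n(0),p_n(0))\|_{\H^1}^2=\|p_n(0)\|^2=2$, which gives the first part of \eqref{E:93b}. These are $\H^\infty$ solutions because they live in a single eigenspace: the ODE \eqref{92} has globally Lipschitz right-hand side, so its solution exists for all time, and $(x(t)v_n,y(t)v_n)$ then solves \eqref{KP+}. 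Alternatively, this follows from Lemma~\ref{L:GWP}(b).

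Along these solutions, $\|(q_n(t),p_n(t))\|_{\H^1}^2=\lambda_n^2x_n(t)^2+y_n(t)^2\ge\lambda_n^2x_n(t)^2$. Proposition~\ref{P:93} with $y_0=\sqrt2$ gives $x_n(t)\ge t$ for all $t>0$, uniformly in $\lambda_n$, so $\|\sqrt A\,q_n(t)\|^2\ge\lambda_n^2t^2$. Now choose times $t_n\to0$ with $\lambda_nt_n\to\infty$, for example $t_n=\lambda_n^{-1/2}$. Then $\|(q_n(t_n),p_n(t_n))\|_{\H^1}^2\ge\lambda_n\to\infty$, which proves the second part of \eqref{E:93b}.

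There is no serious obstacle here. The only point that needs care is that the lower bound in \eqref{93} is independent of $\lambda$; this holds because the integrand bound $(y_0^2-1)^{-1/2}$ does not involve $\lambda$. The bound also shows that the initial data set $\{(0,\sqrt2 v_n)\}$ is bounded but not equicontinuous in $\H^1$, so the example does not contradict Theorem~\ref{T:equicontinuity_H1}.
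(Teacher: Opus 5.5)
Your proposal is correct and matches the paper's proof essentially verbatim. You use the same single-mode data $q_n(0)=0$, $p_n(0)=\sqrt2\,v_n$ along eigenvalues $\lambda_n\to\infty$, the same times $t_n=\lambda_n^{-1/2}$, and the same $\lambda$-independent lower bound from \eqref{93}, which gives $\|\sqrt A\,q_n(t_n)\|^2\geq\lambda_n\to\infty$. Your extra justification that these are $\H^\infty$ solutions (global Lipschitz bound for \eqref{92}, or Lemma~\ref{L:GWP}(b)) is a harmless addition that the paper leaves implicit.
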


\begin{proof}
Under our hypotheses, there is a sequence of eigenvalues $\lambda_n\to\infty$.  Let $v_n$ denote a corresponding sequence of normalized eigenvectors.  We choose $t_n=\lambda_n^{-1/2}$ and $(q_n,p_n)$ to be the solution with initial data $q_n(0)=0$ and $p_n(0)=\sqrt2\,v_n$. 

The first claim in \eqref{E:93b} is clear.  For the second, we employ \eqref{93}:
\begin{align}
\bigl\| \sqrt A \, q_n(t_n) \bigr\|^2 &= \lambda_n^2 x_n(t_n)^2 \geq  \lambda_n^2 t_n^2 = \lambda_n \to \infty \qtq{as} n\to \infty. \qedhere
\end{align}
\end{proof}

Let us now turn our attention to periodic solutions to \eqref{92}. Our reason for analyzing this case is to show that the data-to-solution map is not $\H^1$-uniformly continuous on bounded sets.

\begin{proposition}\label{P:94}
The solution to \eqref{92} with initial data $x_0=0$ and $0<y_0<1$ is periodic and the period $P(\lambda,y_0)$ satisfies
\begin{align}\label{94}
\tfrac{2\pi}{\lambda\sqrt{1-y_0^2}} \leq P(\lambda,y_0) &\leq \tfrac{2\pi}{\lambda(1-y_0^2)}
	\qtq{and} \tfrac{d}{dy_0} P(\lambda,y_0) \geq \tfrac{2\pi y_0}{\lambda(1-y_0^2)^{3/2}}.
\end{align}
\end{proposition}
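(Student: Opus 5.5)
The plan is to reduce everything to an explicit integral formula for the period, using conservation of the Hamiltonian of \eqref{92}, and then read off all three bounds from that formula.

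\emph{Periodicity.} With $x_0=0$ we have $H=\tfrac12 y_0^2<\tfrac12$. Conservation of $H$ gives
\begin{equation*}
y^2 = y_0^2 - \tfrac{\lambda^2x^2}{1+\lambda^2x^2} = \tfrac{y_0^2-(1-y_0^2)\lambda^2x^2}{1+\lambda^2x^2}.
\end{equation*}
Hence $|x(t)|\leq X$, where $\lambda^2X^2 = y_0^2/(1-y_0^2)$. The level set $\{H=\tfrac12 y_0^2\}$ is a compact closed curve that contains no equilibria, since the only equilibrium is the origin, where $H=0$. Therefore the orbit is periodic. The vector field is invariant under $x\mapsto -x$ and under reversal $(t,y)\mapsto(-t,-y)$, so the period is four times the time taken to travel from $x=0$ to the turning point $x=X$.

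\emph{Period formula.} On that quarter orbit $\dot x>0$, so separation of variables together with the substitution $u=\lambda x$ gives
\begin{equation*}
P(\lambda,y_0) = \tfrac{4}{\lambda}\int_0^{U} \sqrt{\tfrac{1+u^2}{y_0^2-(1-y_0^2)u^2}}\,du, \qquad U=\tfrac{y_0}{\sqrt{1-y_0^2}}.
\end{equation*}
The integrand has an endpoint singularity at $u=U$, which I remove with the trigonometric substitution $u=U\sin\theta$. This turns $\sqrt{y_0^2-(1-y_0^2)u^2}$ into $y_0\cos\theta$ and $du$ into $U\cos\theta\,d\theta$. Using $(1-y_0^2)+y_0^2\sin^2\theta = 1-y_0^2\cos^2\theta$, the formula becomes the two equivalent expressions
\begin{equation*}
P = \tfrac{4}{\lambda(1-y_0^2)}\int_0^{\pi/2}\sqrt{1-y_0^2\cos^2\theta}\,d\theta
  = \tfrac{4}{\lambda\sqrt{1-y_0^2}}\int_0^{\pi/2}\sqrt{1+\tfrac{y_0^2}{1-y_0^2}\sin^2\theta}\,d\theta .
\end{equation*}
Getting this substitution and the algebra right is the only real work in the proof; everything else is a direct consequence.

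\emph{The bounds in \eqref{94}.} In the first expression the integrand lies between $\sqrt{1-y_0^2}$ and $1$. Integrating over $[0,\pi/2]$ gives both inequalities on $P$ in \eqref{94}.

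For the derivative, use the second expression. It is a product of two factors, each positive and increasing in $y_0$. So, by the product rule, $\tfrac{d}{dy_0}P$ is at least the derivative of the prefactor times the integral. The derivative of the prefactor is $\tfrac{4y_0}{\lambda(1-y_0^2)^{3/2}}$, and the integral is at least $\pi/2$. This yields
\begin{equation*}
\tfrac{d}{dy_0}P(\lambda,y_0) \geq \tfrac{2\pi y_0}{\lambda(1-y_0^2)^{3/2}}.
\end{equation*}
Differentiating under the integral sign is justified because the integrand is smooth in $y_0\in(0,1)$, uniformly in $\theta$.
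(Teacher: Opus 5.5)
Your proof is correct and is essentially the paper's argument. The paper writes the half-period as $\tfrac{1}{\lambda\sqrt{1-y_0^2}}\int_{-1}^{1}\sqrt{(1+\lambda^2a^2w^2)/(1-w^2)}\,dw$ via $u=aw$, which becomes your second expression under $w=\sin\theta$. It then uses the same pointwise bounds on the integrand and the same product-rule argument: prefactor derivative times $U(a)\geq\pi$, plus monotonicity of $U$.

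One small slip: the relevant symmetry is $(x,y)\mapsto(-x,-y)$ (or $(t,x)\mapsto(-t,-x)$), not $x\mapsto -x$ alone. The factor $4$ still follows, since $|\dot x|$ is an even function of $x$ along the orbit.
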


\begin{proof}
Proceeding as in the proof of Proposition~\ref{P:93}, by conservation of energy, we find that $x(t)$ oscillates between $a$ and $-a$, where
\begin{align*}
a = \tfrac{y_0}{\lambda \sqrt{1-y_0^2}}.
\end{align*}
Moreover, the time to pass from one to the other is
\begin{align}\label{94a}
\tfrac12 P(\lambda,y_0) = \int_{-a}^a \sqrt{ \tfrac{1+\lambda^2u^2}{y_0^2-(1-y_0^2)\lambda^2u^2} }\;du
	= \tfrac{1}{\lambda\sqrt{1-y_0^2}} \int_{-1}^1 \sqrt{ \tfrac{1+\lambda^2a^2w^2}{1-w^2} }\;dw.
\end{align}
The second identity here corresponds to the change of variables $u=aw$.

While \eqref{94a} allows us to express the period in terms of the complete elliptic integral of the second kind, we only need elementary observations
about the function
\begin{align}\label{94b}
U:a\mapsto \int_{-1}^1 \sqrt{ \tfrac{1+\lambda^2a^2w^2}{1-w^2} }\;dw
\end{align}
in order to prove \eqref{94}.  Concretely, it is an increasing function of $a\in[0,\infty)$ and so also of $y_0\in[0,1)$; moreover,
\begin{align*}
\pi = \int_{-1}^1 \tfrac{dw}{\sqrt{1-w^2}}\leq U(a) &\leq \int_{-1}^1 \tfrac{\sqrt{1+a^2\lambda^2}\;dw}{\sqrt{1-w^2}} = \tfrac{\pi}{\sqrt{1-y_0^2}}. 
\end{align*}
The claims in \eqref{94} follow from these observations and \eqref{94a}.
\end{proof}

\begin{corollary}\label{C:95}
If $A$ is unbounded with compact resolvent, then there exist sequences of $\H^\infty$ solutions $(q_n,p_n)$ and $(\tilde q_n,\tilde  p_n)$ to \eqref{KP+}, as well as a sequence of times $t_n\to 0$ that satisfy
\begin{align}\label{95a}
\bigl\| \bigl(q_n(0), p_n(0)\bigr) \bigr\|_{\H^1} &= \tfrac12, \\
\label{95b}
\lim_{n\to \infty}\,\bigl\| \bigl(q_n(0)-\tilde q_n(0),\, p_n(0)-\tilde p_n(0)\bigr) \bigr\|_{\H^1}  &= 0, \\
\label{95c}
\lim_{n\to \infty}\,\bigl\| \bigl(q_n(t_n)-\tilde q_n(t_n),\, p_n(t_n)-\tilde p_n(t_n)\bigr) \bigr\|_{\H^1} &= 1.
\end{align}
\end{corollary}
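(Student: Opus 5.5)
We will use single-mode periodic solutions from Proposition~\ref{P:94}, exploiting that the period depends on the amplitude, so that two nearby amplitudes dephase after a time of order (frequency)$^{-1}$ times (amplitude difference)$^{-1}$. Let $\lambda_n\to\infty$ be eigenvalues of $\sqrt A$ with normalized eigenvectors $v_n$. We take $q_n(0)=\tilde q_n(0)=0$, $p_n(0)=\tfrac12 v_n$ and $\tilde p_n(0)=(\tfrac12+\delta_n)v_n$ with $\delta_n\to0$ to be chosen. Then \eqref{95a} holds because $\|(0,\tfrac12 v_n)\|_{\H^1}=\tfrac12$, and \eqref{95b} holds because the difference has norm $\delta_n$. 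Writing $x_n,\tilde x_n$ for the scalar solutions of \eqref{92} with $\lambda=\lambda_n$, the $\H^1$ distance at time $t$ is $\bigl(\lambda_n^2(x_n-\tilde x_n)^2+(y_n-\tilde y_n)^2\bigr)^{1/2}$.

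The key is to pick $t_n$ equal to a number $K_n$ of full periods $P_n:=P(\lambda_n,\tfrac12)$. Then $(x_n,y_n)(t_n)=(0,\tfrac12)$, while $\tilde x_n$ has drifted in phase by about $K_n(\tilde P_n-P_n)$. By the scaling visible in \eqref{94a}, $P(\lambda,y_0)=\lambda^{-1}P(1,y_0)$. So $\tilde P_n-P_n\approx \lambda_n^{-1}\,\partial_{y_0}P(1,\tfrac12)\,\delta_n$, where $\partial_{y_0}P(1,\tfrac12)>0$ by \eqref{94}. We choose $K_n$ and $\delta_n$ so that $K_n(\tilde P_n-P_n)=\tfrac12\tilde P_n$ exactly, i.e.\ a half-period shift. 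This can be arranged by fixing $K_n$ and then solving for $\delta_n$ via the intermediate value theorem, using continuity and monotonicity of $P$ in $y_0$. Concretely $K_n\delta_n\approx c$, so we may take $K_n=\lfloor\lambda_n^{1/2}\rfloor$. This gives $\delta_n\approx c\lambda_n^{-1/2}\to0$ and $t_n=K_nP_n\approx\lambda_n^{-1/2}\to0$.

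At time $t_n$, the solution $\tilde x_n$ is at $\tilde x_n(\tilde K\tilde P_n+\tfrac12\tilde P_n)$, that is, half a period after a zero crossing. Since $\tilde x_n$ is odd in time about a zero crossing and the half-period map sends $(0,y)\mapsto(0,-y)$, we get $(\tilde x_n,\tilde y_n)(t_n)=(0,-(\tfrac12+\delta_n))$. The precise statement is $t_n=(K_n-\tfrac12)\tilde P_n$. Hence the difference at $t_n$ is $(0,\,y_n-\tilde y_n)=(0,1+\delta_n)$, which has $\H^1$ norm $1+\delta_n\to1$, giving \eqref{95c}.

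The main obstacle is making the phase-matching exact rather than approximate. We handle it by the exact equation $K_nP(\lambda_n,\tfrac12)=(K_n-\tfrac12)P(\lambda_n,\tfrac12+\delta)$ in the unknown $\delta$. Since the right side is continuous and strictly increasing in $\delta$ by \eqref{94}, and is smaller than the left side at $\delta=0$, it suffices to check that it exceeds the left side for some small $\delta$. The derivative lower bound in \eqref{94} gives $P(\lambda_n,\tfrac12+\delta)-P(\lambda_n,\tfrac12)\gtrsim\lambda_n^{-1}\delta$, while the required excess is $\tfrac{1}{2K_n}P_n\lesssim\lambda_n^{-1}K_n^{-1}$. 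Thus a root exists with $\delta_n\lesssim K_n^{-1}\to0$, and staying within $y_0<1$ is automatic. The solutions are $\H^\infty$ because they lie in a single eigenspace.
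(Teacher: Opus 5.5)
Your proposal is correct and follows the paper's argument: single-mode periodic solutions with amplitudes $\tfrac12$ and $\tfrac12+\delta_n$, with $t_n$ tuned so that the two end up exactly half a period out of phase, and with $\delta_n\to0$ obtained from the continuity, monotonicity and derivative lower bound in \eqref{94}. The only differences are cosmetic: you let the reference solution complete $K_n$ full periods and the perturbed one $K_n-\tfrac12$ (the paper uses $n+\tfrac12$ versus $n$), and you take $K_n=\lfloor\lambda_n^{1/2}\rfloor$ instead of selecting eigenvalues with $\lambda_n/n\to\infty$.
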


\begin{proof}
Under our hypotheses, we may choose a sequence of eigenvalues $\lambda_n$ so that $\lambda_n/n\to\infty$.  Let $v_n$ denote a corresponding sequence of normalized eigenvectors.  We choose $(q_n,p_n)$ to be the solution with initial data $q_n(0)=0$ and $p_n(0)=\frac12 v_n$.  We then choose the sequence of (positive) times
\begin{equation*}
t_n := (n+\tfrac12) P(\lambda_n, \tfrac12) \leq (n+\tfrac12) \tfrac{8\pi}{3\lambda_n} \to 0 \quad\text{as $n\to\infty$.}
\end{equation*}
By choosing this multiple of the period, we find that $q_n(t_n)=0$ and $p_n(t_n)=-\frac12 v_n$.

Next we choose $\tilde y_n$ so that
$$
t_n = n P(\lambda_n, \tilde y_n) \qtq{or equivalently,}  P(\lambda_n, \tilde y_n) - P(\lambda_n, \tfrac12) = \tfrac{1}{2n} P(\lambda_n, \tfrac12).
$$
There is a unique such $\tilde y_n$ due to the fact that $y_0\mapsto P(\lambda_n, y_0)$ is an increasing, continuous, unbounded function.  Indeed, \eqref{94} guarantees that $\tilde y_n > \tfrac12$ and
\begin{equation*}
| \tilde y_n - \tfrac 12 | \leq  \tfrac{1}{2n} P(\lambda_n, \tfrac12) \tfrac{\lambda_n 3^{3/2}}{8\pi} \leq \tfrac{\sqrt{3}}{2n} .
\end{equation*} 

Now consider the sequence of solutions $(\tilde q_n,\tilde  p_n)$ with initial data $\tilde q_n(0)=0$ and $\tilde  p_n(0)=\tilde y_n v_n$.  Clearly,
\begin{equation*}
\bigl\| \bigl(q_n(0)-\tilde q_n(0),\, p_n(0)-\tilde p_n(0)\bigr) \bigr\|_{\H^1} = | \tilde y_n - \tfrac 12 | \to 0 \qtq{as} n\to \infty.
\end{equation*}

Recalling the relations between $t_n$ and the periods of our solutions, we find
\begin{equation*}
\bigl\| \bigl(q_n(t_n)-\tilde q_n(t_n),\, p_n(t_n)-\tilde p_n(t_n)\bigr) \bigr\|_{\H^1} = | \tilde y_n + \tfrac 12 | \to 1 \qtq{as} n\to \infty,
\end{equation*}
which completes the proof.
\end{proof}


\end{document}